\theoremstyle{plain}
\newtheorem{theorem}{Theorem}[section]
\newtheorem{coro}[theorem]{Corollary}
\newtheorem{lem}[theorem]{Lemma}
\newtheorem{conj}[theorem]{Conjecture}
\newtheorem{problem}[theorem]{Problem}
\theoremstyle{definition}
\newtheorem{example}{Example}[section]
\newtheorem{definition}[theorem]{Definition}
\newtheorem{other}{}
\title{Some Tur\'{a}n-type results for {the} signless Laplacian spectral radius}
\author{
Jian Zheng\thanks{
School of Mathematics and Statistics, Jiangxi Normal University, Jiangxi, China. 
Email: \url{zhengj@jxnu.edu.cn}. }  
 \and 
 Yongtao Li\thanks{Corresponding author, 
 Yau Mathematical Sciences Center, Tsinghua University, Beijing, China. Email: \url{ytli0921@hnu.edu.cn}.}
\and 
Yi-Zheng Fan\thanks{
School of Mathematical Sciences, Anhui University, Hefei, China. Email: \url{fanyz@ahu.edu.cn}. 
} 
}
\date{\today}
\begin{document}
\maketitle

\vspace{-1cm}

\begin{abstract}
 Half a century ago, Bollob\'{a}s and Erd\H{o}s [Bull. London Math. Soc. 5 (1973)] proved that every $n$-vertex graph $G$ with $e(G)\ge (1- \frac{1}{k} + \varepsilon )\frac{n^2}{2}$ edges contains a blowup $K_{k+1}[t]$ with $t=\Omega_{k,\varepsilon}(\log n)$. 
A well-known theorem of Nikiforov [Combin. Probab. Comput. 18 (3) (2009)] asserts 
that if $G$ is an $n$-vertex graph with adjacency spectral radius 
$\lambda (G)\ge (1- \frac{1}{k} + \varepsilon)n$, then $G$ contains a blowup $K_{k+1}[t]$ with $t=\Omega_{k,\varepsilon}(\log n)$. This gives a spectral extension of the Bollob\'{a}s--Erd\H{o}s theorem. 
 In this paper, we systematically explore variants of Nikiforov's result in terms of the signless Laplacian spectral radius, extending the supersaturation, blowup of cliques and the stability results. 

\begin{itemize}
    \item 
We prove that if $k\ge 2,\varepsilon>0$ and $G$ is an $n$-vertex graph with $q(G)\ge (1- \frac{1}{k} + \varepsilon)2n$, then $G$ contains $\Omega_{k,\varepsilon}(n^{k+1})$ copies of $K_{k+1}$. This extends the adjacency  spectral supersaturation of Bollob\'{a}s--Nikiforov. Moreover, such a graph $G$ contains a blowup $K_{k+1}[t]$ of size $t=\Omega_{k,\varepsilon}(\log n)$. {This generalizes the result of Nikiforov as well as the result of Zheng--Li--Su.}  

    \item 
    We show that if $k\ge 3$ and $G$ is an $n$-vertex graph with $q(G)> (1- \frac{1}{k})2n$,  then 
    $G$ contains a color-critical graph $K_{k}^+[t]$ with 
    $t=\Omega_k(\log n)$, a joint of size $js_{k+1}(G)=\Omega_k (n^{k-1})$ and 
    a generalized book $B_{k,t}$ with size $t=\Omega_k(n)$. 
    Our result can be interpreted as a complement to the result of Nikiforov and the result of Li--Liu--Zhang.  
    
    \item 
    We prove the corresponding stability result for the signless Laplacian spectral radius: Let $F$ be a graph with chromatic number $\chi (F)=k+1\ge 4$. For every $\varepsilon >0$, 
    there exist $n_0$ and $\sigma >0$ such that if $G$ is an $F$-free graph on $n\ge n_0$ vertices with $q(G)\ge (1- \frac{1}{k} - \sigma)2n$, then $G$ can be obtained from $T_{n,k}$ by adding and deleting at most $\varepsilon n^2$ edges.  
\end{itemize} 
Using the probabilistic method, 
we show that the above bounds are optimal up to a constant factor. As applications, we first prove that if $k\ge 2$ and $G$ is an $n$-vertex graph with $m$ edges such that $\sum_{v\in V(G)}d^2(v) \ge 2(1- \frac{1}{k}+ \varepsilon )mn$, then $G$ contains a blowup $K_{k+1}[t]$ with $t=\Omega_{k,\varepsilon}(\log n)$. This generalizes the aforementioned Bollob\'{a}s--Erd\H{o}s theorem. 
{Secondly, we establish the spectral extremal results for hypergraphs by showing that for any graph $F$ with $\chi (F)=k+1$, where $k\ge r\ge 2$, and for any $\varepsilon >0$, there exists $n_0$ such that if $H$ is a linear $r$-uniform hypergraph on $n\ge n_0$ vertices with $q(H)\ge (1- \frac{1}{k} + \varepsilon )\frac{2n}{r-1}$, then $H$ contains an $r$-expansion of $F$.} 
\end{abstract}

 {\bf MSC classification}\,: 15A42, 05C35, 05C50
 
 {\bf Keywords}\,: 
 Signless Laplacian spectral radius;   
 Erd\H{o}s--Stone theorem; 
 Supersaturation; 
 Blowup; 
 Color-critical graphs; 
 Joints and books; 
 Stability; 
 Degree power.

\newpage 
%\tableofcontents % 插入目录

\section{Background}

Throughout the paper, we will use the following notation. 
Let $f(n)$ and $g(n)$ be two functions of $n$. 
We write $f(n)=\Omega (g(n))$ if 
there exist positive constants $c$ and $n_0$ such that $ f(n)\ge c g(n) >0$ for all $n \ge n_0$. 
Moreover, if the constant $c$ depends on parameters $k$ and $\varepsilon$, then we denote 
$f(n)=\Omega_{k,\varepsilon}(g(n))$. 
Similarly, we write $f(n)=O(g(n))$ if there exist positive constants $C$ and $n_0$ such that $0< f(n)\le Cg(n)$ for all $n\ge n_0$. 
In addition, we write $f(n)=o(g(n))$ if $ f(n)/g(n) \to 0$ as $n\to \infty$, which implies that $f(n)$ is much smaller than $g(n)$ for sufficiently large $n$. 
 The main conclusions of this article aim to study the asymptotic behavior of some graph parameters. 
When we use the above notation, we always admit the assumption that $n$ is a sufficiently large integer. 

 For a graph $G$, we always admit 
$|V|=n$ and $|E|=m$ if there is no confusion. 
Let $K_{k+1}$ be a complete graph on $k+1$ vertices and $K_{t,t}$ be a complete bipartite graph with each partite set of size $t$. 
For a graph $H$, we denote by $H[t]$ the $t$-blowup of $H$, which is obtained by replacing every vertex of $H$ by an independent set of size $t$ and every edge of $H$ by a copy of $K_{t,t}$. 
For example, the complete $k$-partite graph $K_{t,t,\ldots ,t}$ can be viewed as a $t$-blowup of $K_k$. 
 A graph $G$ is called {\it $F$-free} if it  does not contain a subgraph isomorphic to $F$. 
 The {\it Tur\'an number} $\mathrm{ex}(n, {F})$ is defined to be the maximum number of edges of {an $n$-vertex $F$-free graph.}  
We denote by $\mathrm{Ex}(n,{F})$ the set of all $n$-vertex $F$-free graphs with maximum number of edges.

\subsection{Classical extremal graph results}

 In 1941, Tur\'{a}n \cite{Turan41} posed the  natural question of determining 
 $\mathrm{ex}(n,K_{k+1})$ for every integer $k\ge 2$. 
The Tur\'{a}n graph $T_{n,k}$ is defined to be the complete $k$-partite graph on $n$ vertices where 
 its part sizes are as equal as possible. 
 Particularly, we have $T_{n,2}=K_{\lfloor n/2\rfloor, \lceil n/2\rceil}$. 
Tur\'{a}n \cite{Turan41} 
(also see \cite[p. 294]{Bollobas78}) obtained that if $G$ is an $n$-vertex $K_{k+1}$-free graph, 
then $e(G)\le e(T_{n,k})$, with equality if and only if $G=T_{n,k}$. Consequently, we get 
\begin{equation}
    \label{eq-Turan}
    \mathrm{ex}(n,K_{k+1})= e(T_{n,k})=  
\left(1- \frac{1}{k} \right) 
\frac{n^2}{2} - O(1).
\end{equation}   
There are many extensions and generalizations on Tur\'{a}n's theorem.  
The problem of determining $\mathrm{ex}(n, F)$ is usually called the 
Tur\'{a}n-type extremal problem. 
 The most celebrated extension always attributes to a well-known result of Erd\H{o}s, Stone and Simonovits \cite{ES1946,ES1966}. 
 The {\it chromatic number} of a graph $G$, denoted by $\chi(G)$, is the minimum number of colors required to color the vertices of a graph $G$ in such a way that no two adjacent vertices share the same color. 

 \begin{theorem}[See \cite{ES1946,ES1966}] 
 \label{eq-ESS}
      If $F$ is a graph with chromatic number $\chi (F)=k+1 \ge 2$, then 
 \begin{equation*} 
  \mathrm{ex}(n,F) = \left( 1- \frac{1}{k}\right) 
  \frac{n^2}{2} + o(n^2).  
  \end{equation*}
 \end{theorem}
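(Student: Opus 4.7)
The plan is to prove matching asymptotic upper and lower bounds. The lower bound is immediate: since $\chi(T_{n,k}) = k < k+1 = \chi(F)$, the Tur\'{a}n graph $T_{n,k}$ cannot contain $F$ as a subgraph, so by \eqref{eq-Turan} we have $\mathrm{ex}(n,F) \ge e(T_{n,k}) = (1 - 1/k)\, n^2/2 - O(1)$. For the upper bound I would reduce the problem to finding a complete multipartite subgraph. The key observation is that every $(k+1)$-chromatic graph $F$ embeds into the balanced blowup $K_{k+1}[v]$, where $v = |V(F)|$: take a proper $(k+1)$-coloring of $F$ and map each color class into a distinct part of the blowup. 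Hence it suffices to show that any $n$-vertex graph $G$ with $e(G) \ge (1 - 1/k + \varepsilon)n^2/2$ contains a copy of $K_{k+1}[v]$ once $n$ is sufficiently large. This is precisely the content of the Bollob\'{a}s--Erd\H{o}s theorem quoted in the abstract, which guarantees a blowup $K_{k+1}[t]$ with $t = \Omega_{k,\varepsilon}(\log n)$; taking $n$ large forces $t \ge v$, finishing the proof.

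The heart of the matter is the Bollob\'{a}s--Erd\H{o}s blowup result itself. My plan to prove it from scratch would be by induction on $k$. The base case $k = 1$ follows from the K\H{o}v\'{a}ri--S\'{o}s--Tur\'{a}n theorem, which produces a $K_{t,t}$ in any graph with $\varepsilon n^2$ edges whenever $t = \Theta(\log n)$. For the inductive step, I would use the density hypothesis together with the convexity of $x \mapsto \binom{x}{k}$ to show that many $k$-cliques in $G$ share a common neighborhood of linear size, and then apply the inductive hypothesis inside such a common neighborhood to produce a $K_k[t]$; combined with an outer $k$-clique this yields $K_{k+1}[t']$ with $t'$ only a constant factor smaller than $t$.

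The main obstacle is the quantitative book-keeping: naively iterating this step $k$ times loses a constant factor at each level, so one has to organize the counting so that the blowup parameter survives all $k$ rounds and remains logarithmic in $n$. A cleaner alternative would be a supersaturation route: first prove via Tur\'{a}n's theorem that $G$ contains $\Omega_{k,\varepsilon}(n^{k+1})$ copies of $K_{k+1}$, and then apply a hypergraph K\H{o}v\'{a}ri--S\'{o}s--Tur\'{a}n estimate to the $(k+1)$-uniform hypergraph whose edges are the $(k+1)$-cliques of $G$ to extract the balanced blowup directly. Either path delivers the Erd\H{o}s--Stone--Simonovits conclusion.
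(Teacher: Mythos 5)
Your proposal is correct. Note that the paper does not actually prove Theorem~\ref{eq-ESS}; it cites it as a classical result from \cite{ES1946,ES1966}, so there is no in-paper argument to compare against directly. That said, your reduction is exactly the mechanism the paper invokes for its spectral analogues: a proper $(k+1)$-coloring embeds $F$ into $K_{k+1}[v]$ with $v=|V(F)|$, and the Bollob\'as--Erd\H{o}s blowup theorem (Lemma~\ref{lem-BE1973}) then produces a copy of $K_{k+1}[t]$ with $t=\Omega_{k,\varepsilon}(\log n)\ge v$ in any graph of edge density $1-\frac{1}{k}+\varepsilon$, hence a copy of $F$; this is precisely the observation the paper records just after Theorem~\ref{sES} when deducing Theorem~\ref{eq-q-ESS} from Theorem~\ref{lsES}. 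The lower bound via $\chi(T_{n,k})=k<\chi(F)$ is also fine. Of the two routes you sketch for the blowup theorem itself, the supersaturation path (count $\Omega_{k,\varepsilon}(n^{k+1})$ copies of $K_{k+1}$ by convexity, then apply a hypergraph K\H{o}v\'ari--S\'os--Tur\'an bound to the $(k+1)$-uniform clique hypergraph) is the cleaner; the paper explicitly endorses this route in the remark after the proof of Theorem~\ref{lsES}, citing Nikiforov~\cite{Niki2008blms}. The direct induction on $k$ also works, but as you anticipate the constants must be tracked carefully so the logarithmic blowup size survives all $k$ rounds.
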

 This provides an asymptotic estimate for the Tur\'{a}n numbers of non-bipartite graphs. 
 The Erd\H{o}s--Stone--Simonovits theorem is commonly regarded as the fundamental result of extremal graph theory; see \cite{BE1973,BES1976,BK1994,CS1981,Ish2002} and references therein.  
 However, for a bipartite graph $F$, where $k=1$, Theorem \ref{eq-ESS} only gives  $\mathrm{ex}(n,F)=o(n^2)$. 
There have been numerous attempts to find better bounds on $\mathrm{ex}(n,F)$ for various bipartite graphs $F$. We refer to 
  the comprehensive surveys \cite{FS13,Sim13}.

\subsection{Adjacency spectral radius}

The {\it adjacency matrix} of a graph $G$ is defined as $A(G)=(a_{ij})_{n\times n}$, where $a_{ij}=1$ if $ij\in E(G)$, and $a_{ij}=0$ otherwise. The {\it adjacency spectral radius} of $G$, denoted by $\lambda (G)$, is the maximum modulus of the eigenvalues of $A(G)$. 
The spectral Tur\'{a}n-type problem seeks to determine the maximum or minimum eigenvalues of graphs that avoid certain substructures. 
It is a central topic in extremal graph theory, combining spectral graph theory and Tur\'{a}n-type questions. This area bridges combinatorial extremal problems and spectral analysis, with applications in network science and discrete geometry. Correspondingly, we define 
$\mathrm{ex}_{\lambda}(n,F)$ to be the maximum spectral radius of an $F$-free graph of order $n$. This is also called the spectral Tur\'{a}n number.

Extending the classical Tur\'{a}n theorem, Wilf \cite{Wil1986} proved that if $G$ is a $K_{k+1}$-free graph on $n$ vertices, then $ \lambda (G) \le \left( 1- \frac{1}{k}\right)n$.  
Moreover, Nikiforov \cite{Niki2007laa2} and Guiduli \cite{Gui1996} independently proved further that 
$\lambda (G)\le \lambda (T_{n,k})$. 
From their results, we have  
\begin{equation} \label{eq-a-Turan}
 \mathrm{ex}_{\lambda}(n,K_{k+1}) = \lambda (T_{n,k})=
 \left( 1-\frac{1}{k} \right)n - 
 O(\frac{1}{n}). 
\end{equation}
This implies the Tur\'{a}n bound in (\ref{eq-Turan}) immediately by the fact $\lambda (G)\ge \frac{2e(G)}{n}$. More generally, 
Nikiforov \cite{ESB2009} and
Guiduli \cite{Gui1996} independently proved a spectral extension of the Erd\H{o}s--Stone--Simonovits theorem in terms of the adjacency spectral radius. 

\begin{theorem}[See \cite{Gui1996,ESB2009}]
 \label{eq-a-ESS}  
 If $F$ is a graph with chromatic number $\chi (F)=k+1 \ge 2$, then  
\begin{equation*}
    \mathrm{ex}_{\lambda}(n,F) 
    = \left(1- \frac{1}{k}\right)n + o(n). 
\end{equation*} 
\end{theorem}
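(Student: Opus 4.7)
The plan is to establish matching lower and upper bounds. For the lower bound, since $\chi(T_{n,k}) = k < k+1 = \chi(F)$, the Tur\'an graph $T_{n,k}$ contains no copy of $F$, and hence
\[
\mathrm{ex}_\lambda (n,F) \ge \lambda(T_{n,k}) = \left(1-\frac{1}{k}\right) n - O\!\left(\tfrac{1}{n}\right)
\]
by (\ref{eq-a-Turan}). This matches the claimed asymptotics from below.

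For the upper bound, I would fix $\varepsilon>0$ and show that any $n$-vertex graph $G$ with $\lambda(G)\ge (1-\tfrac1k+\varepsilon)n$ must contain $F$. Since $\chi(F)=k+1$, the graph $F$ embeds into the balanced blowup $K_{k+1}[t_0]$ with $t_0=|V(F)|$, so it suffices to prove the spectral blowup statement that $\lambda(G)\ge(1-\tfrac1k+\varepsilon)n$ forces a copy of $K_{k+1}[t_0]$ in $G$ for all sufficiently large $n$. My approach has three steps. First, reduce to a high--minimum-degree subgraph: using the Perron eigenvector $\mathbf{x}$ of $A(G)$, iteratively delete vertices with small eigenvector weight (rather than small degree), noting that the Rayleigh quotient $\lambda = \mathbf{x}^\top A(G)\mathbf{x}/\|\mathbf{x}\|^2$ forces such vertices to carry little spectral mass. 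This yields a subgraph $G'$ on $n'\ge n^{1-o(1)}$ vertices with $\delta(G')\ge (1-\tfrac1k+\tfrac\varepsilon2)n'$. Second, apply the classical Erd\H{o}s--Stone--Simonovits theorem (Theorem \ref{eq-ESS}) to $G'$ to locate a first copy of $K_{k+1}$. Third, bootstrap by the Erd\H{o}s--Simonovits supersaturation and a Kov\'ari--S\'os--Tur\'an blowup argument to amplify a single $K_{k+1}$ into the desired balanced blowup $K_{k+1}[t_0]$.

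The main obstacle is the first step. One might hope to simply convert the spectral bound to an edge-density bound via $\lambda(G)^2\le 2e(G)(n-1)/n$ and apply Theorem \ref{eq-ESS} directly, but squaring $(1-\tfrac1k+\varepsilon)$ generally does not exceed $1-\tfrac1k$ for small $\varepsilon$ and $k\ge 2$, so this naive conversion fails. Consequently the reduction must be carried out spectrally: one has to verify that after deleting a vertex $v$ with eigenvector weight $x_v < \tau$ the spectral radius of the remaining graph satisfies $\lambda(G-v) \ge \lambda(G) - O(\tau)$, and then control the total eigenvector mass removed so that the bound $\lambda \ge (1-\tfrac1k+\tfrac\varepsilon2)n'$ survives on the trimmed graph. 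Once this is in place, the resulting graph has enough minimum degree to feed the combinatorial Erd\H{o}s--Stone machinery and the remaining two steps go through by well-known arguments.

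Putting the two bounds together yields $\mathrm{ex}_\lambda (n,F) = (1-\tfrac{1}{k})n + o(n)$, proving the theorem.
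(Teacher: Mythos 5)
The statement you are proving is one the paper cites rather than proves (it is Guiduli's and Nikiforov's theorem), but the paper does give a detailed analogue of the argument for the signless Laplacian in Theorem~\ref{h1} and in the proof of Theorem~\ref{lsES}, and it explicitly notes that Theorem~\ref{sES} (blowup of size $\Omega(\log n)$) implies Theorem~\ref{eq-a-ESS} via the embedding $F\subseteq K_{k+1}[t]$. Your outline is in essence the same approach as Nikiforov's original proof and the paper's $q$-version: handle the high-minimum-degree case by passing to edge density, and otherwise iteratively delete the vertex carrying the smallest Perron weight to reach a large subgraph with both large normalized spectral radius and large minimum degree. Your diagnosis of why the naive conversion $\lambda^2\le 2m$ fails is correct.

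Two points of imprecision, neither fatal. First, your steps (2) and (3) are redundant: once you have $\delta(G')\ge(1-\tfrac1k+\tfrac\varepsilon2)n'$ you have $e(G')\ge(1-\tfrac1k+\tfrac\varepsilon2)\tfrac{n'^2}{2}$, and the Erd\H{o}s--Stone theorem already hands you $K_{k+1}[t_0]$ for any fixed $t_0$ and $n'$ large; there is no need to first locate a single $K_{k+1}$ and ``bootstrap'' it via supersaturation and K\H{o}v\'ari--S\'os--Tur\'an. Second, the lemma you flag as the crux is stated in the wrong normalization. An absolute bound of the form $\lambda(G-v)\ge\lambda(G)-O(\tau)$ is not what you need, since deleting a vertex also lowers $n$ by one; the quantity that must not decrease is the \emph{ratio} $\lambda/n$. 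Using the Rayleigh quotient with the Perron vector truncated at $v$, together with the eigenequation $\sum_{j\in N(v)}x_j=\lambda x_v$, one gets $\lambda(G-v)\ge\lambda(G)\frac{1-2x_v^2}{1-x_v^2}$, and the companion lemma (the adjacency analogue of Lemma~\ref{min}) must then certify that small $\delta$ forces $x_v^2$ small enough — roughly $x_v^2(1-x_v^2)^{-1}\le n^{-1}$ — so that $\lambda(G-v)/(n-1)\ge\lambda(G)/n$. That is exactly the bookkeeping Theorem~\ref{h1} carries out for $q$, and it is what makes the trimmed graph retain $|H|\ge cn$ with $c$ a constant (stronger than your claimed $n^{1-o(1)}$, though the weaker bound would also suffice here). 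With that correction to the normalization, your plan goes through and is the same route as the paper's.
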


 Boosted by the study of Tur\'{a}n-type problem, 
 the spectral Tur\'{a}n-type problem has rapidly emerged as a focal point of research in extremal graph theory and has yielded a lot of significant results.  
 Similar spectral extremal problems explore forbidden paths, cycles, or other configurations, linking graph eigenvalues to structural constraints. We refer the readers
 to the surveys \cite{LiLF,Niki2011}.

\subsection{Signless Laplacian spectral radius}

The {\it signless Laplacian matrix} of a graph $G$ 
is defined as $Q(G)=D(G)+A(G)$, 
where $D(G)$ and $A(G)$ are the degree diagonal matrix and the adjacency matrix of $G$, respectively. 
The largest eigenvalue of $Q(G)$ is denoted by  $q(G)$, which is called the {\it signless Laplacian spectral radius} or {\it $Q$-index} of $G$.  
We define $\mathrm{ex}_{q}(n,F)$ to be the maximum signless Laplacian spectral radius of an $n$-vertex $F$-free graph.  In 2013, de Abreu and Nikiforov \cite{AN2013} proved that if $G$ is an $n$-vertex $K_{k+1}$-free graph, then $ q(G) \le \left( 1-\frac{1}{k}\right)2n$. 
Furthermore, He, Jin, and Zhang \cite{HJZ2013} showed that such a graph $G$ satisfies $q(G)\le q(T_{n,k})$. Consequently, it follows that  
\begin{equation} \label{eq-q-Turan}
    \mathrm{ex}_q(n,K_{k+1}) = q(T_{n,k})= \left( 1- \frac{1}{k}\right)2n - O(\frac{1}{n}). 
\end{equation}
Recently, Zheng, Li and Su \cite{ZLS2025} showed the following result for a general graph. 

\begin{theorem}[See \cite{ZLS2025}] \label{eq-q-ESS} 
    If $F$ is a graph with $\chi (F)=k+1 \geq 3$, then
\begin{equation*} 
    \mathrm{ex}_{q}(n,F)=\bigg(1-\frac{1}{k}\bigg)2n + o(n). 
\end{equation*} 
\end{theorem}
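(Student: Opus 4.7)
The lower bound is immediate: since $T_{n,k}$ is $k$-chromatic it cannot contain any graph $F$ with $\chi(F)=k+1$, so $T_{n,k}$ is $F$-free and the signless Laplacian Tur\'an equality (\ref{eq-q-Turan}) gives $\mathrm{ex}_q(n,F)\ge q(T_{n,k})=(1-\tfrac{1}{k})2n-O(1/n)$, which already matches the claimed asymptotic.

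For the upper bound, I would reduce the problem to a spectral blow-up statement. Setting $t_0:=|V(F)|$, any proper $(k+1)$-coloring of $F$ shows that $F$ embeds into the blow-up $K_{k+1}[t_0]$. It therefore suffices to prove the following key claim: for every $\varepsilon>0$ there exists $n_0$ such that if $G$ is an $n$-vertex graph with $n\ge n_0$ and $q(G)\ge(1-\tfrac{1}{k}+\varepsilon)2n$, then $G$ contains $K_{k+1}[t_0]$. Once this is established, any $F$-free graph must satisfy $q(G)<(1-\tfrac{1}{k}+\varepsilon)2n$, completing the proof.

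To prove the key claim I would proceed in two stages. The first stage is a spectral supersaturation: from $q(G)\ge(1-\tfrac{1}{k}+\varepsilon)2n$ deduce that $G$ contains $\Omega_{k,\varepsilon}(n^{k+1})$ copies of $K_{k+1}$. The second stage extracts a $K_{k+1}[t_0]$ from these many cliques by a dependent-random-choice or K\H{o}v\'ari--S\'os--Tur\'an-style double-counting argument; because $t_0$ is a constant depending only on $F$, the much stronger $t=\Omega(\log n)$ conclusion promised in the first bullet of the abstract is not needed here.

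The main obstacle is the first stage. The analogous statement for the adjacency spectral radius is the classical supersaturation theorem of Bollob\'as--Nikiforov, but the signless Laplacian Perron eigenvector $x$ of $Q(G)=D+A$ is biased toward high-degree vertices, so the naive Rayleigh decomposition $q(G)=\sum_v d(v)x_v^2+2\sum_{uv\in E}x_ux_v$ combined with Nikiforov's spectral Erd\H{o}s--Stone--Simonovits theorem (Theorem \ref{eq-a-ESS}) does not directly yield the sharp constant $(1-1/k)\cdot 2n$. A natural remedy is to split the vertex set according to the magnitude of $x_v$: on the set of vertices where $x_v$ exceeds a threshold $\eta$ there are only $O(1/\eta^2)$ vertices and their contribution to $q(G)$ can be absorbed into the $o(n)$ error term, while on the complementary flat part one can essentially treat $x$ as uniform and apply the edge-counting Erd\H{o}s--Stone--Simonovits theorem (Theorem \ref{eq-ESS}) to the induced subgraph to force the required abundance of $K_{k+1}$'s. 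An alternative, and perhaps cleaner, route is to first upgrade the de Abreu--Nikiforov proof of the $K_{k+1}$-free case (\ref{eq-q-Turan}) to a robust version producing many $K_{k+1}$'s, and then lift it to general $F$-free graphs via a stability-type perturbation argument; this second route is also the natural one if one wants to subsequently recover the logarithmic-size blow-up and the stability theorem announced in the later bullets of the abstract.
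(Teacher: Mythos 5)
Your overall architecture — lower bound from $T_{n,k}$ via (\ref{eq-q-Turan}), embed $F\hookrightarrow K_{k+1}[t_0]$ with $t_0=|V(F)|$, and reduce to showing that $q(G)\ge(1-\tfrac1k+\varepsilon)2n$ forces $K_{k+1}[t_0]$ for large $n$ — matches the paper exactly; the paper derives the statement as an immediate corollary of Theorem \ref{lsES} and even records (after Theorem \ref{lsES}) that the blow-up can be obtained from the clique supersaturation Theorem \ref{thm-1-2} plus Nikiforov's clique-to-blowup lemma \cite{Niki2008blms}, which is your two-stage decomposition.

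The gap is in your proposed \emph{Route A} for Stage 1. You suggest a one-shot split of $V$ into a small set $B=\{v:x_v>\eta\}$ (so $|B|\le\eta^{-2}$) and a flat complement $S$, and you claim the contribution of $B$ to $q(G)$ ``can be absorbed into the $o(n)$ error term.'' That step fails: a single vertex $v\in B$ with $x_v=\Theta(1)$ and $d(v)=\Theta(n)$ contributes at least
\[
\sum_{j\in N(v)}(x_v+x_j)^2 \;\ge\; d(v)\,x_v^2 \;=\;\Theta(n)
\]
to the Rayleigh quotient $q(G)=\sum_{ij\in E}(x_i+x_j)^2$, which is of the same order as the quantity $(1-\tfrac1k+\varepsilon)2n$ you are trying to preserve on the flat part. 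This is exactly the phenomenon the paper highlights after Theorem \ref{lsES}: $q(K_{s,n-s})=n$ with only $O(n)$ edges, so a bounded number of high-coordinate vertices can account for all of $q(G)$ while leaving a nearly empty remainder — unlike the adjacency case, where $\lambda(G)\ge\varepsilon n$ forces positive edge density. So after removing $B$ you cannot conclude that $G[S]$ has $q$-value (or edge density) close to $(1-\tfrac1k)$; without that, Erd\H{o}s--Stone--Simonovits gives nothing. Your \emph{Route B} is too vague to assess but is not the paper's argument either.

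What the paper actually does at this step is different in kind: rather than a one-shot threshold split it runs an \emph{iterative} Perron-vector deletion (Theorem \ref{h1}). While $\delta(G_{n-i})\le(\gamma-\alpha)(n-i)$, Lemma \ref{min} bounds the smallest eigenvector coordinate $x_u$ so that $x_u^2(n-i)\le 1-\alpha$, and Lemma \ref{sdf} then guarantees the \emph{normalized} quantity $q(G_{n-i})/(n-i-1)$ does not decrease when that vertex is removed — in fact it improves by a factor $\ge 1+\tfrac{2\alpha\beta}{3(n-i-1)}$. Compounding these factors shows the process must halt after at most $(1-c)n$ deletions (else $q$ would exceed its trivial upper bound $2\Delta$), leaving a linear-sized induced subgraph $H$ with \emph{both} $q(H)>2\gamma|H|$ and $\delta(H)>(\gamma-\alpha)|H|$ simultaneously. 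The minimum-degree conclusion is what gives $e(H)\ge(1-\tfrac1k+\tfrac{\varepsilon}{2})|H|^2/2$, at which point Lemma \ref{lem-BE1973} or Lemma \ref{lem-super-cliques} finishes. This multiplicative control over many deletions — rather than a single absorption — is the key idea your sketch is missing, and it is the genuine new technical content over the adjacency-spectral case.
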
 
 Theorem \ref{eq-q-ESS} extends both Theorems \ref{eq-ESS} and \ref{eq-a-ESS} by invoking the fact $q(G)\ge 2\lambda (G) \ge \frac{4e(G)}{n}$. 
This demonstrates that the signless Laplacian spectral radius provides a unified framework that incorporates both the size and adjacency spectral radius. 
In this paper,  
we focus primarily on deriving some new Tur\'{a}n-type results involving the signless Laplacian spectral radius. 

Before our work, there are many spectral extremal graph  results that involve the signless Laplacian spectral radius for different types of subgraphs $F$. 
To complete a comprehensive review, 
we now summarize as much progress as possible in this area. 
For example, we refer the interested readers to the results for matchings \cite{Yu2008}, 
paths \cite{NY1}, odd cycles \cite{Y2014}, even cycles \cite{NY2}, 
quadrilaterals \cite{FNP2013},  
complete bipartite graphs \cite{FNP2016}, 
Hamilton cycles \cite{LN2016,LLP2018}, 
Hamilton-connected graphs \cite{ZWL2020}, 
long cycles \cite{CWZ2022}, 
non-bipartite graphs forbidden short odd cycles \cite{LMX2022,MLX2022}, 
linear forests \cite{CLZ2020}, trees \cite{CLZ2022}, 
friendship graphs \cite{ZHG2021}, 
flowers \cite{CZ}, fan \cite{WZ2023}, 
sparse spanning graphs \cite{LN2023}, 
books \cite{CJZ2025}, color-critical graphs \cite{ZLL2025} and extremal problems for graphs with given number of edges \cite{LGW2021,ZXL2020,ZXL2022}.

\section{Main results}  
\label{sec-2}

\subsection{Supersaturation and large blowup of cliques}

For two positive functions $f(n)$ and $g(n)$, we denote $f(n)= \Omega_{k,\varepsilon}(g(n))$ if there exist constants $c$ and $n_0$ depending on $k,\varepsilon$ such that $f(n)\ge cg(n)$ for all $n\ge n_0$. 

The Erd\H{o}s--Simonovits  supersaturation \cite{ES1983-super} states that for any $k\ge 1$ and $\varepsilon>0$, if $G$ is an $n$-vertex graph with $e(G)\ge (1- \frac{1}{k} + \varepsilon) \frac{n^2}{2}$, then $G$ 
contains $\Omega_{k,\varepsilon} (n^{k+1})$ copies of $K_{k+1}$. 
This result can be proved by different approaches. The first approach uses the standard double counting argument, which shows that for each $t\ge k+1$, the graph $G$ contains at least $\frac{\varepsilon}{3} \binom{n}{t}$ sets $T$ of size $t$ in $V(G)$ such that 
$e(G[T]) \ge (1-\frac{1}{k} + \frac{\varepsilon}{2} ) 
\frac{t^2}{2}$. Each of these $t$-sets contains a copy of $K_{k+1}$. So $G$ contains at least $\frac{\varepsilon}{3} 
\binom{n}{t} \big/ 
\binom{n- k-1}{t-k-1 }$ copies of $K_{k+1}$; see \cite[Lemma 2.1]{Kee2011}. 
The second approach uses the vertex-deletion argument, which shows that $G$ contains a subgraph $G'$ on $n'\ge  \frac{1}{4}\sqrt{\varepsilon}n$ vertices with  minimum degree $\delta (G')\ge (1-\frac{1}{k}+\frac{\varepsilon }{2})n'$; see \cite[Proposition 4.2]{Kee2011}. 
By the greedy algorithm, we can find $\Omega_{k,\varepsilon}((n')^{k+1})=\Omega (n^{k+1})$ copies of $K_{k+1}$ in the subgraph $G'$.  
The third approach is to use the Moon--Moser theorem, which counts the number of cliques iteratively and implies that $G$ has at least $\frac{\varepsilon}{(k+1)^k} n^{k+1}$ copies of $K_{k+1}$; see, e.g., \cite{Niki2008blms}.

Let $\# K_{k+1}$ be the number of copies of $K_{k+1}$ in a graph $G$. 
In 2007, Bollob\'{a}s and Nikiforov \cite{BN2007jctb} established the spectral supersaturation for cliques by proving that for every $k\ge 2$, 
\begin{equation} \label{eq-BN-spectral-counting}
\# K_{k+1} \ge \left( \frac{\lambda (G)}{n}-1+\frac{1}{k}\right)\frac{k(k-1)}{k+1}
\left( \frac{n}{k}\right)^{k+1}. 
\end{equation}
As an application of (\ref{eq-BN-spectral-counting}), one can easily obtain the following result.

\begin{theorem}[Bollob\'{a}s--Nikiforov \cite{BN2007jctb}] \label{thm-BN}
    If $k\ge 1,\varepsilon >0$ and $G$ is an $n$-vertex graph with 
    \[ \lambda (G) 
    \ge \left( 1- \frac{1}{k} + 
    \varepsilon \right)n, \]   
then $G$ contains $\Omega_{k,\varepsilon}  (n^{k+1})$ copies of $K_{k+1}$. 
\end{theorem}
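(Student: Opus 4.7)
The theorem is an immediate consequence of the counting inequality (\ref{eq-BN-spectral-counting}) stated just above, so the plan is to substitute the hypothesis and verify the conclusion, and then to briefly indicate how (\ref{eq-BN-spectral-counting}) is itself proved.

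For $k \ge 2$, the hypothesis $\lambda(G)/n \ge 1 - 1/k + \varepsilon$ plugged into (\ref{eq-BN-spectral-counting}) yields
\[
\# K_{k+1}(G) \;\ge\; \varepsilon \cdot \frac{k(k-1)}{k+1} \cdot \left(\frac{n}{k}\right)^{k+1}
\;=\; \frac{(k-1)\,\varepsilon}{(k+1)\, k^{k}}\, n^{k+1},
\]
which is $\Omega_{k,\varepsilon}(n^{k+1})$. The degenerate case $k=1$ is not covered by (\ref{eq-BN-spectral-counting}) because the coefficient $k-1$ vanishes, but the statement is trivial there: from the Rayleigh bound $\lambda(G) \le \sqrt{2\,e(G)}$ and $\lambda(G) \ge \varepsilon n$ one obtains $e(G) \ge \varepsilon^{2} n^{2}/2$, producing $\Omega_{\varepsilon}(n^{2})$ copies of $K_{2}$.

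The substantive content is therefore the Bollob\'{a}s--Nikiforov counting inequality (\ref{eq-BN-spectral-counting}), whose proof I would sketch as follows. Let $x$ denote the Perron eigenvector of $A(G)$ and, for each $s \ge 1$, define the weighted $s$-clique sum
\[
w_s \;=\; \sum_{\{v_1,\dots,v_s\}\,\in\, K_s(G)} x_{v_1} x_{v_2}\cdots x_{v_s}.
\]
At the base level, $2 w_2 = x^\top A(G)\, x = \lambda(G)\, \|x\|_2^{2}$, which after a standard normalization of $x$ pins down $w_2$ quantitatively in terms of $\lambda(G)/n$. One then proves a weighted Moon--Moser-style recursion bounding $w_{s+1}$ from below in terms of $w_s$ and $w_{s-1}$, by expanding $\lambda\, x_v = \sum_{u\sim v} x_u$ over the common neighborhood of a given $s$-clique and applying AM--GM. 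Iterating from $s = 2$ up to $s = k$, and finally converting $w_{k+1}$ back to the unweighted count $\# K_{k+1}(G)$ via a bound on $\|x\|_\infty$, yields (\ref{eq-BN-spectral-counting}).

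The main technical obstacle in that proof is the propagation of the ``spectral slack'' $\lambda(G)/n - 1 + 1/k$ through the $k-1$ levels of the Moon--Moser recursion, since the natural per-level density bound scales like $1 - 1/s$ rather than $1 - 1/k$, and the $\varepsilon$ gain must be tracked carefully lest it be absorbed. A cleaner but quantitatively weaker alternative avoids the iteration entirely by first extracting, via iterative deletion of low-weight vertices and control of the Perron coordinates, a subgraph $H \subseteq G$ with $|V(H)| \ge c(\varepsilon) n$ and minimum degree $\delta(H) \ge (1 - 1/k + \varepsilon/2)\,|V(H)|$, and then greedily counting ordered $(k+1)$-cliques in $H$: a straightforward inclusion--exclusion shows that the common neighborhood of any partial clique of size $j \le k$ in $H$ has size at least $(1 - j/k + j\varepsilon/2)\,|V(H)|$, so the product of these bounds over $j = 0, 1, \dots, k$ delivers $\Omega_{k,\varepsilon}(|V(H)|^{k+1}) = \Omega_{k,\varepsilon}(n^{k+1})$ copies of $K_{k+1}$ in $G$.
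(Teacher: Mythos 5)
Your proof is correct and matches the paper's approach exactly: the paper derives Theorem~\ref{thm-BN} precisely by the one-line substitution of $\lambda(G)/n \ge 1 - 1/k + \varepsilon$ into the Bollob\'{a}s--Nikiforov counting inequality~(\ref{eq-BN-spectral-counting}), and your separate treatment of $k=1$ via $\lambda(G) \le \sqrt{2m}$ is the same observation the paper makes in its remark after Theorem~\ref{sES}. Your added sketch of the weighted Moon--Moser recursion behind~(\ref{eq-BN-spectral-counting}) and of the vertex-deletion alternative are both consistent with the surrounding discussion but are extra material the paper handles by citation.
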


The spectral supersaturation for counting general graphs (rather than cliques) could be obtained by using the graph removal lemma. We refer to a paper of Li, Lu and Peng \cite[Sec. 5]{LLP2024-AAM}.

As the first new result of this paper, we give an extension of Theorem \ref{thm-BN} and prove the following supersaturation in terms of the signless Laplacian spectral radius. 
Our proof is based on the vertex-deletion argument and develops a different method from that of Theorem \ref{thm-BN}.

\begin{theorem} \label{thm-1-2}
    If $k\ge 2,\varepsilon>0$ and $G$ is an $n$-vertex graph with 
    \[ q(G)\ge \left(1-\frac{1}{k} + \varepsilon \right)2n, \]
    then $G$ contains $\Omega_{k,\varepsilon} (n^{k+1})$ copies of $K_{k+1}$. 
\end{theorem}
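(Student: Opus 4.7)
The plan is to use a vertex-deletion argument to extract a subgraph $G' \subseteq G$ on $n' = \Omega_{k,\varepsilon}(n)$ vertices with minimum degree $\delta(G') \geq (1-\frac{1}{k}+\frac{\varepsilon}{2}) n'$. Once such a $G'$ is found, the classical Moon--Moser iteration (equivalently, the Kruskal--Katona theorem) gives $\Omega_{k,\varepsilon}(n'^{k+1}) = \Omega_{k,\varepsilon}(n^{k+1})$ copies of $K_{k+1}$ in $G'$, hence in $G$. The whole theorem therefore reduces to the structural claim above.

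Two auxiliary facts about the unit Perron eigenvector $x \geq 0$ of $Q(G)$ drive the argument. First, an $\ell_\infty$ bound: at the vertex $v^*$ realizing $\|x\|_\infty$, the eigenvalue equation $q(G) x_{v^*} = d(v^*) x_{v^*} + \sum_{u \sim v^*} x_u$ combined with Cauchy--Schwarz $\sum_{u \sim v^*} x_u \leq \sqrt{d(v^*)(1 - x_{v^*}^2)}$ yields
\[
  \|x\|_\infty \leq \frac{\sqrt{d(v^*)}}{q(G) - d(v^*)} \leq \frac{1}{(1 - \frac{2}{k} + 2\varepsilon)\sqrt{n}} = O_{k,\varepsilon}(1/\sqrt{n}),
\]
using $q(G) \geq (2 - \frac{2}{k} + 2\varepsilon) n > n$ for $k \geq 2$. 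Second, the Rayleigh upper bound $q(G) \leq 2 \sum_v d(v) x_v^2$, combined with $\sum_v x_v^2 = 1$, after splitting on $V_{\mathrm{hi}} = \{v : d_G(v) \geq (1-\frac{1}{k}+\frac{\varepsilon}{2}) n\}$ and using $d(v) < n$ off $V_{\mathrm{hi}}$, forces $\sum_{v \in V_{\mathrm{hi}}} x_v^2 \geq \Omega_{k,\varepsilon}(1)$. Combined with the $\ell_\infty$ bound, this yields $|V_{\mathrm{hi}}| \geq c_1(k,\varepsilon)\, n$.

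To carry out the deletion, at step $i$ let $x^{(i)}$ be the Perron vector of $Q(G_i)$; if some $v \in V(G_i)$ satisfies $d_{G_i}(v) < (1-\frac{1}{k}+\frac{\varepsilon}{2}) n_i$, delete one such $v$ of minimal $x^{(i)}_v$. The Rayleigh identity
\[
  q(G_{i+1}) \geq \frac{q(G_i) - \sum_{u \sim v}(x^{(i)}_u + x^{(i)}_v)^2}{1 - (x^{(i)}_v)^2},
\]
together with the inherited $\ell_\infty$ bound on $x^{(i)}$ (valid while the invariant $q(G_i) > n_i$ holds), shows that each deletion drops $q$ by only $O_{k,\varepsilon}(1)$. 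Iterating, $q(G_t) \geq q(G) - O_{k,\varepsilon}(t)$, so the invariant $q(G_i) \geq (1-\frac{1}{k}+\frac{\varepsilon}{2}) 2 n_i$ is preserved for $t \leq c_2(k,\varepsilon)\, n$. The process must halt within this many steps, producing $G' = G_t$ with $n' \geq (1 - c_2) n$ and $\delta(G') \geq (1-\frac{1}{k}+\frac{\varepsilon}{2})\, n'$.

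The main obstacle is the iteration in the previous paragraph --- verifying that the cumulative drop of $q$ over $\Omega(n)$ deletions remains within budget. The crucial input is the $\ell_\infty$ bound on the Perron vector, which fails in general (e.g., the star $K_{1,n-1}$ has a Perron entry approaching $1$) but is rescued here by the strict excess $q(G) > n$ delivered by the hypothesis. Once this $\ell_\infty$ control is in place, each deletion step is cheap, and the $\Omega(\varepsilon n)$ slack in the spectral hypothesis comfortably absorbs the total drop; the counting step in $G'$ is then routine Moon--Moser.
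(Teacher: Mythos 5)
Your high-level plan mirrors the paper's: iteratively delete low-degree vertices, extract a subgraph $G'$ on $\Omega_{k,\varepsilon}(n)$ vertices with $\delta(G')\geq(1-\tfrac1k+\tfrac\varepsilon2)|G'|$, then invoke supersaturation. Your auxiliary facts also check out: the $\ell_\infty$ bound $\|x\|_\infty\leq\sqrt{d(v^*)}/(q-d(v^*))$ follows from the eigenequation and Cauchy--Schwarz (using $q>n>d(v^*)$), the lower bound on $\sum_{v\in V_{\mathrm{hi}}}x_v^2$ follows from $q\leq 2\sum_v d(v)x_v^2$, and the deletion identity $q(G_{i+1})\geq(q(G_i)-\sum_{u\sim v}(x_u+x_v)^2)/(1-x_v^2)$ is correct.

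The gap is in the termination step. Your additive estimate $q(G_t)\geq q(G)-O_{k,\varepsilon}(t)$ only shows the invariant $q(G_t)\geq 2(1-\tfrac1k+\tfrac\varepsilon2)n_t$ survives for $t\leq c_2 n$; it gives no reason why the algorithm (``delete a low-degree vertex if one exists'') must actually stop before $t=c_2n$. The invariant holding and the process continuing are not in tension -- a graph can satisfy $q(G_t)\geq 2\gamma n_t$ with $\gamma<1$ while still containing a low-degree vertex -- so the sentence ``the process must halt within this many steps'' is an unsupported assertion. The paper's Theorem~3.3 closes this loop by proving a \emph{multiplicative} gain at each step: using the sharper bound $n_ix_{u_i}^2\leq 1-\alpha$ (from Lemma~3.1, which exploits $\delta(G_i)\leq(\gamma-\alpha)n_i$, not just $x_{u_i}$ being the minimum) and Lemma~3.2, one gets $\frac{q(G_{i+1})}{n_i-2}\geq\frac{q(G_i)}{n_i-1}\bigl(1+\frac{2\alpha\beta}{3(n_i-1)}\bigr)$, so the ratio $q(G_i)/|G_i|$ strictly increases by a factor $1+\Theta(1/n_i)$ per step. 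Telescoping this over $\Omega(n)$ deletions drives the ratio above~$2$, contradicting the trivial bound $q(G_t)<2|G_t|$, and that contradiction is what forces termination before $\lceil cn\rceil$ steps. Your additive bookkeeping has no such self-reinforcing growth (the per-step drop you can actually certify is roughly $q/(q-d(v))$, which can approach~$2$ or more, whereas the target decreases by only $2\gamma<2$), so the budget argument alone cannot produce a contradiction. To repair the proof you would need either the paper's multiplicative Lemmas~3.1--3.2, or an argument that your per-step drop is strictly below $2\gamma$, neither of which is established in the proposal.

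Two smaller remarks: (i) in the deletion rule, any vertex $v$ with $d_{G_i}(v)<\gamma n_i$ already has $x_v^2\leq d(v)/(q-d(v))^2=O(1/n_i)$, so restricting to the minimum-coordinate low-degree vertex is unnecessary; (ii) your final counting step cites Moon--Moser/Kruskal--Katona where the paper uses the Erd\H{o}s--Simonovits supersaturation lemma -- both give $\Omega_{k,\varepsilon}(n^{k+1})$ cliques from the density $e(G')\geq(1-\tfrac1k+\tfrac\varepsilon2)|G'|^2/2$, so this part is interchangeable.
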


\noindent 
{\bf Remark.} 
 Theorem \ref{thm-1-2}
extends Theorem \ref{thm-BN} by the fact $q(G)\ge 2\lambda (G)$. Moreover, 
the bounds in both Theorem \ref{thm-BN} and \ref{thm-1-2} are optimal up to a constant factor by embedding $\varepsilon n^2$ edges to a partite set of the $k$-partite Tur\'{a}n graph $T_{n,k}$ such that there is no copy of $K_{k+1}$ within this partite set. This is possible since the partite set contains $n/k$ vertices, where we may assume that $k$ divides $n$. Then the resulting graph $G$ has exactly $(1- \frac{1}{k})\frac{n^2}{2} + \varepsilon n^2$ edges. Therefore, we have $\lambda (G)\ge \frac{2m}{n} > (1- \frac{1}{k})n + \varepsilon n$ and $q(G)\ge (1- \frac{1}{k})2n + 2\varepsilon n$. While $G$ contains $\varepsilon n^2 \cdot (\frac{n}{k})^{k-1} = O_{k,\varepsilon} (n^{k+1})$ copies of $K_{k+1}$.

Recall that $K_{k}[t]$ denotes 
{\it the $t$-blowup} of $K_k$, which is 
the complete $k$-partite graph where each partite set has size $t$.  
In 1946, Erd\H{o}s and Stone \cite{ES1946} proved that for fixed $k\ge 2, t\ge 1,  \varepsilon >0$ and sufficiently large $n$, if $G$ is an $n$-vertex graph with $e(G)\ge (1- \frac{1}{k} + \varepsilon) \frac{n^2}{2}$, then $G$ contains a copy of $K_{k+1}[t]$. Furthermore, 
Bollob\'as and Erd\H{o}s \cite{BE1973} proved that the size $t=\Omega_{k,\varepsilon}(\log n)$. 
There are many results that focus on improving the bound on $t$. Bollob\'{a}s, Erd\H{o}s and Simonovits \cite{BES1976} 
proved that $t\ge \frac{c \log n}{k \log (1/\varepsilon)}$ for some absolute constant $c >0$.  
We refer to \cite{CS1981,BK1994,Ish2002} for related results.

In 2009, Nikiforov \cite{ESB2009} proved the following spectral generalization.

\begin{theorem}[Nikiforov \cite{ESB2009}]\label{sES}
If $k\geq 1$, $\varepsilon >0$ and $G$ is an $n$-vertex graph with 
\[ \lambda(G)\geq \left(1-\frac{1}{k}+\varepsilon \right)n, \]  
then $G$ contains a copy of $K_{k+1}[t]$ with $t=\Omega_{k,\varepsilon} (\log n)$.
\end{theorem}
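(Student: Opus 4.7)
The plan is to reduce the spectral hypothesis to a combinatorial edge-density hypothesis on a linear-size induced subgraph, and then to invoke the classical Bollob\'as--Erd\H{o}s blowup theorem (1973) as a black box.

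The heart of the argument is the following density reduction: find an induced subgraph $G' \subseteq G$ on $n' = \Omega_{k,\varepsilon}(n)$ vertices with
$e(G') \ge (1 - \tfrac{1}{k} + \tfrac{\varepsilon}{2}) \binom{n'}{2}$.
To carry this out, let $x$ denote the Perron eigenvector of $A(G)$, normalized by $\|x\|_2 = 1$, and let $U = \{v \in V(G) : x_v \ge \delta/\sqrt{n}\}$ for a suitable constant $\delta = \delta(k, \varepsilon) > 0$. A careful accounting of the Rayleigh quotient $\lambda(G) = 2 \sum_{uv \in E(G)} x_u x_v$ shows that edges incident to light vertices (those outside $U$) contribute only $O(\delta \lambda)$ in total, so almost all of the spectral mass is borne by $G[U]$. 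A Cauchy--Schwarz argument on the edges inside $U$ then converts this concentrated spectral mass into the desired edge-density lower bound on $G' := G[U]$, while the mass accounting simultaneously forces $|U| = \Omega_{k,\varepsilon}(n)$. As a complementary handle, one may also invoke Theorem \ref{thm-BN} to ensure a baseline abundance of $K_{k+1}$'s that survive into $G'$.

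Once $G'$ is in hand, the Bollob\'as--Erd\H{o}s theorem applied to $G'$ immediately produces a copy of $K_{k+1}[t]$ in $G'$, and hence in $G$, with
$t = \Omega_{k,\varepsilon}(\log n') = \Omega_{k,\varepsilon}(\log n)$, as required.

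The main obstacle is the spectral-to-density reduction itself. Naive bounds such as $\lambda^2 \le 2 e(G)$ only give $e(G) \ge (1 - \tfrac{1}{k} + \varepsilon)^2 n^2/2$, which lies strictly below the Tur\'an threshold $(1 - \tfrac{1}{k})n^2/2$ needed to invoke Bollob\'as--Erd\H{o}s directly on the whole graph $G$. One must therefore exploit the concentration of Perron weight on a linear-size ``core'' of the graph and calibrate the threshold $\delta$ so that the slack $\varepsilon/2$ survives on $G'$; the rest of the proof is standard.
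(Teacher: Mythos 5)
Your threshold-based density reduction has a genuine gap. Form $U = \{v : x_v \ge \delta/\sqrt{n}\}$, note that the contribution to the Rayleigh quotient from edges with a light endpoint is at most $2\sum_{v\notin U} x_v \sum_{u\sim v} x_u = 2\lambda\sum_{v\notin U} x_v^2 < 2\lambda\delta^2$, and conclude $2\sum_{uv\in E(G[U])} x_u x_v \ge \lambda(1-2\delta^2)$. So far so good. But the Cauchy--Schwarz step then gives
\[
\lambda(1-2\delta^2) \;\le\; 2\sqrt{e(G[U])}\cdot\Bigl(\sum_{uv\in E(G[U])} x_u^2 x_v^2\Bigr)^{1/2} \;\le\; \sqrt{2\,e(G[U])},
\]
i.e.\ $e(G[U]) \ge \tfrac{1}{2}\lambda^2(1-2\delta^2)^2$. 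This is exactly the naive bound $\lambda^2 \le 2e$ you already identify in your ``obstacle'' paragraph as being strictly below the Tur\'an threshold, merely multiplied by $(1-2\delta^2)^2<1$; discarding light vertices has not improved the conversion at all. It also only gives you a \emph{lower} bound on $|U|$ (from $\lambda(G[U]) \ge \lambda(1-2\delta^2)$ and $\lambda(G[U]) < |U|$). To turn $e(G[U]) \gtrsim \lambda^2/2$ into $e(G[U]) \ge (1-\tfrac{1}{k}+\tfrac{\varepsilon}{2})\binom{|U|}{2}$ you would need an \emph{upper} bound on $|U|$ of roughly $(1-\tfrac{1}{k}+\varepsilon)n/\sqrt{1-\tfrac{1}{k}}$, and nothing forces one: if the Perron eigenvector is nearly uniform (as it is, say, for a dense nearly-regular graph), then $U=V$ and you are exactly back at the naive bound on all of $G$. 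The calibration of $\delta$ cannot rescue this, because the loss in the Cauchy--Schwarz step is quadratic in $\lambda$ while the slack you have is only linear in $\varepsilon$.

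The idea you are missing --- present both in Nikiforov's original proof of this theorem and in the signless-Laplacian analogue the paper establishes (Theorem~\ref{h1}, used to prove Theorem~\ref{lsES}) --- is iterative, one-vertex-at-a-time deletion. Lemmas of the type \ref{min} and \ref{sdf} show that if the current graph has a vertex of small degree, then the minimum Perron coordinate is small, and deleting that vertex strictly \emph{increases} the normalized spectral radius $\lambda(G_{n-i})/(n-i-1)$ by a factor $1+\Theta_{k,\varepsilon}(1/n)$. If this could be iterated $\Omega(n)$ times the spectral radius would exceed the trivial upper bound $n-1$, a contradiction; hence after fewer than, say, $(1-c)n$ deletions the process must stop with an induced subgraph on $\Omega_{k,\varepsilon}(n)$ vertices whose minimum degree is at least $(1-\tfrac{1}{k}+\tfrac{\varepsilon}{2})$ of its order. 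Only then does one have the edge density to feed into Bollob\'as--Erd\H{o}s. It is the cumulative multiplicative gain across many greedy deletions, not a one-shot threshold, that bridges the quadratic gap between $\lambda^2$ and $e(G)$.

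Your side remark does contain the seed of a correct alternative: Theorem~\ref{thm-BN} gives $\Omega_{k,\varepsilon}(n^{k+1})$ copies of $K_{k+1}$, and Nikiforov's 2008 theorem \cite{Niki2008blms} (any $n$-vertex graph with $\varepsilon n^{k+1}$ copies of $K_{k+1}$ already contains $K_{k+1}[t]$ with $t=\Omega_{k,\varepsilon}(\log n)$) then produces the blowup directly, with no density reduction needed. The paper notes this very route in the remark following the proof of Theorem~\ref{lsES}. If you promote that ``complementary handle'' to the main argument and drop the thresholding step, the proof goes through.
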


As the second new result, we prove the following extension of Theorem \ref{sES}.

\begin{theorem}\label{lsES}
If $k\geq 2$, $\varepsilon >0$ and $G$ is an $n$-vertex graph with 
\[ q(G)\geq \left(1-\frac{1}{k} 
+\varepsilon \right)2n, \]  
then $G$ contains a copy of $K_{k+1}[t]$ with $t=\Omega_{k,\varepsilon} (\log n)$.
\end{theorem}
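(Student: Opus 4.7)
The plan is to deduce Theorem \ref{lsES} from the supersaturation in Theorem \ref{thm-1-2} together with a classical combinatorial lemma of Erd\H{o}s on extracting large clique blowups from graphs rich in $K_{k+1}$-copies. This strategy precisely parallels Nikiforov's derivation of Theorem \ref{sES} from Theorem \ref{thm-BN}: the spectral hypothesis is used only at one point, to certify that $G$ contains many copies of $K_{k+1}$, and the rest of the argument is purely combinatorial.

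I would proceed in two steps. First, applying Theorem \ref{thm-1-2} to the hypothesis $q(G)\ge (1-\frac{1}{k}+\varepsilon)2n$ yields a constant $c=c(k,\varepsilon)>0$ such that $G$ contains at least $c\,n^{k+1}$ copies of $K_{k+1}$; this is the sole place where the signless Laplacian assumption enters. Second, I would invoke the following well-known lemma of Erd\H{o}s (underpinning the Bollob\'as--Erd\H{o}s theorem in \cite{BE1973,BES1976}): any $n$-vertex graph with at least $c\,n^{k+1}$ copies of $K_{k+1}$ contains a copy of $K_{k+1}[t]$ with $t=\Omega_{k,c}(\log n)$. Chaining the two estimates gives $t=\Omega_{k,\varepsilon}(\log n)$, as claimed. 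The logarithmic size emerges as follows: after a random equipartition of $V(G)$ into $k+1$ parts, which preserves a constant fraction of the $K_{k+1}$-copies as transversal cliques, one iteratively extracts large common-neighborhood subsets through the $k+1$ parts by a Kov\'ari--S\'os--Tur\'an-type counting argument, balanced across levels so that the final $t$ remains of order $\log n$.

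The principal obstacle is the bookkeeping in the combinatorial extraction: one must control constants uniformly across all $k+1$ levels of the clique so that the final $t$ remains $\Omega(\log n)$ rather than a smaller polylogarithmic quantity. Since this step is entirely classical and identical to that in Nikiforov's proof of Theorem \ref{sES}, I would cite it rather than reprove it. Finally, the bound $t=\Omega_{k,\varepsilon}(\log n)$ is optimal up to a constant factor, as certified by the standard probabilistic construction: inserting a random graph of density $\Theta_k(\varepsilon)$ into one part of $T_{n,k}$ yields an $n$-vertex graph $G$ with $q(G)\ge (1-\frac{1}{k}+\varepsilon)2n$, while every $K_{k+1}[t]$ in $G$ must contain a $K_{t,t}$ inside the random part, forcing $t=O_{k,\varepsilon}(\log n)$.
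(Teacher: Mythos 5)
Your proposal is correct, but it follows a route genuinely different from the paper's own proof --- and in fact it is precisely the alternative derivation the authors acknowledge in a short remark immediately after their proof of Theorem \ref{lsES}. The paper's argument splits on the minimum degree of $G$: if $\delta(G)>(1-\frac{1}{k}+\frac{\varepsilon}{2})n$, it passes directly to edge density and applies the Bollob\'as--Erd\H{o}s theorem (Lemma \ref{lem-BE1973}); otherwise it runs the eigenvector-driven reduction algorithm (Theorem \ref{h1}) to produce an induced subgraph $H$ with $|H|=\Omega_{k,\varepsilon}(n)$, $q(H)>2\gamma|H|$ and $\delta(H)>(\gamma-\alpha)|H|$, and again applies Bollob\'as--Erd\H{o}s to $H$. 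Your route instead first invokes Theorem \ref{thm-1-2} to obtain $\Omega_{k,\varepsilon}(n^{k+1})$ copies of $K_{k+1}$, and then extracts a blowup from the clique count. Both are valid and closely related, since the paper's Theorem \ref{thm-1-2} is itself proved via the same Theorem \ref{h1} machinery; the paper's direct route avoids an extra counting step, while your route modularizes the argument and makes the supersaturation-to-blowup reduction explicit (and would, as the paper notes, let you import the stronger incomparable-graph bound of Brada\v{c} et al. for free). One correction on attribution: the precise cliques-to-blowup lemma you need is Nikiforov's theorem \cite{Niki2008blms} (every $n$-vertex graph with $\varepsilon n^{k+1}$ copies of $K_{k+1}$ contains $K_{k+1}[t]$ with $t=\Omega_{k,\varepsilon}(\log n)$), not the Bollob\'as--Erd\H{o}s statement itself, which starts from edge density rather than from clique counts; your informal reference to a ``lemma of Erd\H{o}s underpinning Bollob\'as--Erd\H{o}s'' should be replaced by the explicit citation to \cite{Niki2008blms}, and then your sketched random-partition extraction can simply be dropped in favor of citing that result, keeping the constants transparent.
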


\noindent 
{\bf Remark.}
We remark that 
the logarithmic dependence on $n$ in Theorems \ref{sES} and \ref{lsES} is optimal by considering random graphs of the same edge density; see Example \ref{exam-2-4} for details.

Observe that for any graph $F$ with $\chi (F)=k+1$, we can find an integer $t$ such that 
$F$ is a subgraph of $K_{k+1}[t]$. 
Theorem \ref{lsES} implies if $q(G)\ge (1- \frac{1}{k} + \varepsilon)2n$, then for sufficiently large $n$, we can find a copy of $F$ in $G$. Thus, Theorem \ref{lsES} is a generalization of Theorem \ref{eq-q-ESS}. 
Building on this viewpoint, we provide an alternative proof for Theorem \ref{eq-q-ESS}, different from the technique in \cite{ZLS2025}. 

We point out that Theorems \ref{thm-BN} and \ref{sES} are valid for every $k\ge 1$ although the original references have written the assumption $k\ge 2$. 
In fact, in the case $k=1$, if $G$ is an $n$-vertex graph with $\lambda (G)\ge \varepsilon n$, then combining with $\lambda (G) < \sqrt{2m}$, we have $m > \frac{1}{2}\varepsilon^2n^2$, that is, $G$ has positive edge density. Furthermore, the K\H{o}vari--S\'{o}s--Tur\'{a}n theorem (see, e.g., \cite[p. 311]{Bollobas78}) implies that $G$ contains a copy of $K_2[t]$ with $t=\Omega_{\varepsilon}(\log n)$. 
So Theorems \ref{thm-BN} and \ref{sES} hold in the case $k=1$.

We emphasize that both Theorems \ref{thm-1-2} and \ref{lsES} do {\bf not} hold in the case $k=1$, since $q(G)\ge 2\varepsilon n$ can not guarantee that $G$ has positive edge density.  
For example, taking $s\in \mathbb{N}$ as a fixed integer, we know that $q(K_{s,n-s}) = n \ge 2 \varepsilon n$ for every $\varepsilon \le \frac{1}{2}$, but $K_{s,n-s}$ has exactly $s(n-s)=O(n)$ edges and it does not contain a copy of $K_{2}[t]$ with $t=\Omega_{\varepsilon}(\log n)$, since $s$ is a fixed integer.

\subsection{Finding large color-critical graphs}

A graph $F$ is called {\it color-critical} if   there exists an edge $e$ of $F$ such that $\chi(F-e)<\chi(F)$, where $F-e$ denotes the graph obtained from $F$ by removing the edge $e$. 
This is a very broad and important class of graphs. 
For example, cliques, odd cycles, wheels with even order, cliques with one edge removed, complete bipartite graphs plus an edge, books and joints are color-critical; see \cite{Mub2010,PY2017,ZL2022jgt, LLZ2025+}. 
Extending the Tur\'{a}n theorem in (\ref{eq-Turan}), 
Simonovits \cite{S1968} proved that 
if $k\ge 2$ and $F$ is a color-critical graph with $\chi(F)=k+1$, then for sufficiently large $n$, every $n$-vertex $F$-free graph has at most 
$e(T_{n,k})$ edges, and $T_{n,k}$ is the unique  graph that achieves this bound.

\subsubsection{Complete multipartite graphs plus an edge}

 Let $K_{k}^{+}[t]$ be the graph obtained from the complete $k$-partite graph $K_{k}[t]$ by adding exactly one edge within a partite set.  
 In 2009, Nikiforov \cite{N2009} proved a spectral extension of the Simonovits result. 

 \begin{theorem}[Nikiforov \cite{N2009}] 
 \label{thm-2-5}
If $k\geq 2$ and $G$ is an $n$-vertex graph with 
\[ \lambda (G)> \lambda ( T_{n,k}), \]
then $G$ contains a copy of 
$K_{k}^{+}[t]$ with $t=\Omega_k( \log n)$.
\end{theorem}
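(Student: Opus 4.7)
The plan is to bootstrap Theorem \ref{sES} via a stability dichotomy on the gap $\lambda(G)-\lambda(T_{n,k})$. Fix a small $\varepsilon=\varepsilon(k)>0$ and split into two regimes. In the \emph{large-gap regime} $\lambda(G)\ge(1-\frac{1}{k}+\varepsilon)n$, Theorem \ref{sES} directly yields a copy of $K_{k+1}[s]$ in $G$ with $s=\Omega_{k,\varepsilon}(\log n)$, and a direct inspection shows $K_{k+1}[s]$ already contains $K_k^+[s]$: writing $W_1,\dots,W_{k+1}$ for the parts of the blowup, I would set $U_1=(W_1\setminus\{w\})\cup\{w'\}$ with $w\in W_1$, $w'\in W_{k+1}$, and $U_j=W_j$ for $j=2,\dots,k$. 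The $U_j$'s all have size $s$, the bipartite structure between distinct $U_j$'s is preserved, and inside $U_1$ the vertex $w'$ is joined to every vertex of $W_1\setminus\{w\}$, supplying the extra edge required by $K_k^+[s]$.

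The hard case is the \emph{tight regime} $\lambda(T_{n,k})<\lambda(G)<(1-\frac{1}{k}+\varepsilon)n$, where Theorem \ref{sES} is inapplicable. Here the plan is to invoke a spectral stability statement: a small spectral gap forces the edit-distance from $T_{n,k}$ to be $o(n^2)$, producing a partition $V(G)=V_1\cup\cdots\cup V_k$ with $\bigl||V_i|-n/k\bigr|=o(n)$ and almost all cross-pairs realized as edges of $G$. Because $\lambda(G)>\lambda(T_{n,k})$ and $T_{n,k}$ is the unique spectral extremizer (see \eqref{eq-a-Turan}), $G$ cannot be $k$-partite; a Rayleigh-quotient comparison on the Perron vector forces $\Omega_k(n)$ edges to lie inside the parts, and a pigeonhole concentrates $\Omega_k(n)$ of them inside a single class, say $V_1$.

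To produce the blowup, I would fix an interior edge $\{x,y\}\subseteq V_1$ and exploit the near-Tur\'an structure to ensure $|N(x)\cap N(y)\cap V_i|\ge(1-o(1))n/k$ for each $i\ge 2$. Iterating the K\H{o}v\'ari--S\'os--Tur\'an theorem over these $k-1$ common neighborhoods (equivalently, applying the combinatorial Erd\H{o}s--Stone theorem to the restricted subgraph, which is dense and almost complete $(k-1)$-partite) yields a $K_{k-1}[t]$ on the common layers with $t=\Omega_k(\log n)$. A final bipartite round between $V_1$ and this $K_{k-1}[t]$ thickens $\{x,y\}$ into a set $U_1\subseteq V_1$ of size $t$ whose vertices are jointly adjacent to the $K_{k-1}[t]$; since $\Omega_k(n)$ interior edges still sit inside $V_1$, a density/union-bound argument forces at least one of them to fall inside $U_1$, completing $K_k^+[t]$.

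The \emph{main obstacle} is the tight regime. Concentrating the interior edges within a single class $V_1$ requires a Zykov-type symmetrization applied to the Perron vector, showing that otherwise one could redistribute weight to obtain a $k$-partite graph with spectral radius at least $\lambda(G)$, contradicting $\lambda(G)>\lambda(T_{n,k})$. Preserving $t=\Omega_k(\log n)$ through the $k$-fold K\H{o}v\'ari--S\'os--Tur\'an iteration requires each round to leave a constant fraction of the density intact, which is delivered by the stability hypothesis but needs a careful quantitative tracking so that the constants in $\Omega_k(\cdot)$ do not collapse.
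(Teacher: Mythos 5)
Your large-gap case is sound: the observation that $K_{k+1}[s]$ contains $K_k^{+}[s]$ (swap one vertex of $W_1$ for a vertex of $W_{k+1}$) is correct, so Theorem~\ref{sES} handles $\lambda(G)\ge(1-\frac1k+\varepsilon)n$. The trouble is concentrated, as you foresee, in the tight regime, and there the gaps are genuine rather than bookkeeping.

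First, the stability step is not available as stated. Theorem~\ref{thm-Nik-stability} requires $G$ to be $F$-free for a \emph{fixed} graph $F$ with $\chi(F)=k+1$; there is no statement of the form ``$\lambda(G)$ slightly above $\lambda(T_{n,k})$ forces $G$ to be $o(n^2)$-close to $T_{n,k}$'' without a forbiddance hypothesis (a graph of spectral radius $(1-\frac1k+\frac{\varepsilon}{2})n$ need not resemble $T_{n,k}$ at all). The natural fix is the contrapositive, i.e.\ assume $G$ is $K_k^{+}[t]$-free; but the target graph $K_k^{+}[t]$ has $t=\Theta(\log n)$, so the constants $\delta,n_0$ supplied by Theorem~\ref{thm-Nik-stability} blow up with $n$ and the theorem cannot be invoked uniformly. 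Forbidding a fixed small copy like $K_k^{+}[2]$ does not help either: if $G$ contains $K_k^{+}[2]$ but not the large blowup, you are again outside the hypotheses.

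Second, even granting a near-$T_{n,k}$ partition with $o(n^2)$ edit distance, the final extraction step does not survive quantitatively. To lift $\{x,y\}$ to a $t$-set $U_1\subseteq V_1$ all of whose vertices see the entire $K_{k-1}[t]$, you need a \emph{per-vertex} non-neighbor bound (each vertex of $V_1$ misses $O(n/\log n)$ cross-neighbors, say), but stability only gives a \emph{total} bound of $o(n^2)$ missing edges. That $o(1)$ density deficit could be $1/\log\log n$, in which case the number of vertices of $V_1$ adjacent to all of a fixed $(k-1)t$-set need not reach $t$. The same issue propagates through each round of your ``$k$-fold K\H{o}v\'ari--S\'os--Tur\'an iteration'': with only total edge control, the logarithmic size does not survive the rounds.

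The paper resolves exactly this by replacing stability with a minimum-degree reduction. Theorem~\ref{h1} (the vertex-deletion algorithm along the Perron eigenvector) passes from $G$ to an induced subgraph $H$ of linear order that still satisfies the Tur\'{a}n-type spectral threshold \emph{and} has minimum degree larger than $(1-\frac1k-\frac1{k^4})|H|$. Then Lemma~\ref{h2} (Nikiforov, 2010) applies: the presence of a $K_{k+1}$ together with this per-vertex degree condition directly yields $K_k^{+}[t]$ with $t=\Omega_k(\log|H|)=\Omega_k(\log n)$. The minimum-degree condition is precisely the per-vertex control your K\H{o}v\'ari--S\'os--Tur\'an iteration needs and that stability cannot supply, which is why the paper's route closes cleanly while yours stalls in the tight regime.
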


Next, we present an analogue for the signless Laplacian spectral radius. 

\begin{theorem}\label{thm-h3}
If $k\geq 3$ and $G$ is an $n$-vertex graph with 
\[ q(G)\ge q( T_{n,k}), \] 
then $G$ contains a copy of 
$K_{k}^{+}[t]$ with $t=\Omega_k(\log n)$, unless $G=T_{n,k}$.
\end{theorem}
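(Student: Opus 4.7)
The plan is to split the argument into two regimes depending on how close $q(G)$ is to the extremal value $q(T_{n,k})$, in analogy with Nikiforov's proof of Theorem~\ref{thm-2-5} for the adjacency spectral radius. As a preliminary structural reduction I would record that $K_{k+1}[t]$ contains $K_k^{+}[t]$: keeping $k-1$ of the parts of $K_{k+1}[t]$ unchanged and forming the last part by taking $t-1$ vertices from one of the two remaining parts together with one vertex from the other yields $k$ parts of size $t$, and the single chosen vertex is automatically joined to each of the $t-1$ others, supplying the (single) extra edge required by $K_k^{+}[t]$. Hence it suffices to exhibit either $K_{k+1}[t]$ or $K_k^{+}[t]$ with $t=\Omega_k(\log n)$.

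Fix a small constant $\varepsilon_0=\varepsilon_0(k)>0$ to be chosen. In the high-radius regime $q(G)\ge (1-\tfrac{1}{k}+\varepsilon_0)\,2n$, Theorem~\ref{lsES} immediately supplies $K_{k+1}[t]$ with $t=\Omega_k(\log n)$, and the reduction above finishes this case.

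The delicate regime is $q(T_{n,k})\le q(G)<(1-\tfrac{1}{k}+\varepsilon_0)\,2n$ with $G\ne T_{n,k}$. Here I would invoke the signless Laplacian stability result of this paper (the third bullet of the main results) to obtain a vertex partition $V(G)=V_1\cup\cdots\cup V_k$ with $|V_i|=n/k+o(n)$ whose edit distance to $T_{n,k}$ is at most $\eta n^2$, where $\eta=\eta(\varepsilon_0)$ can be made arbitrarily small. A Perron-eigenvector analysis of $Q(G)$ should then isolate typical subsets $\widetilde V_i\subseteq V_i$ of vertices with codegree $\ge (1-\tfrac{1}{k}-o(1))n$ to every other part, and spectral extremality (since $q(G)\ge q(T_{n,k})$ but $G\ne T_{n,k}$) forces at least one ``defect edge'' $uv$ to lie inside some $\widetilde V_i$. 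Finally, applying the K\H{o}v\'{a}ri--S\'{o}s--Tur\'{a}n theorem to each near-complete bipartite graph $G[\widetilde V_i,\widetilde V_j]$ and combining the outputs by pairwise intersection (or a random-greedy selection), I would extract sets $W_j\subseteq\widetilde V_j$ of size $t=\Omega_k(\log n)$ that are pairwise completely joined, insisting that $u,v\in W_i$. The resulting subgraph contains the required $K_k^{+}[t]$.

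The main obstacle lies in this last regime: stability alone supplies only qualitative $o(n^2)$-edit-distance closeness to $T_{n,k}$, whereas the conclusion demands a quantitative $\log n$-sized blowup. One has to track how many typical vertices survive several filtering rounds, realize a pairwise-completely-joined system across $k$ bipartite pieces simultaneously (which calls for a careful quantitative K\H{o}v\'{a}ri--S\'{o}s--Tur\'{a}n embedding), and use a Perron-vector argument to guarantee that the defect edge $uv$ resides in the spectrally-heavy portion $\widetilde V_i$ of $G$ rather than among vertices of negligible Perron weight.
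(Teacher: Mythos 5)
Your opening reduction ($K_{k+1}[t]\supseteq K_k^{+}[t]$) is correct and your high-radius regime is handled cleanly by Theorem~\ref{lsES}, so that part of the split is sound. However, your plan for the delicate regime $q(T_{n,k})\le q(G)<(1-\tfrac{1}{k}+\varepsilon_0)2n$ has a genuine logical gap. Theorem~\ref{sst} is a stability statement for $F$-\emph{free} graphs: it concludes approximate $T_{n,k}$-structure only under the hypothesis that $G$ is $F$-free for a \emph{fixed} graph $F$ with $\chi(F)=k+1$. In the setting of Theorem~\ref{thm-h3} no such hypothesis is available --- $G$ is an arbitrary graph with large $Q$-index. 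If you try to manufacture one by assuming for contradiction that $G$ is $K_k^{+}[\ell]$-free for a fixed $\ell$, stability would at best yield a copy of $K_k^{+}[\ell]$, which falls far short of $t=\Omega_k(\log n)$; and if you assume $G$ is $K_k^{+}[c\log n]$-free, the forbidden graph now grows with $n$ and Theorem~\ref{sst} (whose $n_0$ depends on $F$) no longer applies. So the stability step as written cannot get off the ground, and the remaining steps (isolating a ``defect edge'' via Perron weights, running K\H{o}v\'ari--S\'os--Tur\'an simultaneously across all $\binom{k}{2}$ bipartite pieces while keeping $u,v$) are left at the level of plausibility rather than proof; you acknowledge this yourself, but the obstacle is essential and not a bookkeeping matter.

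The paper's argument sidesteps all of this and is considerably more economical. It invokes Nikiforov's Lemma~\ref{h2}, which says directly that a graph of order $n$ containing a $K_{k+1}$ and having minimum degree exceeding $(1-\tfrac{1}{k}-\tfrac{1}{k^4})n$ already contains $K_k^{+}[t]$ with $t=\Omega_k(\log n)$; this lemma carries the $\log n$ quantitative content that you were trying to extract from stability. Since $q(G)\ge q(T_{n,k})$ with $G\ne T_{n,k}$ forces a $K_{k+1}$ by~(\ref{eq-q-Turan}), the only thing missing is the minimum-degree hypothesis. If $\delta(G)$ is already large, Lemma~\ref{h2} applies directly; otherwise, the graph-reduction algorithm of Theorem~\ref{h1} (iteratively deleting the vertex of smallest Perron coordinate for $Q$) produces an induced subgraph $H$ on $\Omega_k(n)$ vertices with $q(H)>2(1-\tfrac{1}{k})|H|$ and $\delta(H)>(1-\tfrac{1}{k}-\tfrac{1}{k^4})|H|$, and Lemma~\ref{h2} then applies to $H$. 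There is no stability step and no near/far dichotomy in $q$. If you want a stability-flavored route that does close the gap, you would need a version of Lemma~\ref{h2} proved from first principles in the approximate $T_{n,k}$ picture --- but that is essentially reproving Nikiforov's lemma, which is precisely the ingredient the paper imports as a black box.
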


\noindent 
{\bf Remark.} 
The bounds in Theorems \ref{thm-2-5} and \ref{thm-h3} are optimal; see Example \ref{exam-2-6}. 
 Note that every color-critical $F$ with $\chi (F)=k+1$ is a subgraph of $K_{k}^{+}[t]$ for some integer $t$.  
Thus, Theorem \ref{thm-h3} implies that for 
any color-critical graph $F$ with $\chi(F)=k+1\ge 4$, if $n$ is sufficiently large, then 
$\mathrm{ex}_q(n,F) = q (T_{n,k})$, and $T_{n,k}$ is the unique extremal graph. This recovers a recent result of Zheng, Li and Li \cite{ZLL2025}.

\subsubsection{Joints}

{\it The book of size $t$}, denoted by $B_t$, is a graph that consists of $t$ triangles sharing a common edge. 
The booksize $bk(G)$ of a graph $G$ is the size of a largest book contained in $G$. 
Erd\H{o}s \cite{Erd1962a} proved that every graph on $n$ vertices with $\lfloor n^2/4\rfloor +1$ edges satisfies $bk(G) = \Theta(n)$ and conjectured that $bk(G)> n / 6$, which was confirmed by Edwards (see \cite[Lemma 4]{EFR1992}) 
and independently 
by Khad\v{z}iivanov and Nikiforov  \cite{KN1979}.   
Unfortunately, neither of the original references can be found.  
Two different proofs were provided by 
Bollob\'{a}s and Nikiforov \cite{BN2005} and Li, Feng and Peng \cite[Sec. 4.3]{LFP2024-triangular}.

The spectral extremal problem for books was widely investigated in recent years. 
In 2023, Zhai and Lin \cite{ZL2022jgt} proved that if $G$ is an $n$-vertex graph with $\lambda (G)> \lambda (T_{n,2})$, then $bk(G)> 2n/13$. Interestingly, they \cite[Problem 1.2]{ZL2022jgt} speculated that this can be improved to $bk(G)> n/6$. This is still unsolved. 
Confirming the Zhai--Lin--Shu conjecture  \cite{ZLS2021}, Nikiforov \cite{Niki2021} proved that if $G$ is an $m$-edge graph with $\lambda (G) > \sqrt{m}$, then $bk(G) =\Omega(m^{1/4})$. 
Furthermore, Li, Liu and Zhang \cite{LLZ2024-book-4-cycle} improved this bound by showing that  $bk(G)=\Omega({m}^{1/2})$ and the exponent $1/2$ is the best possible. 

In what follows, 
we turn attention to the study of two fundamental extensions of books, namely, joints and generalized books.  
An \emph{$r$-joint of size $t$} is a collection of $t$ cliques of order $r$ sharing a common edge. Let $js_r(G)$ denote the maximum size of an $r$-joint of $G$. For example, we have $js_3(G) = bk(G)$. 
An old result of Erd\H{o}s \cite{Erd1969} 
shows that every $n$-vertex graph $G$ with $e(G) > (1 - \frac{1}{k}) \frac{n^2}{2}$ contains not only one copy of  $K_{k+1}$, but also $\Omega_k(n^{k-1})$ copies of $K_{k+1}$ that share a common edge. In other words, we have $js_{k+1}(G)=\Omega_k(n^{k-1})$; 
see \cite{BN2008,BN2011} for related results. 
In 2009, Nikiforov \cite{N2009} provided a spectral extension by showing that if $\lambda (G)> (1- \frac{1}{k})n$, then $js_{k+1}(G)=\Omega_k(n^{k-1})$. 
Extending the result of Nikiforov \cite{Niki2021}, Li, Liu and Zhang \cite{LLZ2024-book-4-cycle} proved the following spectral Tur\'{a}n extension for graphs with given number of edges, instead of the number of vertices.

\begin{theorem}[Li--Liu--Zhang \cite{LLZ2024-book-4-cycle}] \label{thm-jsr+1}
    \label{thm:joints} 
    If $k\ge 2$ and $G$ is an $m$-edge graph with  
    \[  \lambda^2(G) > 
    \left(1 - \frac{1}{k} \right) 2m, \]  
    then $js_{k + 1}(G) = 
    \Omega_k (m^{\frac{k-1}{2}})$. 
    The bound is optimal up to a constant factor. 
\end{theorem}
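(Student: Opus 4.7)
The plan is to reduce Theorem~\ref{thm-jsr+1} to a spectral \emph{clique}-supersaturation bound and then finish by a standard averaging over edges. Specifically, I aim to show that under the hypothesis $\lambda^2(G) > (1-\tfrac{1}{k})2m$ the graph $G$ contains
\[
N \;:=\; \Omega_k\bigl(m^{(k+1)/2}\bigr)
\]
copies of $K_{k+1}$. Once this is available, since every $K_{k+1}$ contains $\binom{k+1}{2}$ edges of $G$, the pigeonhole principle yields an edge $uv \in E(G)$ lying in at least $\binom{k+1}{2} N / m = \Omega_k(m^{(k-1)/2})$ copies of $K_{k+1}$. These copies form precisely a $(k+1)$-joint at $uv$ of the required size, which gives $js_{k+1}(G) = \Omega_k(m^{(k-1)/2})$.

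For the clique-supersaturation step, I would follow the vertex-deletion philosophy sketched in the paragraph preceding Theorem~\ref{thm-BN}. Let $\mathbf{x}$ be a unit Perron eigenvector of $A(G)$, and iteratively delete the vertex $v$ of smallest $x_v$. The Rayleigh quotient yields
\[
\lambda(G - v) \;\geq\; \frac{\lambda(G)\bigl(1 - 2x_v^2\bigr)}{1 - x_v^2},
\]
so every pruning step changes $\lambda$ only by a $1+o(1)$ factor when $x_v$ is small. The goal is to terminate with a subgraph $G' \subseteq G$ on $n' = \Theta_k(\sqrt{m})$ vertices satisfying $\lambda(G')/n' \geq 1 - \tfrac{1}{k} + c_k$ for some positive $c_k = c_k(k)$. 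Once such $G'$ is produced, Theorem~\ref{thm-BN} delivers
$\#K_{k+1}(G') = \Omega_k\bigl((n')^{k+1}\bigr) = \Omega_k(m^{(k+1)/2})$
copies of $K_{k+1}$, all of which lie in $G$, completing the reduction.

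The main obstacle is to carry out this pruning with a loss that depends only on $k$. The hypothesis $\lambda^2 > 2m(1 - 1/k)$ is strict but contains no explicit quantitative gap, yet the pruned graph must exceed the Tur\'an threshold $1 - 1/k$ by a constant depending only on $k$. My plan for this step is a robustness version of Nikiforov's Cauchy--Schwarz plus Motzkin--Straus proof of $\lambda^2 \le 2m(1 - 1/\omega)$: near-equality in that chain forces the Perron vector to concentrate its $\ell_2$-mass on $\Theta_k(\sqrt{m})$ vertices, and these vertices span the desired dense subgraph. Making this concentration quantitative, and in particular transferring the edge-based hypothesis on $\lambda^2/(2m)$ to a vertex-based hypothesis on $\lambda(G')/n'$ inside the pruned graph, is the longest technical step and the delicate point of the argument.

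For optimality of the exponent $(k-1)/2$, I would perturb the Tur\'an graph $T_{n,k}$ (with $m \sim \binom{k}{2}(n/k)^2$) by adding a single edge inside one part. The resulting graph has $\lambda^2$ just above the Tur\'an threshold and exactly $\Theta_k(n^{k-1}) = \Theta_k(m^{(k-1)/2})$ copies of $K_{k+1}$ through the added edge, so the lower bound in the theorem is tight up to the constant factor.
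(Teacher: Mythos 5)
Your reduction stands or falls with the intermediate supersaturation claim, and that claim is false. You propose to first show that $\lambda^2(G) > \bigl(1-\tfrac{1}{k}\bigr)2m$ forces $N = \Omega_k\bigl(m^{(k+1)/2}\bigr)$ copies of $K_{k+1}$, then average over the $m$ edges. But your own optimality example, $T_{n,k}$ with one edge $e$ added inside a part, is a counterexample to that supersaturation claim: since $T_{n,k}$ is $K_{k+1}$-free, every $K_{k+1}$ in $T_{n,k}+e$ must contain $e$, so the total count is $(n/k)^{k-1} = \Theta_k\bigl(m^{(k-1)/2}\bigr)$, which is a factor $m$ smaller than the $m^{(k+1)/2}$ you need. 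Thus the pigeonhole step yields only $js_{k+1} = \Omega_k(m^{(k-1)/2}/m)$, which is vacuous. The structural reason is that the hypothesis is a threshold strengthening with no $\varepsilon$-gap; clique supersaturation of the form $\Omega(n^{k+1})$ or $\Omega(m^{(k+1)/2})$ needs an $\varepsilon n$ or $\varepsilon m$ excess (compare Theorems~\ref{thm-BN}, \ref{thm-1-2}), and that is precisely what is absent here.

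The vertex-pruning program in the middle of your sketch inherits the same problem: you aim to produce a subgraph $G'$ on $n'=\Theta_k(\sqrt{m})$ vertices with $\lambda(G')/n' \geq 1-\tfrac{1}{k}+c_k$ for a uniform $c_k>0$. Applying Theorem~\ref{thm-BN} to such a $G'$ would again deliver $\Omega_k(m^{(k+1)/2})$ copies of $K_{k+1}$, contradicting the same example, so no such $G'$ exists in general. The correct mechanism, and the one this paper uses for the signless Laplacian analogue (Theorem~\ref{thm-q-js}), is to reduce to a subgraph that has high \emph{minimum degree} and still contains a $K_{k+1}$, and then invoke the Bollob\'{a}s--Nikiforov joint lemma (Lemma~\ref{lem-lem6}): if $G$ contains $K_{k+1}$ and $\delta(G) > \bigl(1-\tfrac{1}{k}-\tfrac{1}{k^4}\bigr)n$, then $js_{k+1}(G) > n^{k-1}/k^{k+3}$. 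That lemma produces the common edge \emph{directly} and at the right order of magnitude $n^{k-1} = \Theta(m^{(k-1)/2})$, rather than by first inflating a (nonexistent) global clique count and then averaging it back down. Your averaging step and the optimality construction at the end are both fine; it is the first step that needs to be replaced by a min-degree reduction plus a joint-producing lemma.
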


Inspired by the above result, 
we establish the following analogue. 

\begin{theorem} \label{thm-q-js}
    If $k\ge 3$ and $G$ is an $n$-vertex graph with 
    \[ q(G) \ge  q(T_{n,k}), \]  
    then $js_{k+1}(G)=\Omega_k (n^{k-1})$,  
   unless $G=T_{n,k}$. 
\end{theorem}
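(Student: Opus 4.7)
The plan is to dichotomize based on the number of copies of $K_{k+1}$ in $G$. Assume $q(G) \ge q(T_{n,k})$ and $G \ne T_{n,k}$, and let $c_k > 0$ be a small constant to be chosen. If $G$ contains at least $c_k\, n^{k+1}$ copies of $K_{k+1}$, then averaging the edge-clique incidence over the at most $\binom{n}{2}$ edges of $G$ yields an edge lying in at least
\[
\frac{\binom{k+1}{2}\,c_k\, n^{k+1}}{\binom{n}{2}} = \Omega_k(n^{k-1})
\]
copies of $K_{k+1}$, which is the required joint; this mirrors the averaging step used to deduce supersaturation from Theorem \ref{thm-1-2}.

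In the complementary sparse regime $\#K_{k+1}(G) = o(n^{k+1})$, the graph removal lemma allows us to delete at most $\varepsilon n^2$ edges so that the resulting spanning subgraph $G'$ is $K_{k+1}$-free, for any $\varepsilon > 0$. By preferring to delete edges of small Perron weight with respect to the eigenvector $\mathbf{x}$ of $Q(G)$ (normalized by $\|\mathbf{x}\|_2 = 1$), I would arrange $q(G') \ge q(G) - o(n) \ge q(T_{n,k}) - o(n)$. The stability result of the third bullet of our main results, applied to the $K_{k+1}$-free graph $G'$, then yields a partition $V(G) = V_1 \cup \cdots \cup V_k$ with $|V_i| = n/k + o(n)$ such that $G$ lies within $o(n^2)$ edit distance of the complete $k$-partite graph on $V_1, \ldots, V_k$. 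Exploiting the eigenvalue equation $q(G)\, x_v = d(v)\, x_v + \sum_{u \sim v} x_u$ via a Kelmans-style local move, I would upgrade this global estimate to the pointwise bound $|V_j \setminus N(v)| = o(n)$ for every $v \in V_i$ and every $j \ne i$: any vertex violating this bound would admit a local rearrangement strictly increasing $q$, contradicting the near-extremality of $G$.

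Finally, strict monotonicity of $q$ under edge additions (via Perron--Frobenius) forces $G$ to contain at least one internal edge $uv \in V_i$: otherwise $G$ would be a proper subgraph of the complete $k$-partite graph on $V_1, \ldots, V_k$ and hence satisfy $q(G) < q(T_{n,k})$, contradicting the hypothesis. The pointwise degree bound then places $uv$ in
\[
\prod_{j \ne i}(n/k - o(n)) = \Omega_k(n^{k-1})
\]
copies of $K_{k+1}$, delivering the joint.

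The main obstacle is the edge-removal step inside the sparse case. Deleting $\varepsilon n^2$ edges can in principle decrease $q$ by as much as $\Theta(\varepsilon n)$, so one needs a refined selection (or a spectrally weighted version of the removal lemma) that ensures small cumulative Perron weight among the discarded edges. Because $Q(G)$ mixes the degree diagonal with the adjacency matrix, both this removal step and the subsequent shifting argument must track the Perron vector and the degree sequence simultaneously, which is strictly more intricate than the corresponding arguments for the adjacency spectral radius.
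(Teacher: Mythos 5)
Your approach is genuinely different from the paper's and, as you yourself flag, it has a real gap in the sparse case that the paper's route cleanly avoids. The paper dichotomizes on the minimum degree: if $\delta(G) > (1-\tfrac{1}{k}-\tfrac{1}{k^4})n$, then Lemma \ref{lem-lem6} (Bollob\'as--Nikiforov) immediately gives $js_{k+1}(G) = \Omega_k(n^{k-1})$ once a single $K_{k+1}$ is known to exist, which follows from \eqref{eq-q-Turan}; otherwise, Theorem \ref{h1} peels off low-Perron-weight vertices to produce an induced subgraph $H$ of linear order with $q(H) > 2(1-\tfrac{1}{k})|H|$ and $\delta(H) > (1-\tfrac{1}{k}-\tfrac{1}{k^4})|H|$, and Lemma \ref{lem-lem6} is applied to $H$. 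No removal lemma, stability, or Kelmans-style local surgery is needed.

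The gap in your sparse case is genuine and, I think, more serious than you suggest. The removal lemma specifies \emph{which} edges to delete, not you, and after deleting those edges there is no general reason why $q(G') \geq q(G) - o(n)$: with $\mathbf{x}$ the Perron vector of $Q(G)$, the decrease is bounded below only by $q(G) - \sum_{uv \in S}(x_u + x_v)^2$, and that sum can be $\Theta(n)$ when the prescribed set $S$ happens to meet vertices carrying a large share of the Perron mass, even if $|S| = o(n^2)$. Since $q(G) \geq q(T_{n,k})$ does not force $e(G)$ to be within $o(n^2)$ of $e(T_{n,k})$ either (the $Q$-index concentrates on high-degree vertices, not on edge count), you cannot fall back on the Erd\H{o}s--Simonovits edge-count stability for $G'$ directly. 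So a ``spectrally weighted removal lemma'' is not a technical refinement but the crux of the argument, and it is not provided. A secondary gap: the upgrade from $o(n^2)$ edit distance to the pointwise bound $|V_j \setminus N(v)| = o(n)$ for \emph{every} vertex $v$ is not automatic; your proposed Kelmans-style shift must also respect the degree term in $Q$, and that computation is neither given nor obviously routine. Your dense-case averaging argument, by contrast, is correct and standard; it is the complementary regime where the approach breaks down.
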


\noindent 
{\bf Remark.} 
 This bound is optimal up to a constant factor, which can be seen by adding an edge to a partite set of the $k$-partite Tur\'{a}n graph $T_{n,k}$. Then $js_{k+1}(G)=(\frac{n}{k})^{k-1}$.

\subsubsection{Generalized books}

Another way to extend the results on books is to 
consider {\it the generalized book} $B_{k,t}$, which 
is a graph obtained from $t$ copies of 
$K_{k+1}$ sharing a common $K_k$. 
In other words, we have $B_{k,t}:=K_k\vee I_t$, which is obtained by joining every vertex of a clique $K_k$ to every vertex of an independent set $I_t$ of size $t$. Particularly, we have $B_{2,t}=B_t$. Recently, 
Li, Liu and Zhang \cite{LLZ2024-book-4-cycle} proved the following result, which gives a lower bound on the largest generalized booksize.

\begin{theorem}[Li--Liu--Zhang \cite{LLZ2024-book-4-cycle}] \label{thm-LLZ2024}
If $k\ge 2$ and $G$ is an $m$-edge graph with 
\[ \lambda^2(G) > \left(1 - \frac{1}{k} \right)  2m, \]  
then $G$ has a copy of $B_{k, t}$ with $t = \Omega_k({m}^{1/2})$. The bound is optimal up to a constant factor. 
    \end{theorem}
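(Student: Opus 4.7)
The plan is to combine the joints theorem (Theorem \ref{thm-jsr+1}) with two extraction steps that first grow a $k$-clique with large common neighborhood and then locate a large independent set inside that neighborhood. The overall reduction is: if some clique $K_J$ (with $J\ge k$) has a common neighborhood containing an independent set of size $t$, then $B_{J,t}\subseteq G$, which contains $B_{k,t}$ as a subgraph, so it suffices to produce such a clique.

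First, I invoke Theorem \ref{thm-jsr+1} to obtain an edge $uv\in E(G)$ whose common neighborhood $W:=N(u)\cap N(v)$ contains a family $\mathcal{F}$ of $\Omega_k(m^{(k-1)/2})$ copies of $K_{k-1}$. I then iteratively extract vertices $w_1,\ldots,w_{k-2}$ from $W$: at the $i$-th step, writing $W_{i-1}:=W\cap\bigcap_{j<i}N(w_j)$ and using $|W_{i-1}|\le\sqrt{2m}$, the leftover family contains $\Omega_k(m^{(k-i)/2})$ copies of $K_{k-i}$ in $W_{i-1}$, so an averaging argument yields some $w_i\in W_{i-1}$ lying in $\Omega_k(m^{(k-1-i)/2})$ of those cliques. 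After $k-2$ steps, $S:=\{u,v,w_1,\ldots,w_{k-2}\}$ is a $K_k$ whose common neighborhood $N^*:=N_G(S)$ satisfies $|N^*|\ge c_k\,m^{1/2}$.

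Next, I perform a clique-enlargement dichotomy on $N^*$: as long as some vertex $w$ of the current common neighborhood has more than a calibrated threshold of neighbors inside that neighborhood, I add $w$ to the clique and replace the common neighborhood with $N(w)$ intersected with the old one. Because $\omega(G)=O(\sqrt{m})$, this enlargement terminates after finitely many steps at a clique $K_J$ whose common neighborhood $N^{**}$ has $\Delta(G[N^{**}])$ below threshold; a greedy / Caro--Wei selection then extracts an independent set of size at least $|N^{**}|/(\Delta+1)=\Omega_k(m^{1/2})$ from $N^{**}$, producing $B_{J,t}$ and hence $B_{k,t}$.

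The main obstacle is the calibration in the second step: each enlargement may shrink $|N^{**}|$ substantially, and one must ensure that the final $|N^{**}|$ and the final $\Delta(G[N^{**}])$ jointly deliver $\Omega_k(m^{1/2})$ at termination. A cleaner alternative is to induct on $k$, with base case $k=2$ being Nikiforov's book theorem; the inductive step would rest on a spectral averaging lemma producing a vertex $v^*$ whose induced neighborhood $G[N(v^*)]$ has $\Omega_k(m)$ edges and inherits the shifted spectral condition $\lambda^2(G[N(v^*)])>(1-1/(k-1))\cdot 2e(G[N(v^*)])$; applying the inductive hypothesis inside $G[N(v^*)]$ and prepending $v^*$ furnishes $B_{k,t}$. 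Optimality of the exponent $1/2$ is witnessed by adding $\lceil\varepsilon n\rceil$ extra edges within one part of the Tur\'an graph $T_{n,k}$: this configuration marginally violates $\lambda^2\le(1-1/k)2m$ but admits no copy of $B_{k,t}$ with $t=\omega(m^{1/2})$.
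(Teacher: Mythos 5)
The paper does not prove Theorem~\ref{thm-LLZ2024}; it is imported verbatim from Li--Liu--Zhang~\cite{LLZ2024-book-4-cycle}. What the paper does prove is the $q$-analogue, Theorem~\ref{thm-genelized-books}, and its argument has exactly the shape you chose: invoke the joints theorem to get an edge $\{u,v\}$ with $\Omega_k(\cdot)$ copies of $K_{k-1}$ inside $W=N(u)\cap N(v)$, then pigeonhole a $K_{k-2}$ inside $W$ lying in many of them, and prepend $\{u,v\}$. So the strategy is the right one; the problem is that your execution imports the wrong denominator.

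The pivotal claim $|W_{i-1}|\le\sqrt{2m}$ is asserted without justification and is simply false in an $m$-edge graph: $W_{i-1}$ is contained in a neighborhood such as $N(u)$, and $d(u)$ (hence $|W|$) can be linear in $m$, not $O(\sqrt{m})$. In the $q$-version the paper proves, the pigeonhole denominator is instead the number of $K_{k-2}$'s bounded by $n^{k-2}$, and the conclusion is $\Omega_k(n)$; when one transposes to the $m$-edge setting, the natural replacement is the Kruskal--Katona bound: an $m$-edge graph has $O_k(m^{(k-2)/2})$ copies of $K_{k-2}$ \emph{provided} $k-2\ge 2$. That repairs your averaging for $k\ge 4$ without any claim on $|W|$, but for $k=3$ the denominator would be $|W|$ itself (the number of $K_1$'s), and no $O(\sqrt{m})$ bound on $|W|$ is available from the hypotheses. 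You would need a genuinely different ingredient there --- for instance, a second, ``local'' application of the spectral hypothesis inside $G[W]$, or a structural fact about $G[W]$ beyond its edge count. This is exactly where your second, vaguely described ``spectral averaging lemma'' would have to do real work; as written it is an unproved assertion, and it is not a standard lemma.

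Two further remarks. First, the entire third paragraph (clique enlargement plus Caro--Wei to extract an independent set) is unnecessary: $B_{k,t}=K_k\vee I_t$ need only appear as a (not necessarily induced) subgraph, so any $t$ common neighbors of a $K_k$ already furnish a copy of $B_{k,t}$, independently of whether those $t$ vertices are independent in $G$. Second, for the base case $k=2$ the statement $bk(G)=\Omega(\sqrt{m})$ is the Li--Liu--Zhang improvement, not Nikiforov's $\Omega(m^{1/4})$ theorem; but since $js_3(G)=bk(G)$, the $k=2$ case already follows from Theorem~\ref{thm-jsr+1}, so you do not need to cite a separate book theorem. Your optimality construction is correct.
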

    
As pointed out in \cite{LLZ2024-book-4-cycle}, 
Theorem \ref{thm-LLZ2024} implies that if $G$ is an $n$-vertex graph with the following condition: 
either $m> (1-\frac{1}{k})\frac{n^2}{2}$ edges or 
$\lambda (G)> (1-\frac{1}{k})n$, 
then $G$ contains a copy of $B_{k,t}$ 
with $t=\Omega_k(n)$. This can be regarded as an extension of a result of Zhai and Lin \cite{ZL2022jgt}, in which they obtained that if $\lambda (G) > \lambda (T_{n,2})$, then $G$ contains a book $B_{2,t}$ with $t=\Omega(n)$.

Motivated by Theorem \ref{thm-LLZ2024}, we present the following variant.

\begin{theorem} \label{thm-genelized-books}
If $k\ge 3$ and $G$ is an $n$-vertex graph with 
\[ q(G) \ge q(T_{n,k}), \] 
then $G$ has a copy of $B_{k,t}$ with $t= \Omega_k (n)$, unless $G=T_{n,k}$. 
\end{theorem}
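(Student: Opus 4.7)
The plan is to deduce Theorem~\ref{thm-genelized-books} directly from the joint-size estimate of Theorem~\ref{thm-q-js} via a standard double-counting argument inside the common neighborhood of a heavy edge; essentially no fresh spectral input is needed. Assume $G\neq T_{n,k}$. I would first invoke Theorem~\ref{thm-q-js} to obtain an edge $uv\in E(G)$ lying in $s := js_{k+1}(G)=\Omega_k(n^{k-1})$ copies of $K_{k+1}$. Setting $W := N_G(u)\cap N_G(v)$ and $H := G[W]$, each $K_{k+1}$ through $uv$ is uniquely determined by the $(k-1)$-set of its vertices other than $u,v$, and that $(k-1)$-set is a copy of $K_{k-1}$ in $H$; conversely every $K_{k-1}$ in $H$ arises this way. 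Hence $H$ contains exactly $s$ copies of $K_{k-1}$.

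The next step is to average over $(k-2)$-subcliques of $H$. Each copy of $K_{k-1}$ in $H$ contains $k-1$ copies of $K_{k-2}$, and each copy $S$ of $K_{k-2}$ in $H$ extends to a copy of $K_{k-1}$ in $H$ by appending any vertex of its common neighborhood
\[
N_H(S):=\{\, x\in V(H)\setminus S : x\sim y \text{ for every } y\in S\,\}.
\]
Double counting therefore yields $\sum_{S}|N_H(S)|=(k-1)\, s=\Omega_k(n^{k-1})$, where $S$ ranges over all copies of $K_{k-2}$ in $H$. Since $|W|\le n$, the number of such $S$ is at most $\binom{n}{k-2}=O_k(n^{k-2})$, so pigeonhole produces a copy $S^*$ of $K_{k-2}$ in $H$ with $|N_H(S^*)|=\Omega_k(n)$.

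Finally, I would set $K^* := S^*\cup\{u,v\}$ and verify the book structure. Because $u\sim v$, every vertex of $S^*$ lies in $W=N(u)\cap N(v)$ (so is adjacent to both $u$ and $v$), and $S^*$ is a clique in $H$, the set $K^*$ spans a $K_k$ in $G$. For every $w\in N_H(S^*)$ we have $w\in W$ (so $w\sim u$ and $w\sim v$) and $w$ is adjacent to every vertex of $S^*$, hence to every vertex of $K^*$. Consequently the common neighborhood of $K^*$ in $G$ contains $N_H(S^*)$, producing a copy of $B_{k,t}=K_k\vee I_t$ with $t=\Omega_k(n)$, as desired. The only substantive ingredient is Theorem~\ref{thm-q-js}, so the genuine difficulty sits upstream; once the joint bound is in hand the deduction above is essentially mechanical, and the exception ``$G=T_{n,k}$'' transfers automatically because $T_{n,k}$ is $K_{k+1}$-free and hence contains no copy of $B_{k,1}$.
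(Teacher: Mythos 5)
Your proof is correct and follows essentially the same route as the paper: deduce from Theorem~\ref{thm-q-js} a heavy edge $uv$, pass to $G[N(u)\cap N(v)]$, count $K_{k-1}$ copies versus $K_{k-2}$ copies, apply the pigeonhole principle to find a $K_{k-2}$ with $\Omega_k(n)$ common extensions, and rejoin $\{u,v\}$ to obtain the base $K_k$ of the generalized book. You merely spell out the double-counting step $\sum_S|N_H(S)|=(k-1)s$ in more detail than the paper does, but the argument is the same.
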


\noindent 
{\bf Remark.} 
 The optimality of the bound follows from 
 the construction by adding an edge to a partite set of $T_{n,k}$. So the resulting graph contains a copy of $B_{k,t}$ with $t=\frac{n}{k}$.

The assumption $k\ge 3$ is {\bf required} in Theorems \ref{thm-h3}, \ref{thm-q-js}, and \ref{thm-genelized-books}.
In the case $k=2$, let $s\ge 1$ be fixed and $K_{n-s,s}^+$ be a graph obtained from $K_{n-s,s}$ by embedding an edge to the partite set of size $n-s$. We see that $q(K_{n-s,s}^+) > q(T_{n,2})$, but $K_{n-s,s}^+$ has no copy of $B_{2,t}$ with $t=\Omega (n)$.

\subsection{Stability results via spectral radius}

In 1966, 
Erd\H{o}s \cite{Erd1966Sta1,Erd1966Sta2} and Simonovits \cite{S1968} independently  proved a stronger structural theorem by showing that the Tur\'{a}n problem exhibits a certain stability phenomenon: 
Let $F$ be a graph with $\chi (F)=k+1\ge 3$. 
For every $\varepsilon >0$, 
there exist $\delta >0$ and $n_0$ such that 
if $G$ is an $F$-free graph on $n\ge n_0$ vertices with $e(G)\ge (1- \frac{1}{k} - \delta) \frac{n^2}{2}$, then $G$ can be obtained from $T_{n,k}$ 
by adding and deleting at most $\varepsilon n^2$ edges. Roughly speaking, if $G$ is an $n$-vertex 
 $F$-free graph for which $e(G)$ is close to $e(T_{n,k})$, then the structure of $G$ must resemble the Tur\'{a}n graph $T_{n,k}$ in an appropriate sense. 
The stability theorem has proven to be an effective method for determining the exact Tur\'{a}n numbers of non-bipartite graphs in recent years; see \cite[Sec. 5]{Kee2011} and references therein.

 In 2009, Nikiforov \cite{Niki2009jgt} extended the aforementioned stability result of Erd\H{o}s and Simonovits in terms of the adjacency spectral radius of a graph and proved the following result.

\begin{theorem}[Nikiforov \cite{Niki2009jgt}] \label{thm-Nik-stability}
Let $k\ge 2$ and $F$ be a graph with $\chi (F)=k+1$. For every $\varepsilon >0$,  
there exist $\delta >0$ and $n_0$ such that 
if $G$ is an $F$-free graph on $n\ge n_0$ vertices with 
$\lambda(G)\ge (1- \frac{1}{k} - \delta) n$, 
 then $G$ can be obtained from $T_{n,k}$ 
by adding and deleting at most $\varepsilon n^2$ edges.  
\end{theorem}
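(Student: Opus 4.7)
The plan is to bridge the spectral hypothesis to the classical Erd\H{o}s--Simonovits combinatorial stability theorem via a Perron-eigenvector-driven vertex-pruning argument; a regularity-lemma alternative would also work, but the pruning keeps the proof elementary. First, I would invoke the spectral Erd\H{o}s--Stone--Simonovits theorem (Theorem \ref{eq-a-ESS}) on the $F$-free graph $G$ to get $\lambda(G) \le (1-\tfrac{1}{k}+\eta)n$ for any fixed $\eta>0$ and large $n$. Combined with the hypothesis $\lambda(G) \ge (1-\tfrac{1}{k}-\delta)n$, this pins the normalised spectral radius $\lambda(G)/n$ into a narrow window around the Tur\'{a}n density $1-1/k$.

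Next, I would let $\mathbf{x}$ be the unit nonnegative Perron eigenvector of $A(G)$ and use the eigenvalue equation $\lambda x_v = \sum_{u \sim v} x_u$ with the Cauchy--Schwarz inequality to derive the pointwise bound
\begin{equation*}
x_v \le \frac{\sqrt{d(v)}}{\lambda}, \qquad v \in V(G),
\end{equation*}
which implies that low-degree vertices carry small eigenvector weight. Then I would prune iteratively: repeatedly delete any vertex of current degree below $(1-\tfrac{1}{k}-\eta)n$, producing a subgraph $G'$ on $n'$ vertices with $\delta(G') \ge (1-\tfrac{1}{k}-\eta)n'$. To control the total number of pruned vertices $|A| = n - n'$, I would set $\sigma = \sum_{v \in A} x_v^{2}$ and decompose $\mathbf{x}^{T}A(G)\mathbf{x}$ according to whether edges touch $A$, obtaining $\lambda(G') \ge \lambda(G)(1-2\sigma)/(1-\sigma)$. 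Combining the pointwise bound with the degree threshold bounds $\sigma$ by $O(|A|/n)$; matching against the upper bound $\lambda(G') \le (1-\tfrac{1}{k}+\eta)n'$ from Theorem \ref{eq-a-ESS} applied to $G'$ forces $|A| = o_{\delta,\eta}(n)$.

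With $G'$ in hand---$F$-free on $n' = (1-o_{\delta,\eta}(1))n$ vertices with $\delta(G') \ge (1-\tfrac{1}{k}-\eta)n'$---I would apply the classical Erd\H{o}s--Simonovits combinatorial stability theorem to conclude that $G'$ can be transformed into $T_{n',k}$ by modifying at most $(\varepsilon/2)n^{2}$ edges. Re-inserting the pruned vertices contributes at most $|A|\cdot n = o_{\delta,\eta}(n^{2})$ further modifications, so choosing $\delta$ and $\eta$ small in terms of $\varepsilon$ gives total edit distance at most $\varepsilon n^{2}$, as required.

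The main obstacle will be the calibration inside the pruning step. Removing a vertex $v$ both decreases $\lambda$ (by an amount governed by $x_v^{2}$) and decreases $n$, so the ratio $\lambda/n'$ can drift in either direction, and the one-shot spectral sandwich does not directly bound $|A|$. The resolution uses that pruned vertices satisfy $x_v = O(1/\sqrt{n})$, so their total eigenvector mass is $O(|A|/n)$; performing the pruning iteratively in passes rather than all at once, and reapplying the $F$-free spectral upper bound on each residual subgraph, makes the eigenvector bookkeeping self-correcting and closes the argument.
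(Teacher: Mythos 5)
Your overall plan — prune to obtain a large-minimum-degree induced subgraph, then hand off to the Erd\H{o}s--Simonovits combinatorial stability theorem — matches the paper's strategy for the $q$-analogue (Theorem \ref{sst}, proved via Theorem \ref{h1-two-cases}), and is also the structure of Nikiforov's original argument. However, there is a genuine gap in the calibration step, and it is precisely the one you flagged as the ``main obstacle.''

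The problem is that the pointwise bound $x_v\le \sqrt{d(v)}/\lambda$ is not strong enough to make the sandwich close. For a vertex $v$ with $d(v)\le (1-\tfrac1k-\eta)n$ and $\lambda \ge (1-\tfrac1k-\delta)n$, your bound gives
\[
x_v^2 \le \frac{d(v)}{\lambda^2}\le \frac{(1-\tfrac1k-\eta)}{(1-\tfrac1k-\delta)^2}\cdot\frac{1}{n}\approx\frac{1}{(1-\tfrac1k)}\cdot\frac1n,
\]
and the prefactor $1/(1-\tfrac1k)=k/(k-1)$ is strictly \emph{greater} than $1$. Since the average squared entry of a unit eigenvector is exactly $1/n$, this only says a low-degree vertex carries at most about $k/(k-1)$ times the average mass — it does \emph{not} say it carries less than average. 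Plugging $\sigma \le c_1 a$ with $c_1\approx k/(k-1)>1$ and $a=|A|/n$ into the sandwich $\lambda(G)\tfrac{1-2\sigma}{1-\sigma}\le \lambda(G')\le (1-\tfrac1k+\eta)n'$ yields, to leading order, $t(a-\sigma)\le \eta+\delta$ with $t=1-\tfrac1k$; but $a-\sigma$ may be as negative as $a(1-c_1)<0$, so the inequality is vacuous and $|A|$ is not forced to be $o(n)$. The iterative/multi-pass fix you sketch does not repair this, because the degree-based bound is intrinsically the wrong tool: a low-degree vertex can have above-average eigenvector weight by being attached to high-weight neighbors.

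What the paper (Theorem \ref{h1-two-cases}, via Lemmas \ref{min} and \ref{sdf}) does differently is crucial: it deletes, at each step, the vertex of \emph{minimum} eigenvector entry of the \emph{current} subgraph, recomputing the eigenvector each time. Lemma \ref{min} shows that when the minimum degree is at most $(\gamma-\alpha)(n-i)$ while $q(G_i)$ is large, the minimum entry obeys $x_u^2(n-i)\le 1-\alpha<1$, i.e.\ it is a fixed factor \emph{below} average — this is exactly the strict inequality your degree bound fails to supply. Lemma \ref{sdf} then quantifies how deleting that particular vertex can only increase the normalized spectral radius $q(G_i)/|G_i|$ by a controlled amount, and after $\lfloor\beta n\rfloor$ steps either the minimum degree has grown past the threshold (case (ii)), or the normalized spectral radius has risen enough to contradict the spectral Erd\H{o}s--Stone--Simonovits bound (case (i)). A secondary, lesser issue in your write-up is that pruned vertices' degrees are measured in the current residual graph, not in $G$, so $d_G(v)$ need not be below the threshold; this is handled automatically when the pruning is driven by the eigenvector of the current subgraph rather than a fixed eigenvector of $G$. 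To close the gap you would need to replace the degree-driven, fixed-eigenvector pruning with the min-eigenvector-entry pruning and the associated per-step accounting.
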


The spectral stability theorem has emerged as a pivotal and fundamental tool in spectral extremal graph theory. Its significance is underscored by its critical role in resolving numerous spectral extremal problems, as evidenced by recent applications in works such as \cite{DKLNTW2021, FTZ2024, NWK2022, Wang2022}. 

We give an extension of Theorem \ref{thm-Nik-stability} in terms of the signless Laplacian spectral radius.

\begin{theorem}\label{sst}
Let $k\ge 3$ and $F$ be a graph with $\chi(F)=k+1$. For every $\varepsilon>0$, there exist $\sigma>0$ and $n_{0}$ such that
if $G$ is an $F$-free graph on $n\geq n_{0}$ vertices with $q(G)\geq2(1- \frac{1}{k}-\sigma)n$,  then $G$ can be obtained from $T_{n,k}$ 
by adding and deleting at most $\varepsilon n^2$ edges. 
\end{theorem}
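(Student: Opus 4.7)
The plan is to reduce Theorem \ref{sst} to the classical Erd\H{o}s--Simonovits stability theorem by first establishing that $q(G) \ge 2(1 - \tfrac{1}{k} - \sigma)n$ forces $e(G) \ge (1 - \tfrac{1}{k} - o_\sigma(1))\tfrac{n^2}{2}$, and then directly invoking the classical stability result for $F$-free graphs with $\chi(F) = k+1$. Let $\mathbf{x}$ be the Perron eigenvector of $Q(G)$ normalized so that $\|\mathbf{x}\|_2 = 1$. Then $q(G) = \sum_{uv \in E(G)}(x_u + x_v)^2$ and, for each vertex $v$, we have the eigen-equation $q(G)\, x_v = d(v)\, x_v + \sum_{u \sim v} x_u$. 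Summing this equation over $v \in V(G)$ gives the useful identity $\sum_v d(v)\, x_v = \tfrac{1}{2}\, q(G)\, \|\mathbf{x}\|_1$. Combining this identity with the Cauchy--Schwarz inequality applied to $q(G) = \sum_{uv \in E(G)}(x_u + x_v)^2$ yields
\[
  e(G) \;\ge\; \frac{q(G)\, \|\mathbf{x}\|_1^2}{4}.
\]
It therefore suffices to prove the near-uniformity of $\mathbf{x}$, namely, $\|\mathbf{x}\|_1^2 \ge (1 - o_\sigma(1))\, n$.

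To establish this near-uniformity, I would use two ingredients. First, the identity $(x_u + x_v)^2 = 2(x_u^2 + x_v^2) - (x_u - x_v)^2$ gives
\[
  \sum_{uv \in E(G)} (x_u - x_v)^2 \;=\; 2 \sum_v d(v)\, x_v^2 \;-\; q(G) \;\le\; 2 \Delta(G) - q(G) \;\le\; \bigl(\tfrac{2}{k} + 2\sigma\bigr) n + O(1),
\]
which bounds the total variation of $\mathbf{x}$ along edges. Second, the eigen-equation can be recast pointwise as $d(v) = q(G)\, x_v / (x_v + \overline{x}(v))$, where $\overline{x}(v) := d(v)^{-1} \sum_{u \sim v} x_u$ is the neighborhood mean of $\mathbf{x}$ at $v$; this directly links local degree variations to local eigenvector variations. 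Together they force $\mathbf{x}$ to be approximately constant on a subset of vertices of almost full $\ell_2$-mass. An iterative vertex-deletion argument, which tracks the drop in $q$ via the Rayleigh-quotient bound
\[
  q(G - v) \;\ge\; \frac{q(G) - \sum_{u \sim v}(x_u + x_v)^2}{1 - x_v^2},
\]
then confines atypical vertices (those with $x_v$ or $d(v)$ far from typical) to an exceptional set whose removal costs only $o_\sigma(1) \cdot n$ in $\|\mathbf{x}\|_1^2$.

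Combining the two steps yields $e(G) \ge (1 - \tfrac{1}{k} - \tau)\tfrac{n^2}{2}$ for some $\tau = \tau(\sigma) \to 0$ as $\sigma \to 0$. Since $G$ is $F$-free with $\chi(F) = k+1$, the classical Erd\H{o}s--Simonovits stability theorem then implies that $G$ can be obtained from $T_{n,k}$ by adding and deleting at most $\varepsilon n^2$ edges, provided $\sigma$ is chosen small enough (and $n_0$ large enough) in terms of $\varepsilon$.

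The main obstacle is the near-uniformity step. In the adjacency setting of Theorem \ref{thm-Nik-stability}, the equation $\lambda(G)\, x_v = \sum_{u \sim v} x_u$ is a clean averaging identity whose spectral consequences are relatively transparent; in the signless Laplacian case, the additional diagonal degree term couples local degrees and local eigenvector values in a subtler way, so extracting a clean concentration statement for $\mathbf{x}$ is more delicate. The assumption $k \ge 3$ is essential here: the hypothesis $q(G) \ge 2(1 - \tfrac{1}{k} - \sigma)n$ then strictly exceeds $n$, ruling out highly concentrated Perron vectors realized by near-bipartite configurations such as $K_{s, n-s}$ (for which $q = n$ but the Perron vector is far from uniform). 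This is precisely the regime that would obstruct a stability conclusion when $k = 2$.
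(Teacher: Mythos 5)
Your proposal aims to deduce the edge bound $e(G)\ge (1-\tfrac1k-o_\sigma(1))\tfrac{n^2}{2}$ directly from $q(G)\ge 2(1-\tfrac1k-\sigma)n$, without invoking the $F$-free hypothesis until the very last step, and then to appeal to the classical Erd\H{o}s--Simonovits stability. The crucial intermediate claim is the near-uniformity $\|\mathbf{x}\|_1^2\ge (1-o_\sigma(1))n$. This step is where the argument breaks down: \emph{without} the $F$-free assumption, the implication is simply false for $k\ge 3$. Take $G=K_t\vee \overline{K}_{n-t}$ (a clique of size $t=cn$ joined to an independent set of size $(1-c)n$). A routine two-block computation of the Perron pair gives $q(G)\approx \alpha n$ where $\alpha$ solves $\alpha^2-(2c+1)\alpha+2c^2=0$; taking $c\approx 0.2$ yields $\alpha\approx 1.34>\tfrac{4}{3}$, so $q(G)>2(1-\tfrac13)n$, yet $e(G)=\binom{t}{2}+t(n-t)\approx 0.18\,n^2$, far below $\tfrac{n^2}{3}$. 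Moreover, on this graph the Perron vector satisfies $b=\tfrac{c}{\alpha-c}a\approx 0.17\,a$ on the independent-set block, so $\|\mathbf{x}\|_1^2/n$ is bounded away from $1$; the near-uniformity you want is violated. Your intuition that $k\ge3$ (i.e.\ $q>n$) rules out such concentrated Perron vectors is therefore not correct.

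The underlying issue is that the $F$-free hypothesis must enter \emph{twice}, not once. The paper's proof does exactly this: it applies the iterative vertex-deletion reduction (Theorem \ref{h1-two-cases}) to split into two cases, and in the case where the $q$-ratio of the subgraph increases substantially it invokes the $Q$-spectral Erd\H{o}s--Stone--Simonovits bound (Theorem \ref{eq-q-ESS}) on the $F$-free subgraph $H$ to derive a contradiction; only the other case (large minimum degree, hence many edges) feeds into the classical edge stability. Your sketch does contain the right mechanism in embryo --- the iterative deletion tracking the drop in $q$ via the Rayleigh quotient is essentially the engine of Theorem \ref{h1-two-cases} --- but as written it misses that you must use $F$-freeness of the resulting subgraph, via the $Q$-spectral Erd\H{o}s--Stone--Simonovits theorem, to close off the alternative in which the deletion process inflates the spectral ratio instead of raising the minimum degree. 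Without that ingredient, the reduction to an edge-count bound cannot be completed.
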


\noindent 
{\bf Remark.}
Theorem \ref{sst} does {\bf not} hold for tri-partite graphs $F$. Indeed, we take $F=C_{2k+1}$ 
or $F_k$ for every $k\ge 2$, where $C_{2k+1}$ is an odd cycle of order $2k+1$, and $F_k$ is the friendship graph that consists of $k$ triangles intersecting in a common vertex. 
Note that $\chi (C_{2k+1})=\chi (F_k)=3$. 
Let $S_{n,k}$ be the graph consisting of a clique on $k$ vertices and an independent set on $n-k$ vertices in which each vertex of the clique is joined to each vertex of the independent set. 
We see that $S_{n,k}$ is $C_{2k+1}$-free and $F_k$-free, and $q(S_{n,k}) \ge (\frac{1}{2} - o(1))2n$. However, $S_{n,k}$ can not be obtained from 
$T_{n,2}$ by adding and deleting $o(n^2)$ edges, since  $e(S_{n,k})={k \choose 2} + k(n-k)$ and $e(T_{n,2})= \lfloor n^2/4 \rfloor$.

\subsection{Application I: Degree powers in graphs}

 Given a real number $p\ge 1$, the degree power of $G$ is defined as $\sum_{v\in V(G)} d^p(v)$. 
 In the case $p=1$,
we have $\sum_{v\in V(G)} d(v) =2e(G)$.
In the case $p=2$, the value $\sum_{v\in V(G)} d^2(v)$ is referred to as the {\it first Zagreb index} or {\it $\ell_2$-norm}; see, e.g., \cite{LL2009-Zagreb}. 
Motivated by the study of Tur\'{a}n problem, there has been a wide investigation on estimating the degree power of an $F$-free graph $G$. 
For instance, Gerbner \cite{G2025} found an interesting connection between degree powers and counting stars in an $F$-free graph, which yields a new proof of a theorem of Nikiforov \cite{Niki2009-degree}.   
We refer to  \cite{BN2004,BN2012,CY2000,LLQ2019,Niki2009-degree,Zhang2022}.

On the study of Ramsey theory,
Nikiforov and Rousseau \cite{NR2004} extended the  classical Tur\'{a}n theorem by proving that if $G$ is a $K_{k+1}$-free graph on
$n$ vertices with $m$ edges, then
\begin{equation}    \label{thm-NR-degree}
\sum_{v\in V(G)} d^2(v) \le 2\left( 1-\frac{1}{k}\right)mn.
\end{equation}
The original proof of (\ref{thm-NR-degree}) uses the combinatorial technique.
Recently,  Li, Liu and Zhang \cite[Sec. 5.4]{LLZ2024-book-4-cycle} provided two spectral proofs of (\ref{thm-NR-degree}).
Using a similar line of their proofs, 
Zheng, Li and Li \cite{ZLL2025} extended (\ref{thm-NR-degree}) to $F$-free graphs for any color-critical graph $F$. In this section, 
we can give an asymptotic bound on the sum of squares of degrees in an $F$-free graph for a general graph $F$.

\begin{theorem} \label{thm-2-14}
Let $k\ge 2$ and $F$ be a graph with chromatic number $\chi(F)= k+1$.
If $G$ is an $F$-free graph on
$n$ vertices with $m$ edges, then
\[ \sum_{v\in V(G)} d^2(v) \le 2\left( 1-\frac{1}{k} + o(1)\right)mn. \]  
\end{theorem}

\noindent
{\bf Remark.} 
The above bound is tight when we take $G$ as any  complete bipartite graph for $k=2$, or $G$ is the 
balanced complete $k$-partite graph $T_{n,k}$ for every $k\ge 3$. 
 Theorem \ref{thm-2-14} implies the Erd\H{o}s--Stone--Simonovits bound in Theorem \ref{eq-ESS}. 
In fact, using the Cauchy--Schwarz inequality, we have
$\sum_{v\in V} d^2(v) \ge \frac{1}{n} 
\left(\sum_{v\in V} d(v) \right)^2 = \frac{4m^2}{n}$, which together with 
Theorem \ref{thm-2-14} yields $\frac{4m^2}{n} \le 2\left(1- \frac{1}{k} + o(1) \right)mn$. Then we get  
$m\le \left(1- \frac{1}{k} + o(1) \right) \frac{n^2}{2}$, as expected. 

\medskip 
In addition, Theorem \ref{thm-2-14} also implies the following corollary.

\begin{coro} \label{coro-1-7}
Let $k\ge 2$ and $F$ be a graph with chromatic number $\chi(F) =k+1$.
If $G$ is an $F$-free graph on
$n$ vertices, then
\[ \sum_{v\in V(G)} d^2(v) \le \left( 1-\frac{1}{k} + o(1)\right)^2 n^3. \] 
\end{coro}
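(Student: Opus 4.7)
The plan is to derive Corollary \ref{coro-1-7} as a direct consequence of Theorem \ref{thm-2-14} by composing it with the classical Erd\H{o}s--Stone--Simonovits bound on $m=e(G)$. Since $F$ has chromatic number $k+1$, Theorem \ref{eq-ESS} gives us immediately that any $F$-free graph $G$ on $n$ vertices satisfies $m \le (1-\frac{1}{k}+o(1))\frac{n^2}{2}$. On the other hand, Theorem \ref{thm-2-14} tells us that an $F$-free graph also satisfies $\sum_{v\in V(G)} d^2(v) \le 2(1-\frac{1}{k}+o(1))mn$.

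The key step is then to substitute the first estimate into the second. First, I would fix $\varepsilon>0$ and choose $n_0$ large enough that both error terms are bounded by $\varepsilon$ for $n\ge n_0$. Writing $c_k := 1-\frac{1}{k}$, this gives
\[
\sum_{v\in V(G)} d^2(v) \le 2(c_k+\varepsilon)\,m\,n \le 2(c_k+\varepsilon)\cdot (c_k+\varepsilon)\frac{n^2}{2}\cdot n = (c_k+\varepsilon)^2 n^3,
\]
and since $\varepsilon$ was arbitrary we obtain the claimed bound $\sum_{v\in V(G)}d^2(v) \le (1-\frac{1}{k}+o(1))^2 n^3$.

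For the tightness assertion, I would take $G=T_{n,k}$ and explicitly compute the degree sequence. The Tur\'an graph $T_{n,k}$ has parts of sizes $\lfloor n/k\rfloor$ or $\lceil n/k\rceil$, so every vertex has degree either $n-\lfloor n/k\rfloor$ or $n-\lceil n/k\rceil$, both of which equal $(1-\frac{1}{k})n + O(1)$. Therefore $\sum_{v\in V(T_{n,k})} d^2(v) = n\bigl((1-\frac{1}{k})n + O(1)\bigr)^2 = (1-\frac{1}{k})^2 n^3 + O(n^2)$, matching the upper bound asymptotically. Since $T_{n,k}$ is $K_{k+1}$-free and hence $F$-free whenever $\chi(F)=k+1$ (as $F\not\subseteq T_{n,k}$ would force $\chi(F)\le k$, contradicting the hypothesis), this confirms that the bound is sharp.

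There is no real obstacle here beyond bookkeeping of the $o(1)$ terms, since both ingredients are already in place: the proof is essentially a one-line combination of Theorem \ref{thm-2-14} and Theorem \ref{eq-ESS}. The only subtlety worth flagging is that the same asymptotic $o(1)$ appears in both factors, so one must be a little careful in merging them into a single $(1-\frac{1}{k}+o(1))^2$ expression, but this is purely formal.
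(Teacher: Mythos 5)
Your derivation is correct and is essentially what the paper intends: the corollary is stated as an immediate consequence of Theorem \ref{thm-2-14}, and combining that theorem with the bound $m\le(1-\frac1k+o(1))\frac{n^2}{2}$ gives the result after a straightforward merging of the $o(1)$ terms. The only (cosmetic) difference is that you import the bound on $m$ from Theorem \ref{eq-ESS}, whereas the paper's remark after Theorem \ref{thm-2-14} extracts that same bound directly from Theorem \ref{thm-2-14} itself via Cauchy--Schwarz ($4m^2/n\le\sum d^2(v)\le 2(1-\frac1k+o(1))mn$), keeping the argument self-contained; either route is fine. Your tightness computation for $T_{n,k}$ and the observation that $\chi(F)=k+1$ forces $F\not\subseteq T_{n,k}$ are also correct.
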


\noindent
{\bf Remark.} 
The above bound is tight when we take $G$ as the $k$-partite Tur\'{a}n graph $T_{n,k}$. 
This corollary recovers a result of Bollob\'{a}s and Nikiforov \cite{BN2012} by a quite different method. 
 Corollary \ref{coro-1-7} requires $\chi (F)\ge 3$. 
For forbidding bipartite graphs, we refer the readers to 
\cite{BL2024,G2025,LLQ2019,Niki2009-degree}.

\subsection{Application II: Linear hypergraphs}
A {\it hypergraph} $H=(V,E)$ consists of a vertex set $V=\{v_1,v_2,\ldots,v_n\}$  and an edge set $E=\{e_1,e_2$, $\ldots,e_m\}$, where $e_i \subseteq V$ for $i \in [m]:=\{1,2,\ldots,m\}$.
 Let $|H|:=|V(H)|$ and $e(H):= |E(H)|$ denote  the number of vertices and edges in $H$, respectively. If $|e_i|=r$ for each $i \in [m]$ and $r \geq2$, then $H$ is called an  {\it $r$-uniform} hypergraph ({$r$}-graph, for short). A hypergraph is called {\it linear} if any two edges intersect in at most one vertex.   
 The adjacency spectral problems for (linear) hypergraphs were studied in \cite{GC2021,GCH2022,HCC2021,KLM2014}.  
  The  {\it $r$-expansion}  of a $2$-graph $F$ is the $r$-graph $F^{r}$ obtained from $F$ by enlarging each edge of $F$ with $(r-2)$ new vertices disjoint from $V(F)$ such that distinct edges of $F$ are enlarged by distinct vertices; 
 see, e.g., \cite{Ger2025,SFKH2022,SFK2025} for related results.   
 
  In this section, we shall study the signless Laplacian spectral Tur\'an-type problem of linear hypergraphs. 
Given a vector $\mathbf{x}=(x_{1},\ldots,x_{n})\in \mathbb{R}^{n}$, the $r$-norm of $\mathbf{x}$ is denoted by  $\|\mathbf{x}\|_{r}:=(|x_{1}|^{r}+\cdots +|x_{n}|^{r})^{1/r}$. 
Given an $r$-graph $H$ on $n$ vertices and an edge $e\in E(H)$, we denote $x^{e}:=\prod_{v\in e}x_{v}$. The {signless Laplacian spectral radius} of a $r$-graph $H$ is defined as
$$q(H):=\max_{\|\mathbf{x}\|_{r}=1} \left( \sum_{v\in V(H)}d_{H}(v)x_{v}^{r}+  r\sum_{e\in E(H)}x^{e}\right).$$ 
If $\mathbf{x}\in \mathbb{R}^{n}$ is a vector such that $\|\mathbf{x}\|_{r}=1$ and
$q(H)=\sum_{v\in V(H)}d_{H}(v)x_{v}^{r}+  r\sum_{e\in E(H)}x^{e}$, then $\mathbf{x}$ is called an  {eigenvector} of $H$ corresponding to $q(H)$. Clearly, there always exists a nonnegative eigenvector corresponding to $q(H)$. 
 {By establishing  connections between the signless Laplacian spectral radius of a linear hypergraph and its $2$-shadow graph,} 
 we determine the upper bound on $q(H)$ where $H$ does not contain an $r$-expansion of a complete multipartite graph.

\begin{theorem}\label{lhslES}
Let $k\geq 2$, $\ell \ge 1$, $r\geq2$, $0<\varepsilon<1/(k-1)$ and $n$ be sufficiently large. If $H$ is a linear $r$-uniform hypergraph with $n$ vertices satisfying
$$q(H)\geq \frac{2n}{r-1}\bigg(1-\frac{1}{k}+ \varepsilon \bigg),$$
then $H$ contains an $r$-expansion of $K_{k+1}[\ell]$.
\end{theorem}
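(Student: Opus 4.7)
The plan is to reduce the hypergraph statement to the $2$-uniform setting via the shadow graph of $H$, apply Theorem~\ref{lsES} on the resulting $2$-graph, and then lift the graph blowup back to an $r$-expansion in $H$ using linearity. Let $G$ denote the $2$-shadow of $H$: $V(G)=V(H)$, and $uv\in E(G)$ iff $\{u,v\}\subseteq e$ for some $e\in E(H)$. Because $H$ is linear, each such pair lies in exactly one hyperedge $e_{uv}$, and $d_G(v)=(r-1)d_H(v)$ for every vertex $v$.

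First I would establish the spectral comparison $q(G)\ge (r-1)\,q(H)$. Take a nonnegative eigenvector $\mathbf{x}$ of $H$ with $\|\mathbf{x}\|_r=1$ and test $Q(G)$ against $\mathbf{y}$ defined by $y_v:=x_v^{r/2}$, which is a unit vector in $\ell^2$. The diagonal contribution matches immediately using $d_G=(r-1)d_H$. For the off-diagonal part, the key trick is AM--GM applied inside each hyperedge to the $\binom{r}{2}$ quantities $\{(x_ux_v)^{r/2}:\{u,v\}\subseteq e\}$, whose geometric mean equals $x^e=\prod_{v\in e}x_v$; summing over $e$ and using linearity to identify the resulting double sum with $\sum_{uv\in E(G)}y_uy_v$ gives
\[
r\sum_{e\in E(H)} x^e \;\le\; \frac{2}{r-1}\sum_{uv\in E(G)} y_u y_v.
\]
Combining the two contributions yields $q(H)\le \tfrac{1}{r-1}\bigl(\mathbf{y}^{\top}Q(G)\mathbf{y}\bigr)\le q(G)/(r-1)$, and hence under the hypothesis $q(G)\ge 2n(1-\tfrac{1}{k}+\varepsilon)$.

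Next, Theorem~\ref{lsES} delivers a copy of $K_{k+1}[T]$ in $G$ with $T=\Omega_{k,\varepsilon}(\log n)$. The final step is to turn this large blowup into an $r$-expansion of $K_{k+1}[\ell]$ in $H$. Writing $V_1,\ldots,V_{k+1}$ for the parts (each of size $T$), every edge $uv$ of the blowup determines by linearity a unique hyperedge $e_{uv}$ whose $r-2$ \emph{extras} $e_{uv}\setminus\{u,v\}$ are the vertices that must be verified disjoint. I would choose $W_i\subseteq V_i$ of size $\ell$ greedily, at each stage discarding those candidate vertices whose associated hyperedges would place an extra on an already-committed vertex (either a core vertex in some $W_j$ or an extra of a previously chosen hyperedge). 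Linearity forces any committed pair of vertices to lie in at most one hyperedge, so each conflict forbids only $O_{k,r,\ell}(1)$ candidates; thus once $T$ exceeds a constant $C=C(k,r,\ell)$, which holds for large $n$, the greedy selection completes and produces the required expansion.

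The main obstacle is the spectral comparison $q(G)\ge (r-1)\,q(H)$, because passing from the $r$-homogeneous form defining $q(H)$ to the quadratic form of $Q(G)$ requires a careful AM--GM argument together with an essential use of linearity to identify shadow edges with hyperedges; in particular, any slack in this step would worsen the hypothesis needed on $q(H)$. Once the inequality is in place, Theorem~\ref{lsES} does the heavy lifting and the greedy embedding step is routine.
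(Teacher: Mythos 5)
Your proof is correct and follows the same three-step route as the paper: a spectral comparison $q(\partial H)\ge (r-1)\,q(H)$ obtained by testing $Q(\partial H)$ against $y_v=x_v^{r/2}$, an application of Theorem~\ref{lsES} to the shadow $\partial H$, and a linearity-based lifting back to an $r$-expansion (the paper's Lemma~\ref{lem-return}). The only small differences are cosmetic: the paper proves the spectral comparison for the general $s$-shadow using AM--GM followed by the Power-Mean inequality, while your one-step AM--GM applied directly to $(x_ux_v)^{r/2}$ for the $2$-shadow is a mild streamlining of the $s=2$ case, and the paper cites the greedy lifting step from prior work rather than sketching it as you do.
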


We define $\mathrm{ex}_{q}^{lin}(n,F)$ to be the maximum signless Laplacian spectral radius of all linear $F$-free $r$-graphs on $n$ vertices. 
By the existence of a specific design (Lemma \ref{design}),  
we present an extension of Theorem \ref{eq-q-ESS} when we forbid an $r$-expansion of a general graph.

\begin{theorem}\label{ESSL}
Let $F$ be a graph with $\chi(F)=k+1$, where $k\geq r\geq2$. Then
$$\mathrm{ex}_{q}^{lin}(n,F^{r})=\frac{2n}{r-1} 
\bigg(1-\frac{1}{k}\bigg)+o(n).$$
\end{theorem}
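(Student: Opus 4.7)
The plan is to prove Theorem~\ref{ESSL} by establishing the matching upper and lower bounds separately. Since $\chi(F)=k+1$, I can first choose an integer $\ell=\ell(F)\ge 1$ with $F\subseteq K_{k+1}[\ell]$. Inserting disjoint new vertices into the corresponding edges then shows that $F^{r}\subseteq (K_{k+1}[\ell])^{r}$, so every $F^{r}$-free hypergraph is automatically $(K_{k+1}[\ell])^{r}$-free.

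For the upper bound I would apply Theorem~\ref{lhslES} in contrapositive form. Fix any $\varepsilon\in(0,1/(k-1))$ and suppose $H$ is a linear $r$-uniform $F^{r}$-free hypergraph on $n$ vertices with
\[
q(H)\ge \frac{2n}{r-1}\Bigl(1-\frac{1}{k}+\varepsilon\Bigr).
\]
Applying Theorem~\ref{lhslES} with this $\ell$ would force $H$ to contain a copy of $(K_{k+1}[\ell])^{r}$, hence a copy of $F^{r}$, contradicting $F^{r}$-freeness. Letting $\varepsilon\downarrow 0$ yields $\mathrm{ex}_{q}^{lin}(n,F^{r})\le \frac{2n}{r-1}(1-\frac{1}{k})+o(n)$.

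For the matching lower bound I would construct an explicit $F^{r}$-free linear $r$-graph whose $Q$-index nearly attains the target. Partition $V=V_{1}\cup\cdots\cup V_{k}$ into parts of size $\lfloor n/k\rfloor$ or $\lceil n/k\rceil$, so that the host graph is (essentially) $T_{n,k}$, and invoke Lemma~\ref{design} to obtain a linear $r$-uniform hypergraph $H_{0}$ on $V$ whose edge set is an approximate $K_{r}$-packing of $T_{n,k}$; its $2$-shadow then lies in $T_{n,k}$ and
\[
e(H_{0})=(1-o(1))\frac{e(T_{n,k})}{\binom{r}{2}}=(1-o(1))\Bigl(1-\frac{1}{k}\Bigr)\frac{n^{2}}{r(r-1)}.
\]
The hypothesis $k\ge r$ enters exactly here, to ensure that $T_{n,k}$ contains the transversal copies of $K_{r}$ into which the hyperedges of $H_{0}$ are packed. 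Because the $2$-shadow is contained in the $k$-partite graph $T_{n,k}$ and $\chi(F)=k+1$, it is $F$-free; any copy of $F^{r}$ in $H_{0}$ would project to a copy of $F$ in the $2$-shadow, so $H_{0}$ is $F^{r}$-free. Evaluating the Rayleigh-type functional defining $q(H_{0})$ at the uniform vector $\mathbf{x}=(n^{-1/r},\ldots,n^{-1/r})$ with $\|\mathbf{x}\|_{r}=1$ yields
\[
q(H_{0})\ge \frac{1}{n}\sum_{v\in V}d_{H_{0}}(v)+\frac{r\,e(H_{0})}{n}=\frac{2r\,e(H_{0})}{n}\ge \Bigl(1-\frac{1}{k}\Bigr)\frac{2n}{r-1}-o(n),
\]
matching the upper bound up to lower-order terms.

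The main obstacle is Lemma~\ref{design}: one needs a near-perfect packing of the edges of $T_{n,k}$ by copies of $K_{r}$, equivalently a linear $r$-graph of almost extremal size whose $2$-shadow is $k$-partite. Such a packing follows from R\"odl's nibble or from Keevash-type approximate design results transplanted to the $k$-partite host $T_{n,k}$, and it is precisely the step that demands $k\ge r$. The remaining points, namely the verification $F^{r}\subseteq (K_{k+1}[\ell])^{r}$ and the handling of minor divisibility or rounding issues when $k\nmid n$, are routine bookkeeping.
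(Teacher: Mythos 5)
Your proof is correct and follows essentially the same two-step approach as the paper: upper bound via Theorem~\ref{lhslES} (using $F\subseteq K_{k+1}[\ell]$, hence $F^r\subseteq(K_{k+1}[\ell])^r$, to reduce to the blow-up case), lower bound via the design from Lemma~\ref{design}. The one substantive difference is in how you evaluate $q(H_0)$: you test the uniform vector $\mathbf{x}=(n^{-1/r},\dots,n^{-1/r})$ to get $q(H_0)\ge \frac{2r\,e(H_0)}{n}$, whereas the paper observes that the $(m,k,r)$-design is $\frac{m(k-1)}{r-1}$-regular and uses the clean fact that a $d$-regular $r$-graph satisfies $q(H)=2d$; both give the same numerical bound. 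You should, however, correct your framing of the lower-bound construction. Lemma~\ref{design} (Moh\'acsy's theorem) already delivers an \emph{exact} group-divisible design, not an approximate $K_r$-packing, so there is no residual obstacle requiring R\"odl's nibble or Keevash-type results; those are strictly unnecessary for this argument. The exact design exists only for $n=km$ with $m$ satisfying the congruences in~\eqref{mod}, but this yields a cofinal set of $n$ with $q(H_0)=\frac{2n}{r-1}(1-\frac{1}{k})$, and the $o(n)$ error term absorbs the interpolation to general $n$, which is the standard bookkeeping you alluded to.
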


Moreover, we derive a sharp upper bound on the  signless Laplacian spectral radius of linear hypergraphs that do not contain an expansion of a color-critical graph; see Theorem \ref{linear hypergraph}.

\paragraph{Our approach.}
 We provide a unified approach to proving the 
 new results stated in this section, including the
 supersaturation and blowup of cliques, the existence of large color-critical graphs, joints and generalized books, and the stability theorem for the signless Laplacian spectral radius.  Our framework primarily relies on two powerful  graph reduction algorithms.  These algorithms are based on iteratively deleting the vertices with the smallest coordinates of the Perron eigenvector corresponding to the signless Laplacian spectral radius. Roughly speaking, for a graph $G$ with large signless Laplacian spectral radius, if it contains a `bad' vertex with small degree, then we can compute that $G$ has a small coordinate of the Perron eigenvector. By iteratively deleting the vertex corresponding to the smallest coordinate, 
 we can obtain a subgraph $H$ that has both a large spectral radius and a large minimum degree; see Theorem \ref{h1} for more details.  
 Leveraging some known results concerning the minimum degree conditions, e.g., Lemmas \ref{lem-super-cliques} -- \ref{est}, we can find the desired substructures in the subgraph $H$.

\paragraph{Organization.} 
In Section \ref{sec-3}, we introduce some useful lemmas involving the minimum degree and the minimum entry of eigenvector. 
Two important results (Theorems \ref{h1} and \ref{h1-two-cases}) will be proved in this section. In Section \ref{sec-4}, we will study the supersaturation and blowup of cliques and provide the proofs of Theorems \ref{thm-1-2} and \ref{lsES}. 
In Section \ref{sec-5}, we study the color-critical graphs, joints and generalized books and present the proofs of Theorems \ref{thm-h3}, \ref{thm-q-js} and \ref{thm-genelized-books}. In Section \ref{sec-6}, we show the proof of the stability in Theorem \ref{sst}. In Section \ref{sec-7}, we give two alternative proof of Theorem \ref{thm-2-14}. 
In Section \ref{sec-8}, we study the signless Laplacian spectral radius of hypergraphs. 
{By establishing a key lemma involving a linear hypergraph and its $s$-shadow graph (Lemma \ref{gj})}, we show the proofs of Theorems \ref{lhslES} and \ref{ESSL}. In Section \ref{sec-9}, we conclude with two interesting conjectures for further research.

\paragraph{Notation.} 
Recall that $K_{k+1}[t]$ denotes the $t$-blowup of the clique $K_{k+1}$, which is a complete $(k+1)$-partite graph with each partite set of size $t$. 
And we write $K_k^+[t]$ for the graph obtained from $K_k[t]$ by embedding an edge to a partite set. 
 For a graph $G$ and a vertex $u\in V(G)$, 
 we write $N (u)$ for the set of neighbors of $u$. The {\it degree} $d(u)$ is the number of neighbors of $u$ in $G$. 
 Let $\delta(G)$ be the {\it minimum degree} of vertices of $G$. 
 The order of $G$, denoted by $|G|$, is the number of vertices of $G$.  
 Let $G-u$ be the subgraph obtained from $G$ by removing $u$ from $V(G)$ and removing all edges containing $u$ from $E(G)$. 
 We write $\mathbf{x}=(x_1,\ldots ,x_n)\in \mathbb{R}^n$ for the nonnegative unit  eigenvector of $G$ corresponding to $q(G)$, where $x_i$ is the $i$-th coordinate corresponding to the vertex $v_i$.

\section{Preliminaries}

\label{sec-3}

\begin{lem}[See \cite{AN2013}]\label{min}
Let $G$ be an $n$-vertex graph with $q(G)=q$ and  $\delta(G)=\delta$. 
If $\mathbf{x}=(x_{1},\ldots,x_{n})$ is a nonnegative unit eigenvector of $q(G)$, then the value $x_u=\min\{x_{1},\ldots,x_{n}\}$ satisfies 
$$x_{u}^{2}({q^{2}-2q\delta+n\delta})\leq \delta.$$
\end{lem}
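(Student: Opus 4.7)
The plan is to combine the signless Laplacian eigenvector equation with Cauchy--Schwarz, exploiting both the unit-norm constraint on $\mathbf{x}$ and the minimality of $x_u$. The key insight is that one should \emph{not} apply the eigenvector equation at the $x$-minimizing vertex $u$ itself, but at an auxiliary vertex $w$ of minimum degree $d(w) = \delta$.

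I would first write out $Q(G)\mathbf{x} = q\mathbf{x}$ at $w$ to obtain $\sum_{v\sim w} x_v = (q - \delta)\, x_w$, then square and apply Cauchy--Schwarz to arrive at $(q-\delta)^2 x_w^2 \le \delta \sum_{v\sim w} x_v^2$. Next I would control the right-hand side from above using that $\mathbf{x}$ is a unit vector and that $x_v \ge x_u$ for every $v$: among the $n - 1 - \delta$ non-neighbors of $w$ distinct from $w$, each contributes at least $x_u^2$ to $\|\mathbf{x}\|_2^2 = 1$, while $w$ itself contributes $x_w^2 \ge x_u^2$. This yields the budget estimate
\[ \sum_{v\sim w} x_v^2 \;\le\; 1 - (n - \delta)\, x_u^2. \]
Substituting back, and replacing the $x_w^2$ on the left with the smaller quantity $x_u^2$ (which is legitimate since the coefficient $(q-\delta)^2$ is nonnegative, as $q \ge 2\delta$), I would obtain
\[ (q-\delta)^2 x_u^2 \;\le\; \delta \bigl[ 1 - (n - \delta)\, x_u^2 \bigr], \]
which after collecting terms is precisely $x_u^2 \bigl[(q-\delta)^2 + \delta(n-\delta)\bigr] \le \delta$. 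A one-line algebraic expansion $(q-\delta)^2 + \delta(n-\delta) = q^2 - 2q\delta + n\delta$ then finishes the proof.

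The main subtlety, and the step I expect a first attempt to stumble on, is the asymmetric role of ``vertex of minimum degree'' versus ``vertex of minimum eigenvector entry''. Applying the eigenvector equation directly at $u$ together with the same Cauchy--Schwarz scheme produces only $x_u^2 [q^2 - 2q\, d(u) + n\, d(u)] \le d(u)$; since the function $d \mapsto d/(q^2 - 2qd + nd)$ is strictly increasing in $d$, this is strictly weaker than the claimed bound whenever $d(u) > \delta$. Routing the eigenvalue equation through a minimum-degree vertex $w$ while invoking the minimality of $x_u$ \emph{only} inside the $\ell_2$-budget bound and in the monotone replacement $x_w \ge x_u$ is what makes $\delta$ appear cleanly on both sides of the final inequality.
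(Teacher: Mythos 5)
Your proof is correct, and it is exactly the standard argument one would expect (and, I believe, the one underlying the cited source): apply the eigenequation $\sum_{v\sim w}x_v = (q-\delta)x_w$ at a vertex $w$ of minimum degree, run Cauchy--Schwarz over $N(w)$, and use the unit-norm budget together with the minimality of $x_u$ both to bound $\sum_{v\sim w}x_v^2 \le 1-(n-\delta)x_u^2$ and to replace $x_w^2$ by $x_u^2$ on the left. You also correctly identify the essential subtlety, namely that applying the eigenequation at $u$ instead of at a minimum-degree vertex gives only $x_u^2[q^2-2q\,d(u)+n\,d(u)]\le d(u)$, which is strictly weaker since $d\mapsto d/(q^2+(n-2q)d)$ is increasing. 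One tiny wrinkle that does not affect correctness: $(q-\delta)^2\ge 0$ holds trivially as a square, so the parenthetical appeal to $q\ge 2\delta$ is unnecessary.
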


\begin{lem}\label{sdf}
Let $G$ be a graph of order $n$. If $\mathbf{x}=(x_{1},\ldots,x_{n})$ is a nonnegative unit  eigenvector of $q(G)$ and 
 $u$ is a vertex for which $x_{u}=\min\{x_{1},\ldots,x_{n}\}$, then
$$\frac{q(G-u)}{n-2}\geq \frac{q(G)}{n-1}\bigg(1+\frac{1-nx_u^{2}}{(n-2)(1-x_u^{2})}\bigg)-\frac{1-nx_u^{2}}{(n-2)(1-x_u^{2})}.$$
\end{lem}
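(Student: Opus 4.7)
The natural strategy is to use the Rayleigh quotient characterisation of $q(G-u)$, feeding in a test vector built from the Perron eigenvector $\mathbf{x}$ of $Q(G)$. Specifically, let $r=x_u^2$ and define $\mathbf{y}\in\mathbb{R}^{V(G)\setminus\{u\}}$ by $y_v = x_v/\sqrt{1-r}$. Then $\mathbf{y}$ is a unit vector, and using $\mathbf{z}^{T}Q(H)\mathbf{z}=\sum_{ij\in E(H)}(z_i+z_j)^{2}$ gives
\[
q(G-u)\;\geq\;\mathbf{y}^{T}Q(G-u)\mathbf{y}\;=\;\frac{1}{1-r}\Bigl(q(G)-\sum_{v\sim u}(x_u+x_v)^{2}\Bigr).
\]
So the whole proof reduces to upper bounding $S:=\sum_{v\sim u}(x_u+x_v)^{2}$; more precisely, a short computation (which I outline below) shows that the claimed inequality is equivalent to
\[
S \;\leq\; 2\,r\,q(G)+1-nr.
\]

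To bound $S$, I will expand it and invoke the eigenvector equation $q(G)x_u=d_u x_u+\sum_{v\sim u}x_v$, which yields $\sum_{v\sim u}x_v=(q(G)-d_u)x_u$. Substituting and collecting terms gives
\[
S \;=\; d_u x_u^{2}+2x_u\!\!\sum_{v\sim u}\!x_v+\!\!\sum_{v\sim u}\!x_v^{2}\;=\;2\,r\,q(G)-r\,d_u+\sum_{v\sim u}x_v^{2}.
\]
The key step, and where the hypothesis that $x_u$ is the \emph{minimum} entry of $\mathbf{x}$ is used, is the bound
\[
\sum_{v\sim u}x_v^{2}\;\leq\;(1-r)-(n-1-d_u)\,r\;=\;1-r(n-d_u),
\]
obtained by writing $\sum_{v\sim u}x_v^{2}=\sum_{v\neq u}x_v^{2}-\sum_{v\not\sim u,\,v\neq u}x_v^{2}$, noting that the first sum equals $1-r$ by the normalisation, and using $x_v\geq x_u$ (hence $x_v^{2}\geq r$) on each of the $n-1-d_u$ non-neighbours of $u$. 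Combining the two displays gives exactly $S\leq 2\,r\,q(G)+1-nr$.

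Plugging this into the Rayleigh bound yields
\[
q(G-u)\;\geq\;\frac{(1-2r)q(G)-(1-nr)}{1-r},
\]
and to finish I will verify that $(n-2)$ times the right-hand side of the lemma equals precisely this expression. That verification is a direct algebraic rearrangement: bringing the two terms on the right-hand side over the common denominator $(n-1)(1-r)$ and simplifying the numerator, the factor $(n-2)(1-r)+(1-nr)$ collapses to $(n-1)(1-2r)$, which cancels the $(n-1)$ in the denominator. The \emph{main obstacle} is recognising which test vector and which bound on $\sum_{v\sim u}x_v^{2}$ produce exactly the form stated; the bound above works because it correctly exploits both the unit norm and the minimality of $x_u$, whereas the cruder estimate $\sum_{v\sim u}x_v^{2}\leq 1-r$ loses the term $rd_u$ needed for the identity to balance.
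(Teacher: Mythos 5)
Your proof is correct and is essentially the paper's argument, just narrated in the reverse direction: the paper expands $q(G)=\sum_{ij\in E(G)}(x_i+x_j)^2$, bounds the part on $E(G-u)$ above by $(1-x_u^2)\,q(G-u)$ and rearranges, whereas you apply Rayleigh directly to $G-u$ with the normalised restriction of $\mathbf{x}$; these are the same inequality. Your bound on $S=\sum_{v\sim u}(x_u+x_v)^2$, namely $S\le 2x_u^2 q(G)+1-nx_u^2$, is obtained exactly as in the paper (eigenequation plus $\sum_{v\sim u}x_v^2\le 1-(n-d(u))x_u^2$ from minimality of $x_u$), and your final algebraic reduction $(n-2)(1-r)+(1-nr)=(n-1)(1-2r)$ is the identity the paper also uses.
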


\begin{proof}
First, the Rayleigh principle implies that
\begin{align}
q(G)&=\mathbf{x}^\mathrm{T}Q(G)\mathbf{x}=\sum_{ij\in E(G)}(x_{i}+x_{j})^{2}=\sum_{ij\in E(G-u)}(x_{i}+x_{j})^{2}+\sum_{j\in N (u)}(x_u+x_{j})^{2} \notag \\
&= \sum_{ij\in E(G-u)}(x_{i}+x_{j})^{2}+d(u)x_u^{2}+2x_u\sum_{j\in N (u)}x_{j}+\sum_{j\in N (u)}x_{j}^{2} \notag \\
&\leq (1-x_u^{2})q(G-u)+d(u)x_u^{2}+2x_u\sum_{j\in N (u)}x_{j}+\sum_{j\in N (u)}x_{j}^{2}. \label{ne1}
\end{align}
By the eigenequation for the vertex $u$, we have 
$$(q(G)-d(u))x_u =\sum_{j\in N (u)}x_{j}.$$
Then we see that
\begin{displaymath}
\begin{split}
d(u)x_u^{2}+2x_u\sum_{j\in N (u)}x_{j}+\sum_{j\in N (u)}x_{j}^{2}
&= d(u)x_u^{2}+2(q(G)-d(u))x_u^{2}+\sum_{j\in N (u)}x_{j}^{2}\\
&\leq d(u)x_u^{2}+2(q(G)-d(u))x_u^{2}+1-(n-d(u))x_u^{2}\\
&= 2q(G)x_u^{2}-nx_u^{2}+1.
\end{split}
\end{displaymath}
Combining this with (\ref{ne1}), we find that
\begin{equation*} 
q(G-u)\geq q(G)\frac{1-2x_u^{2}}{1-x_u^{2}}-\frac{1-nx_u^{2}}{1-x_u^{2}}.
\end{equation*} 
Therefore, we have
\begin{align*} \notag
\frac{q(G-u)}{n-2}&\geq\frac{q(G)}{n-1}\bigg(1+\frac{1}{n-2}\bigg)\frac{1-2x_u^{2}}{1-x_u^{2}}-\frac{1-nx_u^{2}}{(n-2)(1-x_u^{2})}\\ 
&= \frac{q(G)}{n-1}\bigg(1+\frac{1-nx_u^{2}}{(n-2)(1-x_u^{2})}\bigg)-\frac{1-nx_u^{2}}{(n-2)(1-x_u^{2})}. 
\end{align*}
This completes the proof. 
\end{proof}

\subsection{Two graph-reduction algorithms}

The following result is a key ingredient in our paper. 
 The main contribution of this paper is to provide a unified approach to proving the supersaturation and stability results stated in Section \ref{sec-2} in terms of the signless Laplacian spectral radius by using Theorem \ref{h1}. 

\begin{theorem}\label{h1}
Suppose that $0<\alpha\leq2/3$, $0<\beta<1/2$, $\alpha < \gamma$,  $1/2+\beta\leq\gamma<1$, $c=6^{-3/\alpha\beta}$, $s\geq 0$ and $n$ is a sufficiently large integer. If $G$ is a graph of order $n$ with
\begin{equation}\label{s1}
q(G)>2\gamma n-s/n~~\mbox{and}~~\delta(G)\leq(\gamma-\alpha)n,
\end{equation}
then there exists an induced subgraph $H\subseteq G$ with $|H|>c n$ satisfying 
$$q(H)>2\gamma |H|~~\mbox{and}~~\delta(H)>(\gamma-\alpha)|H|.$$
\end{theorem}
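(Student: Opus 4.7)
The plan is to construct $H$ by iteratively stripping off the vertex of smallest Perron coordinate from the signless Laplacian of the current subgraph. Formally, set $G_0 = G$ and $n_i := |V(G_i)|$. At each step $i \geq 0$, if $\delta(G_i) > (\gamma - \alpha)n_i$, halt and return $H = G_i$. Otherwise let $\mathbf{x}^{(i)}$ be a nonnegative unit eigenvector of $Q(G_i)$ for $q(G_i)$, choose $u_i$ to minimize $x^{(i)}_{u_i}$, and set $G_{i+1} = G_i - u_i$. I will show by induction that the strict inequality $q(G_i) > 2\gamma n_i$ persists throughout the process, and that the process must halt before $n_i$ drops below $cn$.

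Under the inductive hypothesis $q(G_i) > 2\gamma n_i$ together with $\delta(G_i) \leq (\gamma - \alpha)n_i$, Lemma \ref{min} yields
\[
n_i\,(x^{(i)}_{u_i})^{2} \;\leq\; \frac{n_i\,\delta(G_i)}{q(G_i)^2 - 2q(G_i)\delta(G_i) + n_i\delta(G_i)} \;\leq\; \frac{\gamma - \alpha}{4\alpha\gamma + \gamma - \alpha} \;=:\; A,
\]
and $A < 1$ since $4\alpha\gamma > 0$. Writing $\psi(H) := q(H)/(|H| - 1) - 1$, Lemma \ref{sdf} rearranges into the multiplicative recursion
\[
\psi(G_{i+1}) \;\geq\; \psi(G_i)\left(1 + \frac{1 - n_i (x^{(i)}_{u_i})^2}{(n_i - 2)(1 - (x^{(i)}_{u_i})^2)}\right) \;\geq\; \psi(G_i)\left(1 + \frac{1 - A}{n_i}\right),
\]
and telescoping over $k$ steps gives the geometric lower bound
\[
\psi(G_k) \;\geq\; \psi(G_0)\cdot \left(\frac{n}{n_k}\right)^{1 - A - o(1)}.
\]

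The initial hypothesis $q(G) > 2\gamma n - s/n$ combined with $\gamma \geq 1/2 + \beta$ gives $\psi(G_0) > 2\beta - o(1)$, while the trivial bound $q(H) \leq 2\Delta(H) \leq 2(|H| - 1)$ forces $\psi(G_k) \leq 1$ always. The resulting constraint $2\beta(n/n_k)^{1-A} \leq 1 + o(1)$ produces $n_k \geq n\,(2\beta)^{1/(1-A)}$. Elementary estimates using $\alpha \leq 2/3$ and $\gamma < 1$ give $1/(1-A) = (4\alpha\gamma + \gamma - \alpha)/(4\alpha\gamma) \leq 3/\alpha$, and the inequality $1/(2\beta) \leq 6^{1/\beta}$ valid on $(0, 1/2]$ then yields $(2\beta)^{1/(1-A)} \geq 6^{-3/(\alpha\beta)} = c$, so the iteration must halt while $n_k \geq cn$, returning the desired $H$. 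The hardest point is verifying the induction $q(G_i) > 2\gamma n_i$ at every step: this is equivalent to $\psi(G_i) > 2\beta + 2\gamma/(n_i - 1)$, and while the initial slack $\psi(G_0) - 2\beta$ is only $\Theta(1/n)$ because of the $s/n$ term, a single application of the recursion widens the margin by $\Omega(1/n)$, after which the geometric growth preserves the inequality with room to spare. This delicate balance between the multiplicative gain $(1 + (1-A)/n_i)$ and the drifting target $2\beta + 2\gamma/(n_i - 1)$ is the numerical heart of the argument and dictates the specific form of the constant $c$.
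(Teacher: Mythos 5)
Your proposal follows the same core strategy as the paper's proof: iteratively strip the vertex of smallest Perron coordinate, use Lemma \ref{min} to control the coordinate in terms of $\delta$ and $q$, feed this into Lemma \ref{sdf} to bound the spectral change, and argue that the process must halt at a linear fraction of $n$ because $q(H) \leq 2(|H|-1)$ always. The one genuine simplification you introduce is the change of variable $\psi(H) = q(H)/(|H|-1) - 1$, which turns Lemma \ref{sdf} into the clean multiplicative recursion $\psi(G_{i+1}) \geq \psi(G_i)(1+t_i)$ with $t_i \geq (1-A)/(n_i-2)$. This is a nicer way to see the geometric growth than the paper's bookkeeping, which tracks $q(G_i)$ directly and works with the factor $1 - (1-\alpha\beta/3)/(n-i)$; your $\psi \leq 1$ termination criterion is exactly the paper's $q(G_N) \geq 2N > 2\Delta(G_N)$ contradiction in disguise. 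Your constant chase ($1/(1-A) < 3/\alpha$, then $1/(2\beta) \leq 6^{1/\beta}$) also checks out and recovers $c = 6^{-3/(\alpha\beta)}$ with strict inequality, which is what you need. The one place where your writeup is thinner than it should be is the maintenance of the invariant $q(G_i) > 2\gamma n_i$: at $i = 0$ you have only $q(G) > 2\gamma n - s/n$, which is slightly below the invariant's threshold, so the base case has to be established separately using the first application of the recursion (as the paper does explicitly), and then the inductive step must verify that $(2\gamma-1)(1-A)/(n_i-2)$ dominates the $O(1/n_i^2)$ drift of the target $(2\gamma-1) + 2\gamma/(n_i-1)$. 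You state this correctly in prose but do not carry out the computation; the claim is true (the margin is $\Theta(1/n_i)$ vs.\ a drift of $\Theta(1/n_i^2)$) but deserves to be checked, since the $s/n$ slack makes it a non-trivial first step.
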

\begin{proof}
 Let $n$ be a sufficiently large integer. 
 We denote $N=\lceil c n\rceil$.
We define a sequence of graphs $G_{n}, G_{n-1},\ldots, G_{t}$ by the following Algorithm \ref{thm3-3-alg-1} (We will show that this process terminates at $t> N$). 

\begin{algorithm}[H] 
\caption{Graph Reduction Algorithm Based on the Perron Eigenvector}
\label{thm3-3-alg-1}
\begin{algorithmic}[1]
\STATE Set \( G_n = G \); 
\STATE Set \( i = 0 \);
\WHILE{\( \bm{\delta(G_{n-i}) \leq (\gamma - \alpha)(n-i)} \)}
    \STATE Select a nonnegative unit eigenvector \((x_1, \ldots, x_{n-i})\) corresponding to \( q(G_{n-i}) \);
    \STATE Select a vertex \( u_{n-i} \in V(G_{n-i}) \) such that \( x_{u_{n-i}} = \min\{x_1, \ldots, x_{n-i}\} \);
    \STATE Set \( G_{n-i-1} = G_{n-i} - u_{n-i} \);
    \STATE Add $1$ to $i$;  
\ENDWHILE
\end{algorithmic}
\end{algorithm}  
 In what follows, we prove by induction that 
 for every $i=1,\ldots,n-t$, 
\begin{equation}\label{s2}
q(G_{n-i})>2\gamma (n-i)~~~\mbox{and}~~~
q(G_{n-i})\geq q(G_{n-i+1})
\left(1- \frac{1- \alpha\beta/3}{n-i} \right).
\end{equation}
To do so, we first prove that the assertion is true for the base case $i=1$. Set $q=q(G)=q(G_{n})$, $\delta=\delta(G_n)$ and $x=x_{u_{n}}=\min\{x_1,\ldots, x_n\}$. 
It follows by Lemma \ref{min} that 
$$x^{2} n\leq \frac{\delta n}{q^{2}-2q\delta+n\delta}=\frac{\delta n}{n\delta+(q-\delta)^{2}-\delta^{2}}.$$
Note that the right-hand side increases with $\delta$ and decreases with $q$ on $[\delta,+\infty)$.
In view of (\ref{s1}), 
\begin{displaymath}
\begin{split}
x^{2}n &\leq \frac{(\gamma-\alpha)n^2}{(\gamma-\alpha)n^2
+(\gamma n+\alpha n -s/n)^{2}-(\gamma-\alpha)^{2}n^2}\\
&= \frac{\gamma-\alpha}{(\gamma-\alpha) 
+4\gamma \alpha-2(\gamma+\alpha)s/n^{2}}\\
&\leq \frac{\gamma -\alpha}{\gamma-\alpha
+3\gamma \alpha} \\
&= 1- \frac{3\gamma \alpha}{\gamma-\alpha
+3\gamma \alpha}\\
&\leq  1-\alpha,
\end{split}
\end{displaymath}
where the last inequality follows from $0<\alpha\leq2/3$.

On the other hand, by $1/2+\beta\leq\gamma<1$, we have
$$q(G_{n})=q(G)>2\gamma n-s/n>(1+\beta)n.$$
Note that $0<\beta<1/2$ and $x^{2}n\leq  1-\alpha$. 
By Lemma \ref{sdf}, we see that
\begin{align*}
\begin{split}
\frac{q(G_{n-1})}{n-2}&\geq \frac{q(G_{n})}{n-1}\bigg(1+\frac{1-nx^{2}}{(n-2)(1-x^{2})}\bigg)-\frac{1-nx^{2}}{(n-2)(1-x^{2})}\\
&\geq \frac{q(G_{n})}{n-1}\bigg(1+\frac{\beta(1-nx^{2})}{(1+\beta)(n-2)(1-x^{2})}\bigg)\\
&\geq \frac{q(G_{n})}{n-1}\bigg(1+\frac{2\beta(1-nx^{2})}{3(n-1)}\bigg)\\
&\geq \frac{q(G_{n})}{n-1}\bigg(1+\frac{2\alpha\beta}{3(n-1)}\bigg),
\end{split}
\end{align*}
that is
$$q(G_{n-1})\geq q(G_{n})\bigg(1-\frac{1}{n-1}\bigg)\bigg(1+\frac{2\alpha\beta}{3(n-1)}\bigg)\geq
q(G_{n})\left( 
1- \frac{1-\alpha\beta/3}{n-1} \right).$$
Since $q(G)>2\gamma n-s/n$, we get 
$$q(G_{n-1})> \left(2\gamma n- \frac{s}{n} \right)
\left(1- \frac{1-\alpha\beta/3}{n-1} \right)>2\gamma (n-1).$$
Assume that  (\ref{s2}) holds for $1\leq i\leq n-(t-1)$, that is, 
$$q(G_{n-i})>2\gamma (n-i)>2\gamma (n-i)-s/(n-i),$$
and $$\delta(G_{n-i})\leq(\gamma-\alpha)(n-i).$$
We can repeat the above process and prove the claim for $i+1$, as needed.

 Next, we show that $t>N$. Suppose on the contrary that the sequence reaches the graph $G_{N}$.  
 We will show that $q(G_N)\ge 2N$, which leads to a contradiction since $q(G_N)\le 2\Delta (G_N)\le 2(N-1)$.   
 It is easy to check that 
 $\ln (1-ax) >- (ax +x^2)$ for $0<x< 1/2$ and $0<a<1$. Moreover, we need to use the inequality: $\frac{1}{1+y}< \ln \frac{y+1}{y} < \frac{1}{y}$ for any $y>0$.  
 By applying (\ref{s2}), we have
\begin{displaymath}
\begin{split}
q(G_{N})
&\geq q(G_{N+1})\left( 
1- \frac{1-\alpha\beta/3}{N} \right)\\
&\geq q(G_{n})\prod_{i=N+1}^{n}\left(1- 
\frac{1-\alpha \beta/3}{i-1} \right)\\
&\geq q(G_{n}) 
\exp\left(-\sum_{i=N+1}^{n}
\left( \frac{1-\alpha \beta/3}{i-1} + 
\frac{1}{(i-1)^{2}} \right)\right)\\ 
&\geq q(G_{n})\exp\left(-(1-\alpha\beta/3) \ln \frac{n}{N-1}-1\right)\\
&\geq (2\gamma n-s/n)\left(\frac{n}{N-1}\right)^{-(1-\alpha\beta/3)}e^{-1}\\
&\geq e^{-1}n^{\alpha\beta/3}N^{1-\alpha\beta/3}\\  
&\geq 2N,
\end{split}
\end{displaymath}
where the second to last inequality follows from
$2\gamma n-s/n\geq(1+\beta) n$ and $e^{-1}(1+\beta)(N-1)^{1-\alpha\beta/3}\geq e^{-1}N^{1-\alpha\beta/3}$; and the last inequality follows from $c=6^{-3/\alpha\beta}$ and $N=\lceil c n\rceil\leq (2e)^{-3/\alpha\beta}n$. 
So we conclude that $q(G_N)\ge 2N$. 
This is a contradiction. 

Consequently, the sequence of graphs will terminate at some $G_t$ where $t>N$. From the construction,
we have 
$q(G_{t})>2\gamma t$ and $\delta(G_{t})>(\gamma-\alpha)t$, which completes the proof. 
\end{proof}

By modifying the proof of Theorem \ref{h1}, we can prove the following variant of Nikiforov \cite[Theorem 5]{Nik2008-LAA}. 
Since the argument is similar, we postpone the detailed proof to Appendix \ref{sec-A}. 

\begin{theorem}\label{h1-two-cases}
Suppose that $0<\alpha\leq2/3$, $0<\beta<1$, $\alpha <\gamma$, $31/48\leq\gamma<1$,  $s\geq 0$ and $n$ is a sufficiently large integer. If $G$ is a graph of order $n$ with
%\begin{equation}\label{s1}
$$q(G)>2\gamma n-s/n~~\mbox{and}~~\delta(G)\leq(\gamma-\alpha)n,$$
%\end{equation}
then there is an induced subgraph $H\subseteq G$ with $|H|>(1-\beta) n$ and  satisfying
one of the following:
\begin{itemize}
\item[\rm (i)]
$q(H)>2\gamma(1+\alpha\beta/8)|H|;$
\item[\rm (ii)]
$q(H)>2\gamma |H|~~\mbox{and}~~\delta(H)>(\gamma-\alpha)|H|.$
\end{itemize}
\end{theorem}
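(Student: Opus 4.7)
The approach mirrors the iterative vertex-deletion scheme used in the proof of Theorem \ref{h1}, but with an additional early-stopping criterion that forces $|H|>(1-\beta)n$. Starting from $G_{n}=G$, I would iteratively delete a vertex whose coordinate in the Perron eigenvector of $Q(G_{n-i})$ is smallest, terminating as soon as either (a) the current minimum degree exceeds $(\gamma-\alpha)(n-i)$, in which case output $H=G_{n-i}$ and aim for case (ii); or (b) exactly $\lfloor\beta n\rfloor$ deletions have been performed, in which case output $H=G_{n-\lfloor\beta n\rfloor}$ and aim for case (i).

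As long as the algorithm has not halted, $\delta(G_{n-i})\le (\gamma-\alpha)(n-i)$, so Lemma \ref{min} combined with the inductively maintained lower bound $q(G_{n-i})>2\gamma(n-i)-s/(n-i)$ yields $x_{u}^{2}(n-i)\le 1-\alpha$, exactly as in the proof of Theorem \ref{h1}. Substituting $1-(n-i)x_{u}^{2}\ge \alpha$ into Lemma \ref{sdf} and rearranging gives the multiplicative recurrence
\[
\frac{q(G_{n-i-1})}{n-i-2}\ge \frac{q(G_{n-i})}{n-i-1}\biggl(1+\frac{\alpha\bigl(1-(n-i-1)/q(G_{n-i})\bigr)}{n-i-2}\biggr).
\]
Because the ratio $q(G_{n-i})/(n-i-1)$ remains above $2\gamma$ throughout the process (the recurrence makes it strictly increasing), the parenthesised factor $1-(n-i-1)/q(G_{n-i})$ is at least $1-1/(2\gamma)\ge 7/31$, where the last inequality uses precisely the numerical hypothesis $\gamma\ge 31/48$. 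Hence the recurrence reduces to geometric growth with factor at least $1+c_{0}/(n-i-2)$, where $c_{0}=7\alpha/31$.

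The dichotomy then follows. In scenario (a), the output $H=G_{n-i}$ satisfies $|H|>(1-\beta)n$ and $q(H)>2\gamma|H|$ by the maintained invariant, giving case (ii). In scenario (b), I would telescope the multiplicative recurrence over $\lfloor\beta n\rfloor$ iterations and use $\sum_{i=0}^{\lfloor\beta n\rfloor-1}1/(n-i-2)\ge \ln(1/(1-\beta))\ge \beta$ to conclude
\[
\frac{q(H)}{|H|-1}\ge \frac{q(G)}{n-1}\cdot \exp(c_{0}\beta)\ge 2\gamma(1-o(1))(1+7\alpha\beta/31),
\]
which, because $7/31>1/8$ with positive slack, yields $q(H)>2\gamma(1+\alpha\beta/8)|H|$ once $n$ is sufficiently large.

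The main obstacle is the numerical bookkeeping: the hypothesis $\gamma\ge 31/48$ is calibrated precisely so that the per-step multiplicative gain $c_{0}=\alpha(1-1/(2\gamma))\ge 7\alpha/31$ strictly exceeds $\alpha/8$, and the resulting gap must absorb the difference between $|H|-1$ and $|H|$ as well as the lower-order errors from the $-s/n$ perturbation and from the approximation $\ln(1/(1-\beta))\ge \beta$. No conceptually new ingredient beyond Lemmas \ref{min} and \ref{sdf} is required.
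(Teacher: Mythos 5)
Your proof is correct and follows essentially the same approach as the paper's Appendix~\ref{sec-A}: the identical early-stopping deletion algorithm, the same appeals to Lemmas~\ref{min} and \ref{sdf}, the same invariant on the ratio $q(G_{n-i})/(n-i-1)$, and the same two-case conclusion. The only cosmetic difference is in how the per-step multiplicative gain is bounded — you use $q/(m-1)>2\gamma$ to get the factor $1-1/(2\gamma)\ge 7/31$ directly, whereas the paper decomposes $T$ via the bound $q/(m-1)\ge 5/4$ — but both calibrations of the hypothesis $\gamma\ge 31/48$ land safely above $\alpha\beta/8$.
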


{
\subsection{Some known results for dense graphs}

In this section, we introduce some preliminary results for graphs under the density condition or the minimum degree condition. These results will serve as crucial ingredients in our proofs. 
The following result is the well-known supersaturation lemma due to Erd\H{o}s and Simonovits \cite{ES1983-super}. 

\begin{lem}[Erd\H{o}s--Simonovits \cite{ES1983-super}]  \label{lem-super-cliques}
If $k\ge 1$, $\varepsilon >0$ and 
 $G$ is an $n$-vertex graph with 
$e(G) \ge ( 1-\frac{1}{k}+\varepsilon ) \frac{n^2}{2}$, 
then $G$ contains at least $\Omega_{k,\varepsilon} (n^{k+1})$ copies of $K_{k+1}$. 
\end{lem}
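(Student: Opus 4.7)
The plan is to prove this by the classical double-counting / sampling argument alluded to in the paper's introduction of supersaturation. The strategy is to sample random $t$-subsets of $V(G)$ for an appropriate constant $t=t(k,\varepsilon)$, show that many of these subsets inherit enough edge density to contain a $K_{k+1}$ by Tur\'{a}n's theorem, and then convert this into a lower bound on the total number of cliques by noting that each $K_{k+1}$ can lie in only a bounded number of such $t$-sets.

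Concretely, first I would fix an integer $t\geq k+1$ large enough that $e(T_{t,k})<(1-\tfrac{1}{k}+\tfrac{\varepsilon}{2})\tfrac{t^2}{2}$; since $e(T_{t,k})=(1-\tfrac{1}{k})\tfrac{t^2}{2}-O(1)$, the choice $t=\lceil 4k/\varepsilon\rceil$ is enough. By Tur\'{a}n's inequality \eqref{eq-Turan}, every $t$-vertex graph with more than $e(T_{t,k})$ edges contains a copy of $K_{k+1}$. Next, call a set $T\in\binom{V(G)}{t}$ \emph{heavy} if $e(G[T])\geq(1-\tfrac{1}{k}+\tfrac{\varepsilon}{2})\tfrac{t^2}{2}$. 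I would estimate the expected number of edges in a uniformly random $T$:
\begin{equation*}
\mathbb{E}\bigl[e(G[T])\bigr] \;=\; e(G)\cdot\frac{\binom{t}{2}}{\binom{n}{2}} \;\geq\; \Bigl(1-\tfrac{1}{k}+\varepsilon\Bigr)\tfrac{t(t-1)}{2}.
\end{equation*}
Since $e(G[T])\leq \binom{t}{2}$ trivially, a standard averaging argument shows that the fraction of heavy $t$-sets is at least $\varepsilon/3$ (comparing the contribution of heavy and non-heavy sets to the expectation, with the non-heavy ones contributing at most $(1-\tfrac1k+\tfrac{\varepsilon}{2})\tfrac{t^2}{2}$ edges each).

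Thus the number of heavy $t$-sets is at least $\tfrac{\varepsilon}{3}\binom{n}{t}$, and each one contains at least one copy of $K_{k+1}$ by the choice of $t$. On the other hand, any fixed $K_{k+1}\subseteq G$ is contained in exactly $\binom{n-k-1}{t-k-1}$ many $t$-subsets of $V(G)$. Therefore the total number of copies of $K_{k+1}$ in $G$ is at least
\begin{equation*}
\frac{\tfrac{\varepsilon}{3}\binom{n}{t}}{\binom{n-k-1}{t-k-1}} \;=\; \frac{\varepsilon}{3}\cdot\frac{t!}{(k+1)!\,(t-k-1)!}\cdot\frac{n(n-1)\cdots(n-k)}{t(t-1)\cdots(t-k)} \;=\; \Omega_{k,\varepsilon}\bigl(n^{k+1}\bigr),
\end{equation*}
since $t$ depends only on $k$ and $\varepsilon$.

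The main technical point I expect to be tedious rather than deep is the averaging step: making the constant $\varepsilon/3$ precise requires carefully bounding the contribution of non-heavy sets against the expected edge count, and also absorbing the lower-order term $e(G)\tfrac{t(t-1)}{2}/\binom{n}{2}$ versus $e(G)\tfrac{t^2}{n^2}$. An alternative I would keep in reserve, in case the averaging constant is awkward, is to invoke the vertex-deletion approach (as in \cite[Proposition 4.2]{Kee2011}): iteratively delete vertices of small degree to obtain a subgraph $G'$ on $n'=\Omega_\varepsilon(n)$ vertices with $\delta(G')\geq(1-\tfrac{1}{k}+\tfrac{\varepsilon}{2})n'$, and then apply a greedy extension together with Tur\'{a}n's theorem on common neighborhoods to build $\Omega_{k,\varepsilon}(n^{k+1})$ cliques directly; this route avoids averaging altogether and will serve as a clean backup.
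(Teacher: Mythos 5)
Your proof is correct and follows essentially the same double-counting/sampling argument that the paper itself sketches in Section 2.1 (attributed to \cite[Lemma 2.1]{Kee2011}): sample $t$-sets, show an $\Omega_{k,\varepsilon}(1)$ fraction are ``heavy'' by an averaging argument, and divide by the number of $t$-sets containing a fixed $K_{k+1}$. Two small remarks: first, your final identity has a spurious factor, since
\[
\frac{\binom{n}{t}}{\binom{n-k-1}{t-k-1}} \;=\; \frac{n(n-1)\cdots(n-k)}{t(t-1)\cdots(t-k)},
\]
without the extra $\frac{t!}{(k+1)!(t-k-1)!}$, though this does not affect the $\Omega_{k,\varepsilon}(n^{k+1})$ conclusion; second, the justification you give for the choice of $t$ (ensuring $e(T_{t,k})<(1-\tfrac1k+\tfrac\varepsilon2)\tfrac{t^2}{2}$) is vacuous since it holds for all $t$ --- the genuine lower bound on $t$ comes from making the averaging inequality yield a positive fraction of heavy sets, and your choice $t=\lceil 4k/\varepsilon\rceil$ is large enough for that purpose.
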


To prove Theorem \ref{lsES}, 
we need to use a result of Bollob\'as and Erd\H{o}s  \cite{BE1973}.  

\begin{lem}[Bollob\'{a}s--Erd\H{o}s \cite{BE1973}] \label{lem-BE1973} 
If $k\ge 1, \varepsilon >0$ and $G$ is an $n$-vertex graph with 
$ e(G)\ge (1-\frac{1}{k}+\varepsilon ) \frac{n^2}{2}$, then $G$  
contains a copy of $K_{k+1}[t]$ with  $t=\Omega_{k,\varepsilon}(\log n)$. 
\end{lem}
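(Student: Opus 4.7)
The plan is to proceed by induction on $k$, with the Kővári–Sós–Turán theorem as the base case and a neighborhood-sampling argument providing the inductive step. For $k=1$, the statement reduces to finding $K_{t,t}$ in a graph with $\ge \varepsilon n^2/2$ edges, which is a direct consequence of the KST bound $\mathrm{ex}(n; K_{t,t}) = O(n^{2-1/t})$: choosing $t = c \log n$ with $c = c(\varepsilon) > 0$ sufficiently small makes the KST threshold drop below $\varepsilon n^2/2$, so $K_{t,t}$ must appear.

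For the inductive step, assume the lemma for $k-1$ and suppose $G$ has $e(G) \ge (1 - 1/k + \varepsilon) n^2/2$. First I would \emph{clean} the graph by iteratively deleting any vertex whose current degree is below $(1 - 1/k + \varepsilon/2)$ times the current order; a standard calculation shows the process terminates with a subgraph $G'$ on $n' = \Omega_{k,\varepsilon}(n)$ vertices having minimum degree $\delta(G') \ge (1 - 1/k + \varepsilon/2) n'$. Next, pick a set $S$ of $s := \lfloor c \log n' \rfloor$ vertices (for a small $c = c(k,\varepsilon)$) uniformly at random, and let $N := \bigcap_{v \in S} N_{G'}(v)$. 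Convexity applied to $\mathbb{E}[|N|] = \sum_v \binom{d(v)}{s}\big/\binom{n'}{s}$ gives $\mathbb{E}[|N|] \ge n' \cdot (1-1/k+\varepsilon/2)^s = n'^{\,1 - O(c)}$, which is polynomially large. An analogous codegree calculation yields $\mathbb{E}[e(G'[N])] \ge (1 - 1/(k-1) + \varepsilon') \, (\mathbb{E}|N|)^2/2$ for some $\varepsilon' = \varepsilon'(k,\varepsilon) > 0$; a first-moment deletion argument then fixes a specific $S$ for which $G'[N]$ has polynomial size and density exceeding the $(k-1)$-partite Tur\'an threshold. Applying the induction hypothesis to $G'[N]$ yields $K_k[t_0] \subseteq G'[N]$ with $t_0 = \Omega_{k,\varepsilon}(\log n)$. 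Combining $K_k[t_0]$ with any $t_0$-subset of $S$ (whose vertices are joined to every vertex of $N$) produces the desired $K_{k+1}[t]$ with $t = \min(s, t_0) = \Omega_{k,\varepsilon}(\log n)$.

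The technical crux is the codegree/density estimate: after sampling $S$, the induced graph $G'[N]$ must retain density at least $1 - 1/(k-1) + \varepsilon'$ to permit applying induction. This requires that most edges $uv$ of $G'$ satisfy $|N(u) \cap N(v)| \ge (1 - 2/k + \Omega(\varepsilon))\, n'$, which follows from a double-counting argument: if too many edges had low codegree, then the number of $K_{k+1}$'s forced by the density of $G'$ via supersaturation would lead to a contradiction. Keeping the constant $c = c(k,\varepsilon)$ bounded away from zero through $k$ successive inductive steps is the other place where care is required, but the argument still works because at each step the common neighborhood $|N|$ shrinks only by a subpolynomial factor, so $\log |N|$ and $\log n$ differ only by a constant depending on $k$ and $\varepsilon$. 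This is precisely why the sampling-based approach achieves $t = \Omega(\log n)$, rather than the weaker $t = \Omega((\log n)^{1/k})$ one would obtain from a direct hypergraph Tur\'an argument applied to the $(k+1)$-uniform hypergraph of $K_{k+1}$-copies in $G$.
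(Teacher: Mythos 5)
The paper does not prove this lemma; it is stated and used as a black box, cited directly from Bollob\'as and Erd\H{o}s \cite{BE1973}. So there is no internal argument to compare against, and I am evaluating your proposal on its own.

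Your base case via K\H{o}v\'ari--S\'os--Tur\'an and the cleaning step to reach $\delta(G')\ge(1-\tfrac1k+\tfrac\varepsilon2)n'$ are both fine, but the inductive step rests on a claim that is false. You assert that after sampling $S$ of size $s=\lfloor c\log n'\rfloor$ the common neighborhood $N=\bigcap_{v\in S}N_{G'}(v)$ satisfies $\mathbb{E}[e(G'[N])]\ge(1-\tfrac{1}{k-1}+\varepsilon')(\mathbb{E}|N|)^2/2$. Take $G'=T_{n',k+1}$, the balanced complete $(k+1)$-partite graph. Its minimum degree is $(1-\tfrac{1}{k+1})n'+O(1)=(1-\tfrac1k+\tfrac{1}{k(k+1)})n'+O(1)$, so for any $\varepsilon\le\tfrac{2}{k(k+1)}$ this $G'$ already satisfies your minimum-degree condition and passes the cleaning step unchanged (and it is also a legitimate choice of the original $G$, since its density exceeds $1-\tfrac1k+\varepsilon$). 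For \emph{any} $S$ here, $N$ is exactly the union of the parts $V_j$ disjoint from $S$. With probability $1-o(1)$ the random $S$ hits every part, giving $N=\emptyset$; the dominant nonzero contribution to $\mathbb{E}|N|$ comes from $S$ missing exactly one part, in which case $N$ is an independent set and $e(G'[N])=0$. A direct computation gives
$\mathbb{E}[e(G'[N])]\big/\bigl((\mathbb{E}|N|)^2/2\bigr)\approx\bigl(1-\tfrac{1}{k+1}\bigr)\bigl(1-\tfrac{1}{k^2}\bigr)^{s}\to 0$,
so the claimed inequality fails badly. (The theorem is of course true for $T_{n',k+1}$ trivially, but your inductive machinery does not see this, and the lemma as you have stated it is simply wrong.)

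The structural reason the codegree argument cannot work as stated is that in a graph with large minimum degree, \emph{non-edges may have larger codegree than edges} --- in any complete multipartite graph they do. Your sampling therefore biases $N$ toward a sparse induced subgraph, not a dense one. The supersaturation/double-counting justification you offer controls only the codegrees along \emph{edges} (which is the easy inclusion--exclusion bound $|N(u)\cap N(v)|\ge d(u)+d(v)-n'$); it says nothing about the non-edge contribution to $\binom{|N|}{2}$, which is exactly what destroys the density claim. A secondary weakness, even granting the expectation inequality, is that density is a ratio: passing from $\mathbb{E}[e(G'[N])]$ and $\mathbb{E}|N|$ to a single $S$ for which $e(G'[N])/\binom{|N|}{2}$ is large requires controlling $\mathbb{E}\bigl[\binom{|N|}{2}\bigr]$ (second moment), not $(\mathbb{E}|N|)^2$; your ``first-moment deletion'' step glosses over this. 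Known proofs (Bollob\'as--Erd\H{o}s \cite{BE1973}, Bollob\'as--Erd\H{o}s--Simonovits \cite{BES1976}) avoid this obstruction; they grow a complete $(k+1)$-partite subgraph class by class rather than by passing to a sampled common neighborhood and invoking density.
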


The following lemma of Nikiforov \cite{N2010} is needed in the proof of Theorem \ref{thm-h3}. 

\begin{lem}[Nikiforov \cite{N2010}]\label{h2}
Let $k\geq2$ and $G$ be a graph of order $n$. If $G$ contains a copy of $K_{k+1}$ and
$\delta(G)>(1- \frac{1}{k}- \frac{1}{k^{4}})n$, then $G$ contains a copy of 
$K_{k}^{+}[t]$ with $t=\Omega_k(\log n)$.
\end{lem}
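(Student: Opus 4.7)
My plan is to prove this lemma by adapting the classical Bollob\'{a}s--Erd\H{o}s iterative blowup technique, using the given copy of $K_{k+1}$ to supply the single extra edge required by $K_k^+[t]$.

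First, I would set up the embedding. Let $\{v_0, v_1, \ldots, v_k\}$ be the vertices of the $K_{k+1}$ guaranteed by the hypothesis. Place $v_0$ and $v_1$ in the first partite set $P_1$ of the target $K_k^+[t]$, so that the edge $v_0 v_1$ becomes the ``extra'' edge, and place $v_i \in P_i$ for $i = 2, \ldots, k$. The remaining task is to enlarge each $P_i$ to size $t = \Omega_k(\log n)$ while preserving the structural requirements: $P_1$ is independent apart from $v_0 v_1$, each $P_i$ ($i \geq 2$) is independent, and every pair of vertices in different parts is adjacent in $G$.

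Second, I would run a greedy type-refinement in the Bollob\'{a}s--Erd\H{o}s style. Maintain a reservoir $W \subseteq V \setminus (P_1 \cup \cdots \cup P_k)$ of live candidates, each labelled by the parts $P_i$ to which it may still be added; a vertex $u \in W$ is a $P_i$-candidate iff $u$ is adjacent to every current vertex outside $P_i$ and non-adjacent to every current vertex inside $P_i$. When a new vertex is chosen and placed into some $P_j$, refine $W$ by discarding candidates whose adjacency to the new vertex violates their labels. The minimum-degree hypothesis $\delta(G) > (1 - \frac{1}{k} - \frac{1}{k^4})n$ ensures that every vertex has at most $(\frac{1}{k} + \frac{1}{k^4})n$ non-neighbours, so each refinement loses only a small fraction (of order $1/k$) of $W$. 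A pigeonhole over the $k$ possible target parts then gives that at least one class retains a constant fraction of $|W|$. Iterating $h = \Omega_k(\log n)$ times, $|W|$ remains positive and each $|P_i|$ reaches $t = \Omega_k(\log n)$, yielding the desired copy of $K_k^+[t]$.

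The main obstacle I anticipate is the asymmetric treatment of $P_1$ caused by the extra edge. Unlike the standard Bollob\'{a}s--Erd\H{o}s argument that produces a balanced $K_{k+1}[t]$, vertices subsequently added to $P_1$ must be non-adjacent to \emph{both} $v_0$ and $v_1$ (and to all earlier $P_1$-additions), yet adjacent to every vertex of $P_2, \ldots, P_k$ and their later companions. Verifying that the reservoir of $P_1$-candidates does not shrink faster than the candidates for the other parts is the technical heart of the argument, and it relies crucially on the near-optimal slack $\frac{1}{k^4}$ in the minimum-degree hypothesis: this slack is precisely what forbids $G$ from resembling a $k$-partite graph one of whose parts is too dense to admit the non-adjacencies needed to populate $P_1$.
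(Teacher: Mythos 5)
The paper does not prove Lemma~\ref{h2}; it is imported verbatim from Nikiforov~\cite{N2010}, so there is no in-paper argument to compare against. Assessing your proposal on its own terms, the central greedy step does not work, and the gap is not a technicality.

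Your key quantitative claim is that ``each refinement loses only a small fraction (of order $1/k$) of $W$.'' This is incorrect. When a new vertex $x$ is placed into part $P_j$, the $P_i$-candidates for each $i\neq j$ that are discarded are the non-neighbours of $x$ in $W$, and the only bound the minimum-degree hypothesis gives is that this is at most $\rho n$ with $\rho=\tfrac1k+\tfrac1{k^4}$. That is a constant fraction of $n$, \emph{not} of $|W|$; as soon as $|W|$ drops below $\rho n$ (which can happen after a bounded number of steps), the estimate is vacuous. Worse, the hypothesis $\delta(G)>(1-\rho)n$ sits strictly \emph{below} the Tur\'an threshold $(1-\tfrac1k)n$, so $k\rho>1$. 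Concretely, after seeding with the initial $K_{k+1}$ (so $|P_1|=2$ and $|P_i|=1$ for $i\ge2$), a candidate for any $P_i$ with $i\ge2$ must be adjacent to the $k$ vertices $\bigcup_{j\neq i}P_j$, and the union bound only guarantees at least $n\bigl(1-k\rho\bigr)<0$ such vertices. The greedy can get stuck adding to $P_1$ alone, never growing $P_2,\dots,P_k$. (In a friendly instance like $T_{n,k}$ plus one edge the non-neighbourhoods overlap so the greedy would succeed, but the lemma must hold for \emph{all} graphs meeting the hypotheses, and nothing in your argument exploits or forces such overlap.) This is qualitatively different from the Bollob\'as--Erd\H{o}s setting, where the edge density $1-\tfrac1k+\varepsilon$ is \emph{above} threshold, and even there the proof of $K_{k+1}[t]$ is not a vertex-by-vertex greedy but a dependent-random-choice / iterated K\H{o}v\'ari--S\'os--Tur\'an argument.

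Two smaller points. First, you require candidates for $P_i$ to be non-adjacent to the current $P_i$; this is unnecessary (a copy of $K_k^+[t]$ is sought as a subgraph, not an induced one) and only makes the candidate sets smaller. Second, the obstacle you flag in your last paragraph -- the asymmetric treatment of $P_1$ -- is real, but your remedy (the $\tfrac{1}{k^4}$ slack controlling the shrinkage) is pointed in the wrong direction: the slack is \emph{below} what a greedy needs, and its actual role in Nikiforov's argument is to guarantee, via the Bollob\'as--Nikiforov joint lemma (Lemma~\ref{lem-lem6} in this paper), an edge $uv$ lying in $\Omega_k(n^{k-1})$ copies of $K_{k+1}$. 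One then works in $W=N(u)\cap N(v)$, where $G[W]$ has $\Omega_k(|W|^{k-1})$ copies of $K_{k-1}$, and invokes the clique blow-up theorem~\cite{Niki2008blms} to produce $K_{k-1}[t]$ inside $W$; the remaining -- and genuinely delicate -- task of adjoining a further $t-2$ vertices of $P_1$ adjacent to the whole blow-up is then handled by an averaging/double-counting argument over the many available $K_{k-1}$-blowups, not by a local greedy. As written, your proposal does not supply that argument and therefore does not establish the lemma.
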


To prove Theorem \ref{thm-q-js}, we need to use the following result \cite[Lemma 6]{BN2008}.  

\begin{lem}[Bollob\'{a}s--Nikiforov \cite{BN2008}]  \label{lem-lem6}
 Let $k\ge 2$ and $G$ be a graph of order $n$. 
If $G$ contains a copy of $K_{k+1}$ and $\delta (G)>(1-\frac{1}{k}-\frac{1}{k^4})n$, 
then $js_{k+1}(G) > {n^{k-1}}/{k^{k+3}}$. 
\end{lem}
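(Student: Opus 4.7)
To prove the lemma, the plan is to locate a single edge $e^*$ of the given $K_{k+1}$, say $K = \{v_1, \ldots, v_{k+1}\}$, whose common neighborhood $N_{e^*} := N(v_{i^*}) \cap N(v_{j^*})$ is both large and dense enough that $G[N_{e^*}]$ contains $\Omega_k(n^{k-1})$ copies of $K_{k-1}$; each such $K_{k-1}$ extends $e^*$ to a distinct $K_{k+1}$ in $G$, which supplies the required lower bound $js_{k+1}(G) > n^{k-1}/k^{k+3}$.

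First I would locate the edge by a convex-averaging argument. For each $w \in V(G) \setminus K$, let $c(w) = |\{i \in [k+1] : w \sim v_i\}|$; the hypothesis forces $\sum_{w \notin K} c(w) = \sum_{i=1}^{k+1} (d(v_i) - k) \ge (k+1)(\delta - k)$, so the mean $\bar c$ exceeds $k - 1/k - o(1)$. Using the identity
\[ \sum_{1 \le i < j \le k+1} |N(v_i) \cap N(v_j)| = (k-1)\binom{k+1}{2} + \sum_{w \notin K} \binom{c(w)}{2}, \]
and bounding the final sum below by Jensen's inequality applied to the convex function $\binom{\cdot}{2}$, pigeonhole over the $\binom{k+1}{2}$ edges of $K$ then yields an edge $e^* = v_{i^*} v_{j^*}$ with $|N_{e^*}| \ge \rho_k n$, where $\rho_k$ is a positive constant depending only on $k$.

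Next I would bound $\delta(G[N_{e^*}])$ from below: each $w \in N_{e^*}$ has at most $n - \delta - 1$ non-neighbors in $G$, hence at most that many in $N_{e^*}$, giving $\delta(G[N_{e^*}]) \ge \delta + |N_{e^*}| - n \ge \sigma_k n$ for a constant $\sigma_k > 0$. A direct computation from the values of $\delta$ and $\rho_k$ shows that $\sigma_k/\rho_k$ exceeds the Tur\'{a}n-type threshold $1 - 1/(k-2)$ required to count $K_{k-1}$'s inside $G[N_{e^*}]$. I would then perform the greedy count: build ordered $(k-1)$-cliques $u_1, \ldots, u_{k-1}$ by selecting each $u_s$ from the common neighborhood of the previously chosen ones inside $G[N_{e^*}]$, which by inclusion-exclusion within $N_{e^*}$ contains at least $s\sigma_k n - (s-1)|N_{e^*}|$ vertices. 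The product of these sizes supplies $\Omega_k(n^{k-1})$ ordered $(k-1)$-cliques; after dividing by $(k-1)!$ and tracking the constants, the count exceeds $n^{k-1}/k^{k+3}$ for $n$ sufficiently large.

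The main obstacle will be pushing the greedy count all the way through in the borderline regime, especially for small $k$ where $\sigma_k/\rho_k$ is only marginally above $1 - 1/(k-2)$ and the final candidate set threatens to vanish. To circumvent this I would anchor the count on the canonical $K_{k-1}$ $\{v_\ell : \ell \in [k+1] \setminus \{i^*, j^*\}\} \subseteq N_{e^*}$, which is guaranteed by the hypothesis $K \subseteq G$: fixing this clique as a starting point restricts the greedy to only $k-2$ extensions, bypassing the tightest step; careful bookkeeping of $\rho_k$ and $\sigma_k$ from the first two steps then yields the desired bound $n^{k-1}/k^{k+3}$.
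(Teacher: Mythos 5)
The paper cites this lemma (from Bollob\'{a}s--Nikiforov \cite{BN2008}) rather than proving it, so there is no in-paper proof to compare against; I evaluate your proposal directly. Your overall plan is sound, and it does deliver $js_{k+1}(G)=\Omega_k(n^{k-1})$ (which is all that the proof of Theorem \ref{thm-q-js} actually needs), but it falls short of the stated constant $n^{k-1}/k^{k+3}$, and the proposed ``anchor'' fix does not close the gap. Writing $a := 1/k + 1/k^4$, your Jensen step gives $|N_{e^*}|\ge\rho_k n$ with $\rho_k = \frac{(1-a)\bigl(k-(k+1)a\bigr)}{k}+o(1)$, and hence $\delta(G[N_{e^*}])\ge\sigma_k n$ with $\sigma_k=\rho_k-a$. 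The last factor in the greedy product is
\[
(k-2)\sigma_k - (k-3)\rho_k \;=\; \rho_k-(k-2)a \;=\; \frac{k+2}{k^6}+O\!\left(k^{-9}\right),
\]
which is barely positive. Multiplying through the $k-1$ factors and dividing by $(k-1)!$ gives a count on the order of $\frac{n^{k-1}}{(k-1)\,k^{k+3}}$, a factor of roughly $k-1$ short of the target. For $k=3$ the discrepancy is concrete: $\rho_3=6943/19683$, $\sigma_3=139/19683$, so the edge count of $G[N_{e^*}]$ is bounded below by $\frac{1}{2}\rho_3\sigma_3 n^2\approx n^2/803$, which is strictly less than $n^2/729 = n^2/k^{k+3}$.

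The ``anchor on the canonical $K_{k-1}\subseteq N_{e^*}$'' device, as described, cannot recover the missing factor: fixing the entire canonical $(k-1)$-clique as starting point produces exactly one $K_{k-1}$ in $N_{e^*}$, while fixing a single vertex of it and greedily extending produces only $O(n^{k-2})$ copies. Either way you do not reach $\Omega(n^{k-1})$ with the improved constant. So the closing claim that ``careful bookkeeping of $\rho_k$ and $\sigma_k$ then yields the desired bound'' is asserting something your computation does not support; a sharper counting device than the plain minimum-degree greedy in $G[N_{e^*}]$ is required to hit $k^{-(k+3)}$.
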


In 2023, Fox and Wigderson \cite[Lemma 2.3]{FW2023} provided an improvement of Lemma \ref{lem-lem6} by proving that 
if $k\ge 2, \alpha >0$ and $G$ is an $n$-vertex graph with $ \delta (G) \ge (1- \frac{2}{2k-1} + \alpha )n$, 
then every copy of $K_{k+1}$ in $G$ 
contains an edge that lies in at least $C_k \alpha n^{k-1}$ copies of $K_{k+1}$ 
for some constant $C_k>0$ depending only on $k$. Later, Fox et al. \cite[Lemma A.1]{FHW2021} showed that $C_k\ge (8k+8)^{-(k-1)}$.

To prove Theorem \ref{sst}, we need to use the 
Erd\H{o}s--Simonovits stability. 

\begin{lem}[Erd\H{o}s--Simonovits \cite{Erd1966Sta1,Erd1966Sta2,S1968}]\label{est}
Let $F$ be a graph with $\chi(F)=k+1\geq3$. For every $\varepsilon>0$, there exist $\sigma>0$ and $n_{0}$ such that
if $G$ is an $F$-free graph on $n\geq n_{0}$ vertices with $e(G)\geq(1- \frac{1}{k} -\sigma)\frac{n^{2}}{2}$, then $G$ can be obtained from $T_{n,k}$ by adding and deleting at most $\varepsilon n^{2}$ edges.
\end{lem}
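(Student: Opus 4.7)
The plan is to follow the original Erd\H{o}s--Simonovits strategy: apply the Erd\H{o}s--Stone--Simonovits upper bound (Theorem \ref{eq-ESS}) to pin the edge count into a narrow window around $e(T_{n,k})$, then perform vertex-cleaning to extract a large subgraph of high minimum degree, and finally identify an almost-Tur\'an $k$-partition via a degree-based clustering argument backed by supersaturation of $K_{k+1}[t]$.

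First I would reduce to the case $F=K_{k+1}[t]$. Since $\chi(F)=k+1$, there is a fixed integer $t=t(F)$ with $F\subseteq K_{k+1}[t]$, so every $F$-free graph is $K_{k+1}[t]$-free; it suffices to prove the statement for this canonical forbidden subgraph. Applying Theorem \ref{eq-ESS} to $K_{k+1}[t]$ gives $e(G)\le (1-1/k+o(1))n^{2}/2$, so combined with the hypothesis the edge count lies in a window of width $o(n^{2})$ around $(1-1/k)n^{2}/2$. Choosing an auxiliary parameter $\eta=\eta(\varepsilon)$ (eventually a small power of $\sigma$), I would iteratively delete every vertex of current degree below $(1-1/k-\eta)n^{*}$, where $n^{*}$ denotes the running vertex count. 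A double-count comparing the Erd\H{o}s--Stone upper bound on the cleaned subgraph to the edge lower bound inherited from the hypothesis forces the total number $m$ of deletions to satisfy $m=O(\eta n)$, so the cleaned graph $G^{*}$ has $n^{*}=(1-O(\eta))n$ vertices, minimum degree $\delta(G^{*})\ge (1-1/k-\eta)n^{*}$, and remains $K_{k+1}[t]$-free.

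Next I would extract a canonical $k$-partition from $G^{*}$ and bound the edit distance. Pick any copy of $K_{k}$ in $G^{*}$ (present, e.g., because $e(G^{*})>(1-1/(k-1))(n^{*})^{2}/2$), with vertices $u_{1},\ldots,u_{k}$; for each $v\in V(G^{*})$ set $I(v)=\{i:u_{i}\not\sim v\}$. The minimum-degree bound yields $\sum_{v}|I(v)|\le k\eta n^{*}$, and a greedy embedding inside the dense common neighborhood $N(u_{1})\cap\cdots\cap N(u_{k})$ shows that any linear-size set of vertices with $|I(v)|\ge 2$ would create a copy of $K_{k+1}[t]$, contradicting the forbidden-subgraph assumption. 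Hence all but $o_{\eta}(n)$ vertices satisfy $|I(v)|=1$, and these vertices partition into classes $V_{1},\ldots,V_{k}$ via the index of their unique non-neighbor. A further supersaturation argument shows that every edge inside a class $V_{i}$, combined with the almost-complete bipartite structure between the remaining classes, spawns $\Omega(n^{(k+1)t-2})$ copies of $K_{k+1}[t]$; hence there are at most $O(\eta n^{2})$ edges inside classes. Comparing the resulting between-class edge count against the hypothesis controls the missing cross-edges by a symmetric bound, and re-balancing class sizes to the Tur\'an sizes and re-inserting the cleaned vertices each cost at most $O(\eta n^{2})$ further edits. Choosing $\sigma$ small enough in terms of $\varepsilon$ delivers the required $\varepsilon n^{2}$ bound.

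The main obstacle is making the supersaturation step quantitative: proving that a single edge inside a claimed class $V_{i}$, or a single missing edge between $V_{i}$ and $V_{j}$, already spawns many copies of $K_{k+1}[t]$. This is where the large minimum degree $\delta(G^{*})\ge(1-1/k-\eta)n^{*}$ is used essentially, driving a greedy blowup-embedding with $\Omega(n)$ choices at each of the $(k+1)t$ embedding steps after iteratively intersecting common neighborhoods. The secondary subtlety is the bookkeeping of the $O(\eta n^{2})$ error terms through cleaning, partitioning, class-size rebalancing, and extension back to $V(G)$, which dictates the precise functional dependence $\sigma=\sigma(\varepsilon)$ required by the statement.
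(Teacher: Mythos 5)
The paper does not prove this lemma; it is stated as the classical Erd\H{o}s--Simonovits stability theorem and cited directly from \cite{Erd1966Sta1,Erd1966Sta2,S1968}, then used as a black box in the proof of Theorem \ref{sst}. So there is no in-paper proof to compare your argument against, and I will instead assess the sketch on its own terms.

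Your overall plan (reduce to $F=K_{k+1}[t]$, clean to large minimum degree via Erd\H{o}s--Stone--Simonovits, extract a canonical $k$-partition, then bound edits) is the right outline, but the classification step has a genuine gap. You anchor on a single copy of $K_k=\{u_1,\dots,u_k\}$ and set $I(v)=\{i:u_i\not\sim v\}$. From $\delta(G^{*})\ge(1-\tfrac{1}{k}-\eta)n^{*}$ one only gets $\sum_{v}|I(v)|\le (1+k\eta)n^{*}$, not $k\eta n^{*}$: each $u_i$ has up to $(\tfrac{1}{k}+\eta)n^{*}$ non-neighbours, and summing over $i$ already produces a term of order $n^{*}$ coming from the ``good'' vertices with $|I(v)|=1$. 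More importantly, with the corrected count the inequality only gives $|\{v:|I(v)|\ge 2\}|-|\{v:|I(v)|=0\}|\le k\eta n^{*}$, so it does not bound either quantity alone. Vertices with $I(v)=\emptyset$, i.e.\ adjacent to the entire anchor clique, are simply unclassified in your scheme, and they can be plentiful because $G^{*}$ is only $K_{k+1}[t]$-free, not $K_{k+1}$-free; a single $K_{k+1}$ is perfectly legal, so nothing stops a linear number of vertices from seeing all of $u_1,\dots,u_k$. Your appeal to ``a greedy embedding inside $N(u_1)\cap\cdots\cap N(u_k)$'' is aimed at the wrong set (you apply it to $|I(v)|\ge 2$ vertices), and in any case from one extra common neighbour you cannot immediately manufacture a $K_{k+1}[t]$. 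The standard repair, which is exactly what Erd\H{o}s and Simonovits do, is to anchor on a blowup $K_k[s]$ for a large constant $s$ rather than a single $K_k$ (its existence follows from Lemma \ref{lem-BE1973} applied to $G^{*}$), classify $v$ by which part of the anchor it almost misses, and use the $K_{k+1}[t]$-freeness to show that vertices which are dense to \emph{all} $k$ parts of the anchor are $o(n)$ in number --- otherwise one assembles a $K_{k+1}[t]$ by intersecting common neighbourhoods inside the anchor. Once the anchor is a blowup, your supersaturation bookkeeping for intra-class edges and missing cross-edges goes through in the way you describe.
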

}

\section{Proof of Theorems \ref{thm-1-2} and \ref{lsES}} 

\label{sec-4}

We now prove Theorem \ref{thm-1-2} by using Theorem \ref{h1} and Lemma \ref{lem-super-cliques}. 
  
\begin{proof}[{\bf Proof of Theorem \ref{thm-1-2}}]
Suppose that $G$ is an $n$-vertex graph with 
{$q(G)\ge (1- \frac{1}{k}+ {\varepsilon})2n$}. 
We start with the proof in two cases. 
    If $\delta (G) > (1- \frac{1}{k}+ \frac{\varepsilon}{2})n$, 
    then 
    \[ e(G)\geq \frac{n}{2}\delta(G)> 
\left(1-\frac{1}{k}+\frac{\varepsilon}{2} \right) \frac{n^2}{2}. \]  
By Lemma \ref{lem-super-cliques}, 
we can find $\Omega_{k,\varepsilon}(n^{k+1})$ copies of $K_{k+1}$ in $G$.  
Now, we assume that $\delta(G)\leq (1 - \frac{1}{k} + \frac{\varepsilon}{2})n$.  In this case, we shall apply 
Theorem \ref{h1} to obtain a subgraph with larger minimum degree. 
We denote $\gamma=1-1/k + \varepsilon \geq 1/2+ \varepsilon$, 
$\alpha= \varepsilon /2$, $\beta= \varepsilon $ and $s=0$. Applying Theorem \ref{h1}, it follows that $G$ contains an induced subgraph $H$ on $|H|>6^{-6/\varepsilon^{2}} n$ vertices satisfying
\[ q(H)>2\gamma |H| \] 
and 
\[ \delta(H)>(\gamma-\alpha)|H|= 
\left(1-\frac{1}{k}+\frac{\varepsilon}{2} \right)|H|. \]  
Therefore, we have $e(H) \geq \frac{|H|}{2}\delta(H)> (1-\frac{1}{k}+\frac{\varepsilon}{2} )\frac{|H|^{2}}{2}$. 
By Lemma \ref{lem-super-cliques} again, $H$ contains $\Omega_{k,\varepsilon} (|H|^{k+1})$ copies of $K_{k+1}$. 
Note that $|H|> 6^{-6/\varepsilon^{2}} n$. Then $G$ contains $\Omega_{k,\varepsilon}(n^{k+1})$ copies of $K_{k+1}$. 
\end{proof}

In what follows, we give the proof of Theorem \ref{lsES} by using Lemma \ref{lem-BE1973}.

\begin{proof}[{\bf Proof of Theorem \ref{lsES}}] 
If $G$ has the minimum degree $\delta(G)> (1-\frac{1}{k}+\frac{\varepsilon}{2} )n$,
then 
\[ e(G)\geq \frac{n}{2}\delta(G)> 
\left(1-\frac{1}{k}+\frac{\varepsilon}{2} \right) \frac{n^2}{2}. \]  
By Lemma \ref{lem-BE1973}, 
$G$ contains a copy of $K_{k+1}[t]$ with $t=\Omega_{k,\varepsilon} (\log n)$. 
In the following, we assume that $\delta(G)\leq (1 - \frac{1}{k} + \frac{\varepsilon}{2})n$.  In this case, we apply 
Theorem \ref{h1} to obtain a dense subgraph. 
We denote $\gamma=1-1/k + \varepsilon \geq 1/2+ \varepsilon$, 
$\alpha= \varepsilon /2$, $\beta= \varepsilon $ and $s=0$. Applying Theorem \ref{h1}, it follows that $G$ contains an induced subgraph $H$ on $|H|>6^{-6/\varepsilon^{2}} n$ vertices satisfying
\[  q(H)>2\gamma |H| \] 
and 
\[  \delta(H)>(\gamma-\alpha)|H|= 
\left(1-\frac{1}{k}+\frac{\varepsilon}{2} \right)|H|. \]  
Therefore, we have $e(H) \geq \frac{|H|}{2}\delta(H)> (1-\frac{1}{k}+\frac{\varepsilon}{2} )\frac{|H|^{2}}{2}$. 
By Lemma \ref{lem-BE1973} again, we find that $H$ contains a copy of $K_{k+1}[t]$ with $t=\Omega_{k,\varepsilon}(\log |H|) = \Omega_{k,\varepsilon} (\log n)$, as desired.  
\end{proof}

\noindent 
{\bf Remark.}
We point out here that Theorem \ref{lsES} can also be deduced from Theorem \ref{thm-1-2} together with a theorem of Nikiforov \cite{Niki2008blms}, which says that every $n$-vertex graph with $\varepsilon n^{k+1}$ copies of $K_{k+1}$ contains a copy of $K_{k+1}[t]$ of size $t=\Omega_{k,\varepsilon}(\log n)$. 
A recent result of Brada\v{c} et al. \cite{BLWX2024} states that every incomparable graph of order $n$ with $\varepsilon n^{k+1}$ copies of $K_{k+1}$ contains a copy of $K_{k+1}[t]$ with $t=\Omega_{k,\varepsilon}(\frac{n}{\log n})$. Combining this result with Theorem \ref{thm-1-2},  
we obtain that any incomparable graph of order $n$ with $q(G)\ge (1- \frac{1}{k} + \varepsilon)2n$ contains a copy of $K_{k+1}[t]$ with $t=\Omega_{k,\varepsilon}(\frac{n}{\log n})$.

\medskip 
The following example shows that the bounds 
in both Theorems \ref{sES} and \ref{lsES} are optimal. 

\begin{example}
    \label{exam-2-4}
    Given $k\ge 2$, $0< \varepsilon < 1/(2k^2) $ and $n\ge 2k/\varepsilon$,  
there exists an $n$-vertex graph $G$ with 
$q(G)\ge (1- \frac{1}{k} + \varepsilon)2n$, 
but $G$ does not contain a copy of $K_{k+1}[t]$ 
with $t=\big\lceil\frac{2\log (n/k)}{\log (1/2\varepsilon k^2)}\big\rceil$.  
\end{example} 

\begin{proof} 
The proof uses the standard probabilistic method.  
We start with the $k$-partite Tur\'{a}n graph 
$T_{n,k}$, where $k$ divides $n$. 
Let $U$ be a partite set of $T_{n,k}$ of size $n/k$.  
Our strategy is to add $\frac{1}{2}\varepsilon n^2$ extra edges within $U$ to obtain a new graph $G$ such that $G[U]$ does not contain a copy of $K_{t,t}$. 

We choose edges inside $U$ independently with probability $p$, where the value of $p$ will be determined later. 
Let $X$ be the number of edges and 
$Y$ be the number of copies of $K_{t,t}$ created in $U$.  
Next, we are going to compute the expectations  $\mathbb{E}[X]$ and 
$\mathbb{E}[Y]$.  
Setting $p:=2\varepsilon k^2 <1$, we have 
\[   \mathbb{E}[X] = p {n/k \choose 2} >  \frac{1}{2}\varepsilon n^2 + 1, \] 
where the inequality holds by $n\ge 2k/ \varepsilon$. 
On the other hand, we would like to require  
\[  \mathbb{E}[Y] = \frac{1}{2} {n/k \choose t} {n/k -t \choose t} p^{t^2} 
< \frac{1}{2}\left(\frac{n}{k} \right)^{2t} (2\varepsilon k^2)^{t^2} < 1. \]
By direct computations, it suffices to require $2t \log \frac{n}{k} + 
t^2 \log (2 \varepsilon k^2) \le 0$, which can be achieved by 
setting $t:= \big\lceil\frac{2\log ({n}/{k})}{ \log ({1}/{2\varepsilon k^2})}\big\rceil$. 
By the linearity of expectation, 
we get 
\[ \mathbb{E}[X-Y]  =\mathbb{E}[X]-\mathbb{E}[Y] 
 > \frac{1}{2}\varepsilon n^2 . \]   
 Therefore, there is a distribution of edges within $U$ such that $X-Y> \frac{1}{2}\varepsilon n^2$. 
 Removing an edge from each copy of $K_{t,t}$, we get a $K_{t,t}$-free graph of order $n/k$ on $U$ with more than $\frac{1}{2}\varepsilon n^2$ edges. 
 Note that $U$ is a partite set of $T_{n,k}$. 
Hence, the resulting graph $G$ is a $K_{k+1}[t]$-free graph of order $n$. 
Since $e(G) > (1- \frac{1}{k}) \frac{n^2}{2} + \frac{1}{2}\varepsilon n^2$, 
it follows that $q(G)\ge \frac{4e(G)}{n} > (1-\frac{1}{k} + \varepsilon )2n $. 
\end{proof}

\section{Proof of Theorems \ref{thm-h3}, \ref{thm-q-js} and \ref{thm-genelized-books}}

\label{sec-5}

The primary advantage of Lemma \ref{h2} lies in deriving the bound on the size $t=\Omega_k(\log n)$ from two simpler conditions: the presence of a clique $K_{k+1}$ and a sufficiently large minimum degree. While these conditions may not necessarily hold in the original graph $G$, Theorem \ref{h1} ensures the existence of a large subgraph $H$ where they are satisfied. By applying Lemma \ref{h2} to the subgraph $H$, we thereby obtain the desired bound on the partite sets.

\begin{proof}[{\bf Proof of Theorem \ref{thm-h3}}]
Let $G$ be a graph of order $n$ with  $q(G)\ge q( T_{n,k})$ and $G\neq T_{n,k}$. 
By (\ref{eq-q-Turan}), we know that $G$ contains a copy of $K_{k+1}$. If $\delta(G)>(1- 
\frac{1}{k} - \frac{1}{k^{4}})n$, 
then by Lemma \ref{h2}, we obtain that $G$ contains a copy of $K_{k}^{+}[t]$ with $t=\Omega_k(\log n)$. 
In what follows, we consider the case 
$\delta(G)\leq (1- \frac{1}{k} - \frac{1}{k^4})n$. 
 Using the fact that
$e( T_{n,k})\geq(1- \frac{1}{k})\frac{n^{2}}{2} - \frac{k}{8}$, we obtain 
\[  q(T_{n,k})\ge \frac{4e(T_{n,k})}{n}
\ge 2\left(1- \frac{1}{k} \right)n- \frac{k}{2n}. \] 
We denote $\gamma=1-1/k\geq 2/3$, $\alpha=1/k^{4}$, $\beta=1/6$ and $s=k/2$. 
Then 
$$q(G)>q( T_{n,k})\ge 2\gamma n-s/n.$$ 
Applying Theorem \ref{h1}, we know that $G$ contains an induced subgraph $H$ with  $|H|>6^{-18k^{4}} n$ vertices such that   
\[ \delta(H)>(\gamma-\alpha)|H| = 
\left(1- \frac{1}{k} - \frac{1}{k^4} \right)|H| \] 
and 
\[ q(H)>2\gamma |H|=2\left(1- \frac{1}{k}\right)|H|. \]  
By (\ref{eq-q-Turan}),  
we know that $H$ must contain a copy of $K_{k+1}$. 
By Lemma \ref{h2} again, we get that $H$ contains a copy of 
$K_{k}^{+}[t]$ with $t=\Omega_k( \log |H|)$.  Note that $|H|= \Omega_k(n)$. Then 
$G$ contains a copy of $K_k^+[t]$ with $t=\Omega_k (\log n)$. 
\end{proof}

The following example shows that 
the bound in Theorem \ref{thm-h3} is tight up to a constant factor. 

\begin{example} \label{exam-2-6}
For every integers $k\ge 2$ and $n>3k$, 
    there exists a graph $H$ on $n$ vertices 
with $q(H)> q(T_{n,k})$, while  
$H$ does not contain a copy of $K_{k}^+[t]$ where $t= \big\lceil\frac{2\log (2n/k)}{\log (3/2)} \big\rceil$.  
\end{example}

\begin{proof}
(Using the probabilistic method).  
Our strategy is to replace the complete bipartite graph formed between two vertex classes of the Tur\'{a}n graph $T_{n,k}$ 	
  with a graph with the same density (not necessarily bipartite) that does not contain a large $K_{t,t}$. 
We may assume that $k$ divides $n$. 
We consider the random graph $G(2n/k,p)$, a graph on $2n/k$ vertices, 
where each edge is presented independently with probability $p:=2/3$. 
Then we have 
\[ \mathbb{E}[e(G)] = p {2n/k \choose 2} > 
\left(\frac{n}{k} \right)^2 +1. \]  
Let $t:=\big\lceil\frac{2\log (2n/k)}{\log (3/2)}\big\rceil$ and $\# K_{t,t}$ be the number of copies of $K_{t,t}$ in $G$. Then 
\[  \mathbb{E}[\# K_{t,t}] = \frac{1}{2} {2n/k \choose t} {2n/k -t \choose t} p^{t^2} 
<  \frac{1}{2} \left(\frac{2n}{k} \right)^{2t} 
\left(\frac{2}{3} \right)^{t^2} < 1. \]  
The linearity of expectation implies 
\[ \mathbb{E}[e(G)- \#K_{t,t}] = 
\mathbb{E}[e(G)] - \mathbb{E}[\#K_{t,t}] 
 > \left(\frac{n}{k} \right)^2.\]   
 Thus, there exists 
a graph $G$ of order $2n/k$ in which $e(G)- \#K_{t,t} > (\frac{n}{k})^2$. 
By removing one edge 
from each copy of $K_{t,t}$ in $G$, we get a $K_{t,t}$-free subgraph $G'$ of order $2n/k$ 
with $e(G')> (\frac{n}{k})^2$. Now, 
we consider the graph $H=G' \vee T_{n-{2n}/{k},k-2}$, 
which is obtained by joining every vertex of $G'$ to every vertex of the $(k-2)$-partite Tur\'{a}n graph $T_{n-{2n}/{k},k-2}$. 
Clearly, $H$ has order $n$ and $e(H)> (1- \frac{1}{k})\frac{n^2}{2}$, which yields $q(G)> (1- \frac{1}{k})2n \geq q(T_{n,k})$. 
Since $G'$ has no copy of $K_{t,t}$ with $t=\big\lceil\frac{2\log (2n/k)}{\log (3/2)}\big\rceil$, we know that $H$ contains no copy of $K_k[t]$ and no copy of $K_k^+[t]$ as well. 
\end{proof}

In what follows, we give the proof of Theorem \ref{thm-q-js}. 

\begin{proof}[{\bf Proof of Theorem \ref{thm-q-js}}]
    Let $G$ be a graph of order $n$ with  $q(G)\ge q( T_{n,k})$ and $G\neq T_{n,k}$. 
By (\ref{eq-q-Turan}), we know that $G$ contains a copy of $K_{k+1}$. If $\delta(G)>(1- 
\frac{1}{k} - \frac{1}{k^{4}})n$, 
then by Lemma \ref{lem-lem6}, we get 
\[ js_{k+1}(G)=\Omega_k(n^{k-1}).\]  
Now, we consider the case 
$\delta(G)\leq (1- \frac{1}{k} - \frac{1}{k^4})n$. 
It is known that $q(T_{n,k}) \ge 2(1- \frac{1}{k})n- \frac{k}{2n}
$. We denote $\gamma=1-1/k\geq 2/3$, $\alpha=1/k^{4}$, $\beta=1/6$ and $s=k/2$. 
Then 
$$q(G)>q( T_{n,k})\ge 2\gamma n-s/n.$$ 
Applying Theorem \ref{h1}, we obtain that $G$ contains an induced subgraph $H$ with $|H|>6^{-18k^{4}} n$ vertices such that   
\[ \delta(H)>(\gamma-\alpha)|H| = 
\left(1- \frac{1}{k} - \frac{1}{k^4} \right)|H| \] 
and 
\[ q(H)>2\gamma |H|=2 \left(1- \frac{1}{k}\right)|H|. \]  
By (\ref{eq-q-Turan}),  
we know that $H$ contains a copy of $K_{k+1}$. 
By Lemma \ref{lem-lem6} again, we find that 
$js_{k+1}(H)=\Omega_k(|H|^{k-1})$. 
Since $|H|=\Omega_k(n)$, we know that 
$js_{k+1}(G)=\Omega_k (n^{k-1})$, as desired. 
\end{proof}

Next, we show that 
Theorem \ref{thm-genelized-books} is a direct consequence of Theorem \ref{thm-q-js}. 

\begin{proof}[{\bf Proof of Theorem \ref{thm-genelized-books}}]
 Applying Theorem \ref{thm-q-js}, 
we know that $G$ contains a large $(k+1)$-joint of size $js_{k+1}(G)=\Omega_k(n^{k-1})$. 
In other words, there exists an edge $\{u,v\}\in E(G)$ such that $G$ contains $\Omega_k(n^{k-1})$ copies of $K_{k+1}$ sharing $\{u,v\}$. 
 We consider the induced subgraph $G'=G[N(u)\cap N(v)]$. 
 Each copy of $K_{k+1}$ in the joint of $G$  corresponds to a copy of $K_{k-1}$ of $G'$. 
 Thus, it follows that 
 $G'$ contains $\Omega_k(n^{k-1})$ copies of 
 $K_{k-1}$. Clearly, $G'$ has at most $O_k(n^{k-2})$ copies of $K_{k-2}$. 
 By the pigeonhole principle, there are 
 $\Omega_k(n)$ copies of $K_{k-1}$ in $G'$ that share a common $K_{k-2}$. 
 Combining with the edge $\{u,v\}$, 
 we see that $G$ contains $\Omega_k(n)$ copies of $K_{k+1}$ that share a common $K_k$. In other words, $G$ contains a copy of $B_{k,t}$ 
 with $t=\Omega_k(n)$, as needed. 
\end{proof}

\section{Proof of Theorem \ref{sst}}

\label{sec-6}

Now, we prove the stability theorem for the signless Laplacian spectral radius. 

\begin{proof}[{\bf Proof of Theorem \ref{sst}}]
For any number $\varepsilon>0$, we know from Lemma \ref{est} that there exist $\sigma_{1}>0$ and $n_{1}$ such that if
$G$ is an $F$-free graph on $n\ge n_1$ vertices with 
$e(G)\geq(1- \frac{1}{k} -\sigma_{1})\frac{n^{2}}{2}$, then
$G$ can be obtained from $T_{n,k}$ by adding and deleting at most $\varepsilon n^2$ edges.  

We denote $t:=1- \frac{1}{k}\geq \frac{2}{3}$, 
$ \sigma := \frac{\alpha\beta t}{16+2\alpha\beta}$ and $\gamma :=t-\sigma$, where $\alpha>0$ and $\beta>0$ are sufficiently small numbers such that $\sigma < 0.001$, $ \frac{31}{48}\le \gamma <1$ and  
\begin{equation}  \label{2-1}
    (\gamma -\alpha)(1-\beta)^{2} 
  = (t-\sigma - \alpha)(1- \beta)^2 \geq t-\sigma_{1}. 
\end{equation}  
Assume that $G$ is an $n$-vertex $F$-free graph with $q(G)\geq 2\gamma n$.  
If $\delta(G)>(\gamma-\alpha)n$, then
\[ e(G)\geq \frac{n\delta(G)}{2}> (\gamma-\alpha)\frac{n^{2}}{2}\geq (t-\sigma_{1}) \frac{n^{2}}{2}, \] 
where the last inequality holds by (\ref{2-1}).  
Applying  Lemma \ref{est}, we know that $G$ can be obtained from $T_{n,k}$ by adding and deleting at most $\varepsilon n^2$ edges. 
Next, we assume that $\delta(G)\leq(\gamma-\alpha)n$. Recall that $q(G)\geq2\gamma n$. Setting $s=0$ in Theorem \ref{h1-two-cases},  
we obtain that for sufficiently large $n$, there exists an induced subgraph $H\subseteq G$ with $|H|>(1-\beta) n$ and  satisfying
one of the following statements:
\begin{itemize}
\item[(i)]
$q(H)>2\gamma(1+\alpha\beta/8)|H|;$
\item[(ii)]
$q(H)>2\gamma |H|~~\mbox{and}~~\delta(H)>(\gamma-\alpha)|H|.$
\end{itemize}
Assume that (i) holds. It follows from Theorem \ref{eq-q-ESS} that for sufficiently large $n$, 
$$\frac{\mathrm{ex}_{q}(n,F)}{n}<2t \bigg(1+\frac{\alpha\beta }{16}\bigg).$$
 Note that $H$ is $F$-free and $|H|> (1- \beta )n$. Then we have 
$$2(t-\sigma)\left( 1+ \frac{\alpha\beta}{8} \right)=2\gamma 
\left(1+ \frac{\alpha\beta}{8} \right)\leq\frac{q(H)}{|H|}\leq
\frac{\mathrm{ex}_{q}(|H|,F)}{|H|}<2t \bigg(1+\frac{\alpha\beta}{16}\bigg).$$
Simplifying the above inequality gives  $\sigma>\alpha\beta t/(16+2\alpha\beta)$, which contradicts with the assumption on $\sigma$.  
This means that the statement in (i) does not happen. 
Now, we assume that (ii) holds.
By $|H|>(1-\beta)n$ and $\delta (H) > (\gamma - \alpha )|H|$, we have
$$e(G)\geq e(H)\geq \frac{|H|\delta(H)}{2}> (\gamma-\alpha)(1-\beta)^{2} \frac{n^{2}}{2}\geq (t-\sigma_{1}) \frac{n^{2}}{2},$$
where the last inequality holds by (\ref{2-1}). Using Lemma \ref{est}, we obtain that $G$ can be obtained from $T_{n,k}$ by adding and deleting at most $\varepsilon n^2$ edges. This completes the proof. 
\end{proof}

\section{Proof of Theorem \ref{thm-2-14}}

\label{sec-7}

Note that every graph $F$ with $\chi (F)=k+1$ can be contained in the blowup $K_{k+1}[t]$ with a large integer $t$. To prove Theorem \ref{thm-2-14}, 
we show a stronger result as follows. 

\begin{theorem} \label{thm-degree-1}
    Let $k\ge 2$ and $\varepsilon >0$. 
    If $G$ is a graph on $n$ vertices with $m$ edges such that 
    \[ \sum_{v\in V(G)}d^2(v) \ge 
    2\left(1- \frac{1}{k} +\varepsilon \right) mn, \] 
    then $G$ contains $\Omega_{k,\varepsilon}(n^{k+1})$ copies of $K_{k+1}$, and a copy of $K_{k+1}[t]$ with $t=\Omega_{k,\varepsilon}(\log n)$. 
\end{theorem}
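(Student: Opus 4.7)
The plan is to deduce Theorem \ref{thm-degree-1} from Theorems \ref{thm-1-2} and \ref{lsES} by showing that the degree-power hypothesis implies the signless-Laplacian spectral condition needed to invoke them. Specifically, I would prove the folklore inequality
\[
q(G) \ \ge\ \frac{\sum_{v\in V(G)} d^2(v)}{m},
\]
from which the hypothesis $\sum_v d^2(v)\ge 2(1-\tfrac{1}{k}+\varepsilon)mn$ immediately yields $q(G)\ge 2(1-\tfrac{1}{k}+\varepsilon)n$. Then applying Theorem \ref{thm-1-2} delivers $\Omega_{k,\varepsilon}(n^{k+1})$ copies of $K_{k+1}$, while Theorem \ref{lsES} delivers a copy of $K_{k+1}[t]$ with $t=\Omega_{k,\varepsilon}(\log n)$.

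To establish the inequality, I would use the Rayleigh characterization $q(G)=\max_{\mathbf{x}\neq \mathbf{0}}\frac{\sum_{uv\in E}(x_u+x_v)^2}{\sum_v x_v^2}$ with the test vector $x_v=d(v)$. The handshake-type identity $\sum_{v}d^2(v)=\sum_{uv\in E}(d(u)+d(v))$, obtained by double-counting $d(v)\cdot|N(v)|$ at every vertex, combined with the Cauchy--Schwarz inequality
\[
\Bigl(\sum_{uv\in E}(d(u)+d(v))\Bigr)^{\!2} \ \le\ m\cdot \sum_{uv\in E}(d(u)+d(v))^2,
\]
gives $\sum_{uv\in E}(d(u)+d(v))^2\ge(\sum_v d^2(v))^2/m$, and dividing by the denominator $\sum_v x_v^2=\sum_v d^2(v)$ yields the claimed lower bound on $q(G)$.

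There is essentially no obstacle: the whole argument is a two-step reduction once the Rayleigh-quotient bound is written down. As a sanity check on the advertised generalization of Bollob\'{a}s--Erd\H{o}s, note that Cauchy--Schwarz also gives $\sum_v d^2(v)\ge (\sum_v d(v))^2/n=4m^2/n$, so any graph with $e(G)\ge (1-\tfrac{1}{k}+\varepsilon)\tfrac{n^2}{2}$ automatically satisfies our hypothesis; hence Theorem \ref{thm-degree-1} is strictly stronger than the Bollob\'{a}s--Erd\H{o}s conclusion on blowups of cliques in dense graphs. The only parameter to monitor is that the constants $c_{k,\varepsilon}$ hidden in the two $\Omega$-bounds are inherited verbatim from Theorems \ref{thm-1-2} and \ref{lsES}, so no new quantitative work is required.
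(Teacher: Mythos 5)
Your proof is correct and follows essentially the same route as the paper: both reduce the degree-power hypothesis to the spectral hypothesis via the inequality $q(G)\ge \frac{1}{m}\sum_v d^2(v)$ and then invoke Theorems~\ref{thm-1-2} and~\ref{lsES}. The only difference is that the paper simply cites this inequality from the literature, while you give a clean self-contained derivation via the Rayleigh quotient with test vector $x_v=d(v)$, the identity $\sum_v d^2(v)=\sum_{uv\in E}(d(u)+d(v))$, and Cauchy--Schwarz, which is a nice touch but does not change the structure of the argument.
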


\begin{proof}
In our argument, we need to use a lower bound on $q(G)$.
This bound can be found in \cite[Theorem 2.1]{LL2009-Zagreb} or \cite[Lemma 3]{Zhou2010}, and it states that
\begin{equation} \label{eq-signless}
  q (G) \ge \frac{1}{m} \sum_{v\in V(G)} d^2(v),
  \end{equation}
where the equality holds if and only if  $d(u)+d(v)$ are equal for
any $uv \in E(G)$.
Assume that $G$ is a graph with $\sum_{v\in V(G)}d^2(v) \ge 2 (1- \frac{1}{k} + \varepsilon)mn$. 
It follows from (\ref{eq-signless}) that 
$q(G)\ge (1- \frac{1}{k} + \varepsilon)2n$. 
Thus, the desired results are followed by applying Theorems \ref{thm-1-2} and \ref{lsES}. 
\end{proof}

Following a similar framework of the above proof, 
we can prove the following result 
 by combining (\ref{eq-signless}) with Theorems \ref{thm-h3}, \ref{thm-q-js} and \ref{thm-genelized-books}. 
This extends the Nikiforov--Rousseau result (\ref{thm-NR-degree}).

\begin{theorem} \label{thm-degree-2}
    If $k\ge 3$ is an integer and $G$ is a graph on $n$ vertices with $m$ edges such that  
      \[ \sum_{v\in V(G)}d^2(v) > 
    2\left(1- \frac{1}{k} \right) mn,\]  
    then $G$ contains a copy of $K_k^+[t]$ with 
    $t=\Omega_k (\log n)$, a joint of size $js_{k+1}(G)=\Omega_k(n^{k-1})$, and a generalized book $B_{k,t}$ with $t=\Omega_k(n)$. Moreover, the bounds are optimal up to a constant factor. 
\end{theorem}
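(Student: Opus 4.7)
The plan is to derive Theorem \ref{thm-degree-2} by the same reduction used for Theorem \ref{thm-degree-1}: convert the degree-power hypothesis into a lower bound on $q(G)$, and then quote the three structural results already proved (Theorems \ref{thm-h3}, \ref{thm-q-js}, \ref{thm-genelized-books}).

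First I would invoke the standard inequality $q(G) \ge \tfrac{1}{m}\sum_{v\in V(G)} d^2(v)$ recorded in \eqref{eq-signless}. Under the hypothesis $\sum_v d^2(v) > 2(1-\tfrac{1}{k})mn$, this immediately gives $q(G) > (1-\tfrac{1}{k})2n$. Since \eqref{eq-q-Turan} tells us $q(T_{n,k}) \le (1-\tfrac{1}{k})2n$, we obtain $q(G) > q(T_{n,k})$, and in particular $G \ne T_{n,k}$. Therefore the hypotheses of Theorems \ref{thm-h3}, \ref{thm-q-js}, and \ref{thm-genelized-books} are all satisfied, and each yields the corresponding conclusion: a copy of $K_k^+[t]$ with $t = \Omega_k(\log n)$, a joint with $js_{k+1}(G) = \Omega_k(n^{k-1})$, and a generalized book $B_{k,t}$ with $t = \Omega_k(n)$.

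For optimality, I would verify that the constructions already used for Theorems \ref{thm-h3}, \ref{thm-q-js}, \ref{thm-genelized-books} also saturate the degree-power hypothesis. Concretely, assume $k \mid n$ and let $G$ be obtained from $T_{n,k}$ by placing a single edge $e$ inside one partite set. Then $m = e(T_{n,k}) + 1$, only the two endpoints of $e$ have degree $(1-\tfrac{1}{k})n + 1$, and all other vertices have degree $(1-\tfrac{1}{k})n$. A short calculation gives
\[
\sum_{v\in V(G)} d^2(v) \;-\; 2\Bigl(1-\tfrac{1}{k}\Bigr)mn \;=\; 2\Bigl(1-\tfrac{1}{k}\Bigr)n + 2 \;>\; 0,
\]
so the hypothesis holds strictly, while $js_{k+1}(G) = (n/k)^{k-1}$ and the largest generalized book in $G$ has size $n/k$, matching the conclusions up to a constant factor. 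For the optimality of the $\log n$ bound on $K_k^+[t]$, I would use the randomized replacement inside a single partite set from Example \ref{exam-2-6}; the resulting graph has edge count exceeding $(1-\tfrac{1}{k})\tfrac{n^2}{2}$ only by an additive term of order $n^2$, and a similar short computation shows $\sum_v d^2(v) > 2(1-\tfrac{1}{k})mn$, while the construction contains no $K_k^+[t]$ with $t = \omega(\log n)$.

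There is no serious obstacle: both the reduction via \eqref{eq-signless} and the verification of the degree-power condition on the extremal constructions are essentially one-line computations. The only mildly delicate point is making sure the strict inequality in the hypothesis translates correctly to $q(G) > q(T_{n,k})$ when $k \nmid n$, but this is automatic from \eqref{eq-q-Turan} and the fact that $q(T_{n,k}) = (1-\tfrac{1}{k})2n - O(1/n)$.
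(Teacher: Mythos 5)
Your proposal is correct and matches the paper's approach: the paper proves Theorem \ref{thm-degree-2} by exactly this reduction, applying the bound $q(G)\ge \frac{1}{m}\sum_v d^2(v)$ from \eqref{eq-signless} to get $q(G) > 2(1-\tfrac{1}{k})n \ge q(T_{n,k})$ and then invoking Theorems \ref{thm-h3}, \ref{thm-q-js}, and \ref{thm-genelized-books}. One small remark that would streamline your optimality discussion: since $\sum_v d^2(v) \ge 4m^2/n$ by Cauchy--Schwarz, the hypothesis $\sum_v d^2(v) > 2(1-\tfrac{1}{k})mn$ is automatically satisfied by any graph with $m > (1-\tfrac{1}{k})\tfrac{n^2}{2}$, so all three constructions (including the one from Example \ref{exam-2-6}) satisfy the degree-power condition without needing the explicit per-vertex computation.
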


Observe that Theorem \ref{thm-2-14} follows immediately by Theorem \ref{thm-degree-1}.   
In what follows, we give an alternative proof of Theorem \ref{thm-2-14}. 
This proof relies on a recent result of Li, Liu and Zhang \cite{LLZ2025-stability} for the adjacency spectral radius, instead of the signless Laplacian spectral radius.  

\begin{proof}[{\bf Alternative proof of Theorem \ref{thm-2-14}}]
The well-known Hofmeistar inequality asserts that
\begin{equation*}
  \lambda^2 (G) \ge \frac{1}{n} \sum_{v\in V(G)} d^2(v),
  \end{equation*}
where the equality holds if and only if $G$ is either regular or bipartite semi-regular. 

Let $F$ be a graph on $s$ vertices with chromatic number $\chi (F)=k+1\ge 3$. 
We may assume that $G$ is an $F$-free graph that achieves the maximum degree power. 
Let $t:=\lfloor \frac{n}{s-1} \rfloor$ and $G'$ be a graph with $V(G')=V_0\cup V_1\cup \cdots V_t$, where $|V_1|=\cdots = |V_t|=s-1$ and $0\le |V_0|\le s-1$, and $G'[V_i]$ is a clique of $G'$ for every $0\le i\le t$. Each component $G'[V_i]$ has at most $s-1$ vertices, so $G'$ is $F$-free. 
Note that $\sum_{v\in V(G')}d^2(v)\ge \lfloor \frac{n}{s-1}\rfloor \cdot (s-1) \cdot (s-2)^2 = \Omega(n)$. 
By the maximality of $G$, we have
$\Omega (n) \le \sum_{v\in V(G)}d^2(v) \le \bigl( 
\sum_{v\in V(G)} d(v) \bigr)^2= 4m^2$. 
So $m$ can be large enough as $n\to \infty$. 

Recently, Li, Liu and Zhang  \cite{LLZ2025-stability} investigated the Brualdi--Hoffman--Tur\'{a}n type problem. 
It was shown in \cite{LLZ2025-stability} that if $m$ is sufficiently large and 
$G$ is an $F$-free graph with $m$ edges, then
\begin{equation*}
  \lambda^2(G) \le \left( 1-\frac{1}{k} + o(1) \right)2m,
  \end{equation*}
Therefore,
it follows that $\sum_{v\in V(G)} d^2(v) \le 2\left(1-\frac{1}{k} + o(1) \right)mn$, as expected.
\end{proof}

\section{Proofs of Theorems \ref{lhslES} and \ref{ESSL}}

\label{sec-8}

Let $H$ be a linear $r$-graph.  The  {\it $s$-shadow} of $H$, denoted by $\partial_{s}H$, is the $s$-graph with vertex set $V(\partial_{s}H)=V(H)$ and edge set $E(\partial_{s}H)=\{\{v_{1},\ldots,v_{s}\}: \{v_{1},\ldots,v_{s}\}\subseteq e\in E(H)\}$.
 In other words, $\partial_{s}H$ is obtained from $H$ by replacing each edge $e$ with a complete $s$-graph on the vertices of $e$. For simplicity, 
 we write $\partial H$ for $\partial_{2}H$. 
 In what follows, we establish the connections of the signless Laplacian spectral radius between a linear hypergraph $H$ and its $s$-shadow graph $\partial_s H$.

\begin{lem}\label{gj}
Let $H$ be a linear $r$-graph. Then
$$q(H)\leq\frac{1}{\binom{r-1}{s-1}}q(\partial_{s} H).$$
In particular, we have $(r-1)q(H)\leq q(\partial H)$.
\end{lem}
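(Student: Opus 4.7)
My plan is to build an admissible test vector for $\partial_{s}H$ from an eigenvector of $H$ and then compare the two Rayleigh-type expressions term by term. Let $\mathbf{x}\ge 0$ with $\|\mathbf{x}\|_{r}=1$ be an eigenvector realizing $q(H)$, and define $\mathbf{y}$ on the same vertex set by $y_{v}:=x_{v}^{r/s}$. Then $\|\mathbf{y}\|_{s}^{s}=\sum_{v} x_{v}^{r}=1$, so $\mathbf{y}$ is admissible in the variational formula for $q(\partial_{s}H)$.

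The diagonal (degree) part is handled using linearity: because any two edges of $H$ meet in at most one vertex, different $r$-edges of $H$ contribute pairwise disjoint families of $s$-subsets to $\partial_{s}H$, so for every vertex $v$ one has
$$d_{\partial_{s}H}(v)=\binom{r-1}{s-1}\, d_{H}(v).$$
Consequently $\sum_{v} d_{\partial_{s}H}(v)\, y_{v}^{s}=\binom{r-1}{s-1}\sum_{v} d_{H}(v)\, x_{v}^{r}$ exactly, which produces the target factor $\binom{r-1}{s-1}$ on the degree side.

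For the edge part I work edge by edge. For each $e\in E(H)$, the contribution of its $s$-subsets to $\sum_{f\in E(\partial_{s}H)} y^{f}$ is the $s$-th elementary symmetric polynomial in the $r$ nonnegative reals $\{x_{v}^{r/s}\}_{v\in e}$. Maclaurin's inequality then gives
$$\sum_{\substack{f\subseteq e\\ |f|=s}}\prod_{v\in f} y_{v}\ \ge\ \binom{r}{s}\Bigl(\prod_{v\in e} y_{v}\Bigr)^{s/r}=\binom{r}{s}\prod_{v\in e} x_{v}=\binom{r}{s}\, x^{e}.$$
Summing over $e\in E(H)$ and using the identity $s\binom{r}{s}=r\binom{r-1}{s-1}$, the quantity $r\sum_{e} x^{e}$ appearing in the expression for $q(H)$ is bounded above by $\binom{r-1}{s-1}^{-1}s\sum_{f\in E(\partial_{s}H)} y^{f}$. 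Combining with the degree identity,
$$q(\partial_{s}H)\ \ge\ \sum_{v} d_{\partial_{s}H}(v)\, y_{v}^{s}+s\sum_{f\in E(\partial_{s}H)} y^{f}\ \ge\ \binom{r-1}{s-1}\, q(H),$$
which rearranges to the stated inequality; the case $s=2$ reads $(r-1)q(H)\le q(\partial H)$ since $\binom{r-1}{1}=r-1$. The only step of real substance is the edge-wise Maclaurin inequality; the remainder is bookkeeping that uses linearity of $H$ to keep the degree counts sharp, and I anticipate no genuine obstacle.
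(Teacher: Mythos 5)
Your proof is correct and follows the same strategy as the paper's: substitute $y_v = x_v^{r/s}$ to turn an eigenvector of $H$ into an admissible test vector for $\partial_s H$, use linearity of $H$ for the exact degree identity $d_{\partial_s H}(v)=\binom{r-1}{s-1}d_H(v)$ and for the disjointness of the $s$-subset contributions, and bound the edge part by a symmetric-function inequality before comparing via the Rayleigh quotient. The only cosmetic difference is that you cite Maclaurin's inequality where the paper chains AM--GM with the power-mean inequality; both deliver the identical edge-wise bound $\binom{r}{s}\,x^{e}\le \sum_{f\subseteq e,\,|f|=s} y^{f}$.
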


\begin{proof}
Let $\mathbf{x}$ be a nonnegative eigenvector corresponding to $q(H)$. 
Then  $\sum_{i\in V(H)} x_i^r =1$ and 
\[  q(H)=\sum_{i\in V(H)}d_{H}(i)x_{i}^{r}+r\sum_{e\in E(H)}x^{e}. \]
For each fixed $i\in V(H)$ and $e\in E(H)$ with $i\in e$, 
there are ${r-1 \choose s-1}$ subsets of $e$ with size $s$ and containing $i$. 
Then $d_H(i)= \frac{1}{{r-1 \choose s-1}}d_{\partial_s H}(i)$. Therefore, we have 
\begin{align}\label{gj1}
\begin{split}
q(H) = \frac{1}{\binom{r-1}{s-1}}\sum_{i\in V(\partial_{s}H)}d_{\partial_{s}H}(i)(x_{i}^{r/s})^{s}
+r\sum_{e\in E(H)}\prod_{\{i_{1},\ldots,i_{s}\}\subset e} 
(x_{i_{1}}\cdots x_{i_{s}})^{\frac{1}{\binom{r-1}{s-1}}}.
 \end{split}
\end{align} 
Using the AM-GM inequality and the Power-Mean inequality, we obtain 
\begin{align}
r\sum_{e\in E(H)}\prod_{\{i_{1},\ldots,i_{s}\}\subset e}(x_{i_{1}}\cdots x_{i_{s}})^{\frac{1}{\binom{r-1}{s-1}}}
&\leq  r\sum_{e\in E(H)}\bigg(\frac{1}{\binom{r}{s}}\sum_{\{i_{1},\ldots,i_{s}\}\subset e}(x_{i_{1}}\cdots x_{i_{s}})^{\frac{1}{\binom{r-1}{s-1}}}\bigg)^{\binom{r}{s}} \notag \\
&\leq  r\sum_{e\in E(H)} \frac{1}{\binom{r}{s}}\sum_{\{i_{1},\ldots,i_{s}\}\subset e}(x_{i_{1}}\cdots x_{i_{s}})^{\frac{\binom{r}{s}}{\binom{r-1}{s-1}}}  \notag \\
&= \frac{s}{\binom{r-1}{s-1}}\sum_{e\in E(H)} \sum_{\{i_{1},\ldots,i_{s}\}\subset e}(x_{i_{1}})^{\frac{r}{s}}\cdots(x_{i_{s}})^{\frac{r}{s}} \notag \\
&= \frac{s}{\binom{r-1}{s-1}} \sum_{\{i_{1},\ldots,i_{s}\}\in E(\partial_{s} H)}(x_{i_{1}})^{\frac{r}{s}}\cdots(x_{i_{s}})^{\frac{r}{s}}. \label{gj2}
\end{align}
Putting (\ref{gj1}) and (\ref{gj2}) together, we find that
\begin{align*} \notag
q(H)&\leq \frac{1}{\binom{r-1}{s-1}}\bigg(\sum_{i\in V(\partial_{s}H)}d_{\partial_{s}H}(i)(x_{i}^{r/s})^{s}+
s\sum_{\{i_{1},\ldots,i_{s}\}\in E(\partial_{s} H)}(x_{i_{1}})^{\frac{r}{s}}\cdots(x_{i_{s}})^{\frac{r}{s}}\bigg)\\ \notag 
&\leq \frac{1}{\binom{r-1}{s-1}}q(\partial_{s} H),
\end{align*}
where the last inequality follows from $\sum_{i\in V(\partial_{s} H)}(x_{i}^{r/s})^{s}=1$. 
This completes the proof. 
\end{proof}

The following lemma has its root in the recent works 
of Gao, Chang and Hou \cite{GCH2022} 
as well as She et al. \cite{SFKH2022}. 
Since the proof employs a similar line, we omit the details for brevity.  

\begin{lem} \label{lem-return}
Let $k\ge 2,\ell\ge 1$ be fixed integers and $n$ be sufficiently large. 
    If $H$ is a linear $r$-graph and $\partial H$ contains a copy of $K_{k+1}[t]$ with $t=\Omega (\log n)$, then $H$ contains an $r$-expansion of $K_{k+1}[\ell]$. 
\end{lem}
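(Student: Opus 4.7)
The plan is to build the $r$-expansion greedily, vertex by vertex, exploiting the linearity of $H$ to control the number of obstructions at each step. Let $V_1,\ldots,V_{k+1}$ be the parts of the copy of $K_{k+1}[t]$ in $\partial H$, each of size $t=\Omega(\log n)$. By linearity, for every pair $\{u,v\}$ with $u\in V_i$, $v\in V_j$, $i\ne j$, there is a unique hyperedge $e_{uv}\in E(H)$ containing $\{u,v\}$; write $X_{uv}:=e_{uv}\setminus\{u,v\}$ for its $r-2$ extra vertices. I would construct sets $U_i\subseteq V_i$ with $|U_i|=\ell$ such that, writing $U=\bigcup_i U_i$, the sets $X_{uv}$ for cross-part pairs $\{u,v\}\subseteq U$ are pairwise disjoint and disjoint from $U$. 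A short argument using linearity shows that this single invariant already forces the hyperedges $e_{uv}$ to be pairwise distinct: if $e_{uv}=e_{u'v'}$ for two distinct cross-part pairs, then some vertex of $\{u',v'\}$ would lie in $X_{uv}\cap U$, a contradiction. Thus the structure produced is exactly an $r$-expansion of $K_{k+1}[\ell]$.

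For the greedy step, suppose we have selected $U_i$ with $|U_i|=s_i\le \ell$, and write $X^*:=\bigcup X_{uv}$ for the current extra-vertex set. A vertex $v\in V_i\setminus U_i$ is \emph{bad} if adding it to $U_i$ violates the invariant; this falls into three cases: (a) $v\in X^*$; (b) for some new pair $\{u,v\}$ with $u\in U\setminus V_i$, the hyperedge $e_{uv}$ contains a vertex from $U\cup X^*$, equivalently $\{u,v,w\}\subseteq e_{uv}$ for some $w\in (U\cup X^*)\setminus\{u,v\}$; or (c) two new pairs $\{u,v\},\{u',v\}$ with $u,u'\in U\setminus V_i$ lie in a common hyperedge, equivalently $\{u,u',v\}\subseteq e$ for some $e\in E(H)$. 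The key step is to bound each count using linearity: any two vertices of $H$ determine at most one hyperedge, so fixing the auxiliary pair $\{u,w\}$ in case (b) (respectively $\{u,u'\}$ in case (c)) leaves at most $r-2$ options for $v$. Summing gives at most $O_{k,r,\ell}(\ell^3)$ bad vertices in total, a bound that depends only on $k,r,\ell$.

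Since $t=\Omega(\log n)\to \infty$, for sufficiently large $n$ the number of available vertices $|V_i\setminus U_i|\ge t-\ell$ strictly exceeds this bad-count bound, so a legal choice always exists. Iterating the greedy step $(k+1)\ell$ times produces the required $r$-expansion. The main obstacle I foresee is a careful enumeration in case (b), where the bookkeeping of already-built extra vertices interacts delicately with candidate new pairs; the linearity of $H$ is precisely what prevents the count from growing with $n$, since every bad configuration is pinned down by an auxiliary pair of vertices plus at most $r-2$ additional options.
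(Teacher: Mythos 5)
Your greedy cleaning argument is correct: at each step linearity pins every obstruction to an auxiliary pair of already-chosen or already-used vertices together with at most $r-2$ further candidates, so the total number of forbidden choices is bounded by a constant depending only on $k$, $\ell$, $r$, which $t=\Omega(\log n)$ eventually exceeds, and the invariant you maintain (the sets $X_{uv}$ pairwise disjoint and disjoint from $U$) is exactly the definition of an $r$-expansion of $K_{k+1}[\ell]$. The paper itself omits the proof, deferring to \cite{GCH2022} and \cite{SFKH2022}; your construction is precisely the standard greedy/cleaning argument those works use to pass from a blowup in the $2$-shadow to an expansion in the linear hypergraph.
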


With the aid of Lemmas \ref{gj} and \ref{lem-return}, 
we now prove Theorem \ref{lhslES}.

\begin{proof}[{\bf Proof of Theorem \ref{lhslES}}]
Suppose that $H$ is an $r$-uniform hypergraph with $q(H)\ge \frac{2n}{r-1}(1- \frac{1}{k} + \varepsilon)$. 
 By setting $s=2$ in Lemma \ref{gj}, we have
\[ q(\partial H)\geq (r-1)q(H)\geq 
\left(1-\frac{1}{k}+ \varepsilon \right)2n. \] 
Let $n$ be sufficiently large.  Theorem \ref{lsES} implies that $\partial H$ contains a copy of $K_{k+1}[t]$ with $t=\Omega_{k,\varepsilon}(\log n)$. 
By Lemma \ref{lem-return}, we see that $H$ contains an $r$-expansion of $K_{k+1}[\ell]$. 
\end{proof}

To show Theorem \ref{ESSL}, we need to introduce the notion from design theory. 

\begin{definition} \label{defn-8-3}
Let $m,k,r\geq2$ be integers. An $(m,k,r)$-design is a triple $(V,X,\mathcal{B})$ satisfying 
\begin{itemize}
\item[(a)] $V$ is a set of $mk$ elements (called points),
\item[(b)] $X=\{V_{1},\ldots,V_{k}\}$ is a partition of $V$, where every $V_i$ is an $m$-subset (called groups),
\item[(c)] $\mathcal{B}$ is a family of subsets of $V$ of size $r$ (called blocks),

\item[(d)] 
Each block of $\mathcal{B}$ intersects any group of $X$ in at most one point,

\item[(e)] Each $2$-set of $V$ from two distinct groups of $X$ is contained in exactly one block of $\mathcal{B}$.
\end{itemize}
\end{definition}

By the definition, we see that every point of $V$ is contained in exactly $\frac{m(k-1)}{r-1}$ blocks of $\mathcal{B}$, and any two blocks intersect in at most one point. 
So an $(m,k,r)$-design can be regarded as a linear $\frac{m(k-1)}{r-1}$-regular $r$-graph of order $mk$. Furthermore, an $(m,k,r)$-design also is a special group divisible $2$-design of order $mk$. In 2011, Moh\'acsy \cite{Moh2011} characterized the existence of such a design.

\begin{lem}[See \cite{Moh2011}]\label{design}
Let $k$, $r$ be positive integers with $k\geq r\geq2$. Then there exists $m_{0}$ such that for any integer $m\geq m_{0}$ there exists an
$(m,k,r)$-design satisfying the condition:
\begin{equation}\label{mod}
 m(k-1) \equiv 0 \hspace{1em} \mathrm{mod}~(r-1), \quad 
 m^{2}k(k-1) \equiv 0 \hspace{1em} \mathrm{mod}~~r(r-1).
 \end{equation}
\end{lem}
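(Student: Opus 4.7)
The plan is to treat Lemma~\ref{design} as an asymptotic existence statement for a specific class of group-divisible designs and outline a proof via Wilson's fundamental construction. First, I would verify that the divisibility conditions in (\ref{mod}) are \emph{necessary}. In any $(m,k,r)$-design $(V,X,\mathcal{B})$, property (e) implies that $|\mathcal{B}|=\binom{k}{2}m^{2}/\binom{r}{2}=m^{2}k(k-1)/(r(r-1))$, forcing $r(r-1)\mid m^{2}k(k-1)$; and double-counting incidences at a fixed point $v$ shows $v$ lies in exactly $m(k-1)/(r-1)$ blocks, forcing $r-1\mid m(k-1)$. This pins down the necessary arithmetic and matches (\ref{mod}) exactly.

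The crux is sufficiency, and the key reformulation I would make is that an $(m,k,r)$-design is exactly a $K_{r}$-decomposition of the complete $k$-partite graph $K_{m,m,\ldots,m}$ in which every copy of $K_{r}$ is a partial transversal meeting $r$ distinct parts; equivalently, a $\{r\}$-GDD of type $m^{k}$. The asymptotic existence of such GDDs, under the obvious divisibility conditions on $m$, is the content of Wilson's asymptotic theorem and of its quantitative GDD-version due to Moh\'acsy. I would assemble the design by Wilson's fundamental construction (WFC): take a ``master'' design on a small point set, inflate each point by an appropriate factor, and cover each inflated block using small ``ingredient'' GDDs built from transversal designs $\mathrm{TD}(r,q)$ and affine-geometry designs $\mathrm{AG}(r,q)$ over prime-power orders $q$, which supply the required cross-transversal $K_{r}$'s.

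To organize the recursion, I would split the admissible values of $m$ into residue classes modulo $\mathrm{lcm}(r-1,r(r-1))$ via the Chinese Remainder Theorem, so that (\ref{mod}) is preserved along each branch. Within each class, WFC combined with induction on $m$ produces an $(m,k,r)$-design for all $m$ above a class-dependent threshold; taking the maximum of these thresholds over the finitely many residues yields the single $m_{0}=m_{0}(k,r)$ asserted in the lemma.

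The hard part will be managing the recursion uniformly in $m$: one must verify that the required ingredient GDDs exist on every small type that appears after inflation, and patch finitely many ``base-case'' residues via ad hoc direct constructions (typically from finite-field coordinatizations or small-parameter classifications). Bridging these gaps is exactly where Moh\'acsy's refinement of Wilson's machinery does the heavy lifting; once those seed designs are in hand, iterating WFC closes the induction and delivers the claimed asymptotic existence.
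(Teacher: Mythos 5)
The paper does not prove Lemma~\ref{design}; it imports it as a black box from Moh\'acsy~\cite{Moh2011}, and the only thing the paper itself does is note the consequence that every point lies in exactly $m(k-1)/(r-1)$ blocks and that an $(m,k,r)$-design is a linear regular $r$-graph. So there is no ``paper's own proof'' to compare against. Your proposal correctly reinterprets the statement: an $(m,k,r)$-design is precisely a $\{r\}$-GDD of type $m^k$ (a $K_r$-decomposition of $K_{m,\ldots,m}$ by transversal cliques), your double-counting derivations of the necessary conditions $r(r-1)\mid m^2k(k-1)$ and $(r-1)\mid m(k-1)$ are correct, and Wilson's fundamental construction bootstrapped through Moh\'acsy's quantitative version is indeed the right machinery for asymptotic sufficiency. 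This is the correct approach and, to the extent it can be checked against the paper, it is consistent with how the paper treats the lemma.

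One caution, though: your proposal is a roadmap rather than a proof. The heavy lifting --- constructing the seed/ingredient GDDs uniformly, showing the admissible set of $m$ (which, because of the $m^2$ condition, is a finite union of residue classes modulo a suitable product of prime-power factors of $r(r-1)$ rather than a single congruence modulo $\mathrm{lcm}(r-1,r(r-1))=r(r-1)$) can be closed under the recursion, and handling the finitely many base cases --- is exactly what Moh\'acsy's paper does and what you explicitly delegate to it. Since the paper under review also takes this lemma on faith, delegating is acceptable here; just be aware that as a standalone argument this would not be considered complete, and that the parenthetical remark about organizing residues by $\mathrm{lcm}(r-1,r(r-1))$ slightly undersells the subtlety introduced by the $m^2$ divisibility condition.
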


Next, we are ready to prove Theorem \ref{ESSL}. 

\begin{proof}[{\bf Proof of Theorem \ref{ESSL}}] 
Let $F$ be a $2$-graph with chromatic number $\chi(F) =k+1$ and $k\ge r\ge 2$.  
 Let $m,k$ and $r$ be positive integers that satisfy (\ref{mod}) in Lemma \ref{design}.  Then for sufficiently large $m$,  there exists a linear $\frac{m(k-1)}{r-1}$-regular $r$-graph $H$ of order $mk$. 
 Since $\chi (F)=k+1$, by (d) in Definition \ref{defn-8-3}, 
 we know that $H$ does not contain a copy of $F^r$.   
 We denote $n :=km$. 
 It follows that $q(H)=\frac{2m(k-1)}{r-1}=\frac{2n}{r-1}(1-\frac{1}{k})$. Thus, for every sufficiently large $n$, we have 
\begin{equation*}
\mathrm{ex}_{q}^{lin}(n,F^{r}) \geq \frac{2n}{r-1}\bigg(1-\frac{1}{k}\bigg) + o(n).
\end{equation*} 
For the upper bound on $\mathrm{ex}_{q}^{lin}(n,F^{r})$, 
we will apply Theorem \ref{lhslES}. 
Since $\chi(F)=k+1$, there exists an integer $\ell$ such that $F$ is a subgraph of $K_{k+1}[\ell]$. 
By Theorem \ref{lhslES},
we have
$$\mathrm{ex}_{q}^{lin}(n,F^{r})\leq \mathrm{ex}_{q}^{lin}(n,(K_{k+1}[\ell])^{r})\leq \frac{2n}{r-1}\bigg(1-\frac{1}{k}+o(1)\bigg).$$
Hence, it follows that $\mathrm{ex}_{q}^{lin}(n,F^{r}) =  \frac{2n}{r-1}(1- \frac{1}{k})+ o(n)$, as needed.  
\end{proof}

\begin{theorem}\label{linear hypergraph}
Let $F$ be a color-critical graph with $\chi(F)=k+1$ and $F^{r}$ be the $r$-expansion of $F$, where $k\geq3$ and $r\geq2$. Then there exists $n_{0}$  such that for  any  $F^{r}$-free linear $r$-graph $G$ on $n\geq n_{0}$ vertices,
$$q(G)\leq \frac{2n}{r-1} \left( 1- \frac{1}{k}\right).$$
Moreover, the inequality can be achieved if $k\geq 2$, $n=mk$, where $m$, $k$ and $r$ satisfy (\ref{mod}).
\end{theorem}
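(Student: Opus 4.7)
The plan is to argue by contradiction through the $2$-shadow of $G$, combining Lemma \ref{gj} with the color-critical result in Theorem \ref{thm-h3} (equivalently Corollary \ref{color}), and then lifting the substructure found in $\partial G$ back to $G$ itself.

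Suppose $G$ is an $F^r$-free linear $r$-graph on $n$ vertices with $q(G) > \frac{2n}{r-1}(1-\frac{1}{k})$. Lemma \ref{gj} with $s=2$ yields
\[ q(\partial G) \ge (r-1)q(G) > 2n\bigl(1-\tfrac{1}{k}\bigr) > q(T_{n,k}), \]
the last inequality using the expansion $q(T_{n,k}) = 2n(1-\frac{1}{k}) - O(1/n)$. Because $G$ is linear, no two $r$-edges share more than one vertex, so $\partial G$ is a simple graph. Applying Theorem \ref{thm-h3} to $\partial G$ (the assumption $k\ge 3$ is crucial here), the strict inequality $q(\partial G) > q(T_{n,k})$ rules out $\partial G = T_{n,k}$, so $\partial G$ must contain a copy of $K_k^{+}[t]$ with $t = \Omega_k (\log n)$. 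Since $F$ is color-critical with $\chi(F)=k+1$, removing its critical edge gives a $k$-colorable graph in which the removed edge lies inside one color class, and therefore $F$ embeds into $K_k^{+}[\ell]$ for some constant $\ell=\ell(F)$.

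The remaining step is a return argument in the spirit of Lemma \ref{lem-return}. Each $2$-edge of the copy of $K_k^{+}[t]$ inside $\partial G$ is the shadow of a unique $r$-edge of $G$ by linearity, carrying $r-2$ auxiliary vertices. Because $t = \Omega_k(\log n)$ grows with $n$ while $|V(K_k^{+}[\ell])|$ and $|E(K_k^{+}[\ell])|$ are bounded by constants depending only on $F$, a greedy selection of $\ell$ vertices in each partite set of $K_k^{+}[t]$, together with the corresponding auxiliary $(r-2)$-tuples, can be made pairwise disjoint. The outcome is an $r$-expansion of $K_k^{+}[\ell]$, and hence of $F$, inside $G$. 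This contradicts the $F^r$-freeness of $G$ and proves $q(G) \le \frac{2n}{r-1}(1-\frac{1}{k})$.

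For the moreover part, I would recycle the construction from the proof of Theorem \ref{ESSL}: when $k\ge 2$ and $m,k,r$ satisfy (\ref{mod}), Lemma \ref{design} supplies an $(m,k,r)$-design, which is a linear $\frac{m(k-1)}{r-1}$-regular $r$-graph $H$ on $n=mk$ vertices whose $k$ groups form independent sets each meeting any block in at most one point; consequently $H$ is $F^r$-free for any $F$ with $\chi(F)=k+1$, and
\[ q(H) = \frac{2m(k-1)}{r-1} = \frac{2n}{r-1}\bigl(1-\tfrac{1}{k}\bigr), \]
achieving equality. The main technical obstacle I anticipate is the return step: formalizing the greedy selection of disjoint auxiliary vertices for each edge of $K_k^{+}[\ell]$; however, the comfortable logarithmic budget $t = \Omega_k(\log n) \gg r\ell^2$ and the linearity of $G$ make this routine once set up carefully.
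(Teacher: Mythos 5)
Your proof is correct and follows essentially the same route as the paper: contradiction, transfer to the $2$-shadow via Lemma \ref{gj}, extraction of a large $K_k^+[t]$ via Theorem \ref{thm-h3}, a lifting step modeled on Lemma \ref{lem-return}, and the design construction from Lemma \ref{design} for tightness. The paper likewise leaves the lifting step as "similar to Lemma \ref{lem-return}," so your acknowledgment that this is the main technical detail to be checked matches the level of rigor in the original.
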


\begin{proof}
Without loss of generality, we may assume that $F-e$ is a $k$-partite graph with all part sizes less than $\ell$.
 It follows that $K_{k}^{+}[\ell]$ contains a copy of $F$. Let $H$ be an $F^{r}$-free linear $r$-graph. 
 Suppose on the contrary that  $q(H)>\frac{2n}{r-1}(1- \frac{1}{k})$. Then Lemma  \ref{gj} yields 
$$q(\partial H)\geq(r-1)q(H)> \left(1- \frac{1}{k}\right)2n \geq q( T_{n,k}).$$
In view of Theorem \ref{thm-h3}, for sufficiently large $n$, the $2$-graph $\partial H$ contains a copy of $K_{k}^{+}[t]$ with $t=\Omega_k(\log n)$.
Similar to Lemma \ref{lem-return}, one can show that  $H$ contains an $r$-expansion of $K_{k}^{+}[\ell]$. This implies that $H$ contains an $r$-expansion of $F$, which is a contradiction.

Let $m,k$ and $r$ be positive integers that satisfy (\ref{mod}) in Lemma \ref{design}. We denote 
 $n=km$.  
Then for sufficiently large $n$,
 we know from Lemma \ref{design} that there exists an  $F^{r}$-free  linear $\frac{n(k-1)}{k(r-1)}$-regular $r$-graph $H$. Clearly, 
 we have $q(H)=\frac{2n(k-1)}{k(r-1)}$, which completes the proof.
\end{proof}

\section{Concluding remarks}

\label{sec-9}

In this paper, we have systematically investigated the spectral extremal graph problems for the signless Laplacian spectral radius. 
For example, we have studied the supersaturation and blowup phenomenon for cliques (Theorems \ref{thm-1-2} and \ref{lsES}), and the sufficient condition to guarantee a large copy of color-critical graphs, joints and books (Theorems \ref{thm-h3}, \ref{thm-q-js} and \ref{thm-genelized-books}). Moreover, we have proved the stability result for a general graph in terms of the signless Laplacian spectral radius (Theorem \ref{sst}). 
As an application, we can obtain the upper bound on the power of degrees of a graph that does not contain a general graph as a substructure (Theorem \ref{thm-2-14} 
and Corollary \ref{coro-1-7}).

We summarize the aforementioned results in the following table. 

\begin{table}[H]
\centering
\begin{tabular}{ccccccccccc}
\toprule
  & $e(G)$ & $\lambda (G)$ & $q(G)$ \\
\midrule
Tur\'{a}n theorem & \cite{Turan41} 
 & \cite{Gui1996,Niki2007laa2} &  \cite{HJZ2013}  \\
 Erd\H{o}s-Stone-Simonovits & \cite{ES1946,ES1966}  & \cite{Gui1996,ESB2009} & \cite{ZLS2025}  \\ 
 Supersaturation & \cite{ES1983-super}  
 &  \cite{BN2007jctb} &  Current paper \\
 Blowups & \cite{BE1973} & \cite{ESB2009} & Current paper \\ 
 Color-critical graphs & \cite{S1968}  & \cite{N2009} & Current paper \\ 
 Joints & \cite{Erd1969,BN2008} & \cite{N2009,LLZ2024-book-4-cycle} & Current paper \\  
 Generalized books & \cite{LLZ2024-book-4-cycle}  
 & \cite{LLZ2024-book-4-cycle} & Current paper \\  
  Stability result & \cite{Erd1966Sta1,Erd1966Sta2,S1968} &  \cite{Niki2009jgt} & Current paper \\ 
\bottomrule 
\end{tabular}
\caption{Some classical results and its spectral correspondence.}
 \label{tab-SSSR}
\end{table}

Our results generalized the classical extremal graph results in terms of the size or the adjacency spectral radius. 
In the end of this paper, we conclude some interesting problems for the readers.

\subsection{Counting color-critical graphs}

In 2023, Ning and Zhai \cite{NZ2021} proved that if $G$ is an $n$-vertex graph with $\lambda (G)\ge \lambda (T_{n,2})$, then $G$ contains at least $\lfloor \frac{n}{2}\rfloor -1$ triangles, unless $G=T_{n,2}$. 
In the concluding remarks of \cite{NZ2021}, Ning and Zhai wrote that
for the case of triangles, is there some interesting
phenomenon when we consider the relationship between the number of triangles
and signless Laplacian spectral radius, Laplacian spectral radius, distance spectral
radius and etc? 
As noted by Li, Lu and Peng \cite[Sec. 5]{LLP2024-AAM}, 
the counting result of triangles does not hold 
in terms of the signless Laplacian spectral radius. 
Indeed, let $K_{n-1,1}^+$ be the graph obtained by adding an edge to the independent set of the star $K_{n-1,1}$. 
We can see that $q(K_{n-1,1}^+) > q(K_{n-1,1}) =q(T_{n,2})$, but $K_{n-1,1}^+$ contains exactly one triangle.

It seems possible to establish the spectral counting result for cliques $K_{k+1}$ in the case $k \ge 3$. 
The classical supersaturation for cliques was  understood by the result of Erd\H{o}s \cite{Erd1969}. 
We refer the interested readers to \cite{LFP2024-triangular,LFP-count-bowtie,Mub2010,PY2017} for more related results. 

\begin{theorem}[Erd\H{o}s \cite{Erd1969}] \label{thm-count-cliques}
     Let $k\ge 2,t\ge 1$ be fixed integers and $n$ be sufficiently large. 
 If $G$ is an $n$-vertex graph with 
$e(G)\ge e(T_{n,k})+t $, then  
$G$ contains at least 
$t(\frac{n}{k})^{k-1} +O(n^{k-2})$ copies of $K_{k+1}$. 
\end{theorem}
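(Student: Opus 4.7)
My strategy is to combine the structural stability of $K_{k+1}$-extremal graphs with a direct counting argument on ``extra'' edges. Fix a $k$-partition $V(G) = V_1 \sqcup \cdots \sqcup V_k$ that maximizes the number of cross-part edges. Since the complete $k$-partite graph on $(V_1, \ldots, V_k)$ has at most $e(T_{n,k})$ edges (with equality only for balanced parts), the set $E_{\mathrm{in}}$ of within-part edges of $G$ satisfies $|E_{\mathrm{in}}| \geq t$. Moreover, by the optimality of the partition and a standard Zykov-type symmetrization, every vertex has at least as many neighbors in each other part as in its own part (up to an $O(1)$ error).

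The key step is to show that each within-part edge lies in roughly $(n/k)^{k-1}$ copies of $K_{k+1}$. For this, invoke the Erd\H{o}s--Simonovits stability (Lemma \ref{est}): since $e(G) = e(T_{n,k}) + t$ with $t$ constant, $G$ differs from a complete $k$-partite graph on $(V_1,\ldots,V_k)$ by $o(n^2)$ edges. A sharper cleaning step then yields that the partition is nearly balanced, i.e.\ $|V_j| = n/k + O(1)$, and that the bipartite graphs between parts are missing only $o(n)$ edges in total. Given $e = uv \in E_{\mathrm{in}}$ with $u,v \in V_i$, the copies of $K_{k+1}$ through $e$ correspond to transversal $(k-1)$-cliques in $N(u) \cap N(v) \setminus V_i$. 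The structural refinement implies $|N(u) \cap N(v) \cap V_j| = |V_j| - o(n)$ for each $j \neq i$, so by a short inclusion--exclusion the number of such transversal cliques is $\prod_{j \neq i} |V_j| - O(n^{k-2}) = (n/k)^{k-1} + O(n^{k-2})$.

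Summing over the at least $t$ edges in $E_{\mathrm{in}}$ and correcting for overcounting (a given $K_{k+1}$ contains at most $\binom{k+1}{2}$ within-part edges, but the number of $K_{k+1}$'s containing two or more such edges is absorbed into the $O(n^{k-2})$ error, since that forces two within-part edges to share a common neighborhood of size $O(n^{k-3})$), we obtain the desired lower bound of $t(n/k)^{k-1} + O(n^{k-2})$.

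\textbf{Main obstacle.} The principal difficulty is controlling the error term tightly enough. The Erd\H{o}s--Simonovits stability alone yields an edge-discrepancy error of $o(n^2)$, which translates naively into an error of $o(n^{k})$ in clique counts---too crude to match the main term $t(n/k)^{k-1}$. To bring the error down to $O(n^{k-2})$, one must strengthen stability by an iterative vertex-deletion bootstrap: whenever a vertex deviates significantly from the ``expected'' degree profile in $T_{n,k}$, remove it and apply induction on $n$, until the remaining graph has the clean form ``balanced $k$-partite plus $O(t)$ within-part edges plus at most $o(n)$ missing cross edges''. Only once this refined structure is in hand does the factor $(n/k)^{k-1}$ per extra edge appear transparently, and the overcounting corrections become genuinely lower order.
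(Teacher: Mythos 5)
The paper does not actually prove this statement; it is quoted as a classical theorem of Erd\H{o}s and cited directly from \cite{Erd1969}, so there is no in-paper proof to compare against. Judged on its own merits, your proposal has two genuine gaps, and I believe the second one is fatal to the approach you chose.

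First, you invoke Erd\H{o}s--Simonovits stability (Lemma \ref{est}) for the graph $G$, but that lemma applies only to $F$-free graphs. By Tur\'an's theorem, a graph with $e(G) > e(T_{n,k})$ necessarily contains $K_{k+1}$, so the hypothesis of the stability lemma never holds here. The standard repair is a dichotomy: if $G$ has, say, at least $n^{k-1/2}$ copies of $K_{k+1}$, you are already done; otherwise $G$ has $o(n^{k+1})$ copies, the graph removal lemma lets you delete $o(n^2)$ edges to make $G$ $K_{k+1}$-free, and only then can stability be applied. Your write-up skips this entirely and applies stability to $G$ as if it were already $F$-free.

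Second, and more seriously, even after the above repair the structural conclusion is only ``$G$ agrees with some complete $k$-partite graph up to $o(n^2)$ edges,'' and you yourself observe that this produces an $o(n^k)$ error in the $K_{k+1}$ count, which swamps the main term $t(n/k)^{k-1}$ and is nowhere near the $O(n^{k-2})$ precision claimed. Your proposed fix --- an ``iterative vertex-deletion bootstrap'' that upgrades the partition to ``balanced plus $O(t)$ within-part edges plus $o(n)$ missing cross edges'' --- is not carried out and is not a small technical step: going from a $o(n^2)$-level stability statement to a statement with $O(1)$ precision in the part sizes and $o(n)$ precision in the bipartite density is essentially the entire content of the theorem. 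In particular, the $o(n^2)$ missing cross-edges could in principle concentrate on the endpoints $u,v$ of a within-part edge, destroying the estimate $|N(u)\cap N(v)\cap V_j| = |V_j| - o(n)$; nothing in your argument rules this out. The classical proof of this bound (Erd\H{o}s, and the later sharpenings in the Lov\'asz--Simonovits line) proceeds by a direct symmetrization / max-cut counting argument and never passes through an $o(\cdot)$-level stability theorem, precisely because stability cannot deliver the polynomially sharp error term. As written, your proposal identifies the correct intuition (each extra within-part edge should contribute $\approx (n/k)^{k-1}$ cliques) but the machinery you invoke is not strong enough to justify it.
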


Motivated by Theorem \ref{thm-count-cliques}, 
 we propose the following conjecture.

\begin{conj} \label{conj-1}
    Let $k\ge 3$ be fixed and $n$ be sufficiently large. 
    If $G$ is an $n$-vertex graph with 
    \[ q(G)> \left(1- \frac{1}{k} \right)2n, \]
    then $G$ contains at least 
    $(\frac{n}{k})^{k-1} + O(n^{k-2})$ copies of $K_{k+1}$. 
\end{conj}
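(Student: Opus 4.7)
\textbf{Proof plan for Conjecture \ref{conj-1}.}
My plan combines the classical edge-count supersaturation of Erd\H{o}s (Theorem \ref{thm-count-cliques}) with a stability-type analysis for the signless Laplacian spectral radius. The starting point is the inequality $q(G)\ge 4e(G)/n$, which under our hypothesis yields $e(G)> (1-\frac{1}{k})\frac{n^2}{2}$. Since $e(T_{n,k}) = (1-\frac{1}{k})\frac{n^2}{2} - O(1)$, we get $e(G)\ge e(T_{n,k})-O(1)$. In the easy case $e(G)\ge e(T_{n,k})+1$, Theorem \ref{thm-count-cliques} with $t=1$ immediately yields $(\frac{n}{k})^{k-1}+O(n^{k-2})$ copies of $K_{k+1}$, and we are done.

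The main case is when $e(G)\le e(T_{n,k})$ yet $q(G)> q(T_{n,k})$. The strict inequality $q(G)>4e(G)/n$ forces $G$ to be irregular, and by (\ref{eq-q-Turan}) the graph $G$ must still contain at least one copy of $K_{k+1}$. I would first establish a structural stability statement asserting that $V(G)$ admits a partition $V_1\cup\cdots\cup V_k$ with $|V_i|=n/k+o(n)$ such that $G$ differs from the Tur\'{a}n graph on this partition by $o(n^2)$ edges. A natural route is to reduce to the $F$-free setting of Theorem \ref{sst} by carefully removing one edge from each ``unwanted'' copy of $K_{k+1}$ that would spoil freeness; an alternative is a direct eigenvector-comparison argument between $Q(G)$ and $Q(T_{n,k})$ in the spirit of \cite{ZL2022jgt}.

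Given such a partition, let $E^+$ be the set of edges of $G$ inside some $V_i$ and $E^-$ the set of missing cross-edges between distinct parts. Every $uv\in E^+$ with both endpoints in $V_i$ contributes $(\frac{n}{k})^{k-1}-O(n^{k-2})$ copies of $K_{k+1}$ through transversals of $V\setminus V_i$, provided these transversals are not destroyed by cross-edges from $E^-$ lying in the common neighborhood of $uv$. Since $q(G)>q(T_{n,k})$ implies $G$ is not a subgraph of the Tur\'{a}n graph on its partition, we have $|E^+|\ge 1$, which in an ideal situation already suffices to reach the desired count.

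The principal obstacle is to control the interaction between $E^+$ and $E^-$: a priori the $o(n^2)$ missing cross-edges could concentrate in the common neighborhood of the one guaranteed edge $uv\in E^+$ and destroy a macroscopic fraction of the expected $K_{k+1}$-transversals. To overcome this I would employ a Kelmans-type edge-shift operation guided by the Perron eigenvector of $Q(G)$: such an operation can be designed to move missing cross-edges away from the ``heavy'' vertices while maintaining the spectral radius and not increasing the $K_{k+1}$-count beyond that of $G$, thereby reducing the counting problem to a near-Tur\'{a}n graph with a very clean edge distribution. Carrying out this last step rigorously, and in particular verifying that the shifts do not create phantom cliques, is likely to require genuinely new ideas beyond the reduction framework (Theorems \ref{h1} and \ref{h1-two-cases}) developed in the present paper.
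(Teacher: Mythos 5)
The statement you are attempting is Conjecture \ref{conj-1}; the paper poses it as an open problem in the concluding remarks and does \emph{not} supply a proof, so there is no reference argument to compare against. That said, your plan has a concrete gap already in its first step, and it is worth flagging precisely because it would, if correct, trivialize the conjecture.

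You write that ``$q(G)\ge 4e(G)/n$, which under our hypothesis yields $e(G)>(1-\frac{1}{k})\frac{n^2}{2}$.'' This reverses the direction of the inequality. The relation $q(G)\ge 4e(G)/n$ is a \emph{lower} bound on $q(G)$ in terms of $e(G)$; combined with the hypothesis $q(G)>(1-\frac1k)2n$ it yields no information about $e(G)$ at all. To conclude $e(G)>(1-\frac1k)\frac{n^2}{2}$ from the spectral hypothesis you would need an \emph{upper} bound of the form $q(G)\le 4e(G)/n$, which is false in general (e.g., $q(K_{1,n-1})=n$ while $4e/n\to 4$). The paper itself emphasizes this point in the remark after Theorem \ref{lsES}: a large $q$-index does not by itself force positive edge density. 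If your first step were valid the entire conjecture would follow at once from Erd\H{o}s's Theorem \ref{thm-count-cliques} with $t=1$, and there would be nothing left to conjecture. For $k\ge 3$ one \emph{can} extract a lower bound on $e(G)$, but only via a genuine upper bound such as $q(G)\le \frac{2m}{n-1}+n-2$ for connected graphs, which gives roughly $m\gtrsim (1-\frac2k)\frac{n^2}{2}$; this falls strictly short of $e(T_{n,k})$, so your ``easy case'' $e(G)\ge e(T_{n,k})+1$ is not automatically reached and the ``main case'' is the whole problem.

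The remainder of the plan is an honest sketch rather than a proof: you acknowledge that the stability reduction, the control of how $E^-$ interacts with the common neighborhood of the edge in $E^+$, and the proposed Kelmans-type shift ``require genuinely new ideas.'' Those are indeed the substantive obstructions, and they remain open; the stability theorem of the paper (Theorem \ref{sst}) applies to $F$-free graphs and does not directly port to a graph that is merely $q$-near-extremal without a forbidden-subgraph hypothesis. Before developing those ideas further, correct the initial edge-count reduction, since as written it rests on an inequality used in the wrong direction.
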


We note that the supersaturation in Conjecture \ref{conj-1} differs from that in Theorem \ref{thm-1-2}, which says that every graph $G$ with $q(G)> (1- \frac{1}{k})2n + \varepsilon n$ contains $\Omega_{k,\varepsilon} (n^{k+1})$ copies of $K_{k+1}$. 

Let $F$ be a color-critical graph with $\chi (F)=k+1\ge 4$. Theorem \ref{thm-h3} implies that if $n$ is sufficiently large and $q(G)> (1- \frac{1}{k})2n$, then $G$ contains a copy of $F$. 
More generally, it is interesting to investigate the spectral result for counting the color-critical graph $F$, rather than the clique $K_{k+1}$.

\subsection{Connections between the spectral radius and the size}

Under what conditions does an edge-extremal problem admit a spectral counterpart, and more specifically, when do both versions share identical extremal graphs? 
For example, when we forbid a clique \cite{Niki2007laa2} or a color-critical graph \cite{N2009}, it is known that the spectral extremal graphs are the same as the classical edge-extremal graphs. 
Confirming the Cioab\u{a}--Desai--Tait conjecture \cite{CDT21}, 
Wang, Kang and Xue \cite{Wang2022} proved the following interesting result, which says roughly that the spectral extremal graph of $F$ must be an edge-extremal graph when $F$ is `almost' color-critical. 
Recall that $\mathrm{Ex}(n,F)$ denotes the set of all $n$-vertex $F$-free graphs
with maximum number of edges.  

\begin{theorem}[Wang--Kang--Xue \cite{Wang2022}] \label{thm-WKX}
Let $k\ge 2$ be an integer and $F$ be a graph with $\mathrm{ex}(n,F)= e(T_{n,k}) + O(1)$. For sufficiently large $n$, if $G$ is an $n$-vertex graph with the maximal adjacency spectral radius
over all $n$-vertex $F$-free graphs, then
$ G\in \mathrm{Ex}(n,F)$. 
\end{theorem}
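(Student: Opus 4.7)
\medskip
\noindent
\textbf{Proof plan.}
The plan is to reduce the problem to the Erd\H{o}s--Simonovits edge-stability and the Nikiforov spectral stability, and then sharpen both from an $o(n^2)$-closeness statement to an $O(1)$-closeness statement by exploiting the maximality of $\lambda(G^*)$. Throughout, let $G^{*}$ be an $n$-vertex $F$-free graph attaining $\lambda (G^*)=\mathrm{ex}_\lambda (n,F)$, let $\mathbf{x}$ be a unit Perron eigenvector of $A(G^*)$, and let $\lambda := \lambda (G^*)$. Our goal is to prove $e(G^{*})\ge \mathrm{ex}(n,F)$. The hypothesis $\mathrm{ex}(n,F)=e(T_{n,k})+O(1)$ together with Theorem~\ref{eq-ESS} forces $\chi(F)=k+1$, so $T_{n,k}$ is $F$-free and consequently $\lambda \ge \lambda (T_{n,k}) \ge (1-\tfrac{1}{k})n-O(1/n)$.

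First, I would invoke the Nikiforov spectral stability theorem (Theorem~\ref{thm-Nik-stability}) to obtain, for every $\varepsilon >0$ and $n$ large, a partition $V(G^*)=V_{1}\cup \cdots \cup V_{k}$ with $||V_{i}|-n/k|\le \varepsilon n$ and total edit distance $\le \varepsilon n^{2}$ to $T_{n,k}$. The next step is an eigenvector-uniformity argument. Using the eigenequation $\lambda x_v=\sum_{u\sim v} x_u$, the Rayleigh bound $\lambda \ge (1-\tfrac{1}{k})n-O(1/n)$, and a min-entry estimate analogous to Lemma~\ref{min}, I would show that $x_v = (1+o(1))/\sqrt{n}$ for \emph{most} vertices $v$, and that any vertex with $x_v$ noticeably smaller must have abnormally small degree or abnormally many neighbors inside its own part $V_i$.

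The central step is to upgrade this to a bounded-defect statement: there exists a constant $C=C(F,k)$ and a refined $k$-partition of $V(G^*)$ such that the symmetric difference $E(G^*)\triangle E(T_{n,k})$ (with respect to this partition) has size at most $C$. This is carried out by a removal-style iteration: as long as some vertex $v$ has $\Omega(n)$ ``missing'' cross-edges or $\Omega(n)$ ``extra'' same-part edges, one can rewire $v$ into the Turán structure (delete its old edges and join it to all other parts) and strictly increase $\lambda$ via the comparison $\lambda(G') - \lambda(G^{*})\ge \tfrac{2}{\|\mathbf{x}\|^{2}}\bigl(\sum_{u \in \mathrm{new}} x_u x_v -\sum_{u\in \mathrm{old}}x_u x_v\bigr)$; checking $F$-freeness of $G'$ uses that $G'$ is a subgraph of $T_{n,k}$ plus a bounded number of extra edges placed on low-$\mathbf{x}$ vertices, together with the supersaturation/blowup result (Theorem~\ref{sES}) to rule out large monochromatic structures that would embed $F$. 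Maximality of $\lambda$ prevents such a $v$ from existing, so the total discrepancy is $O(1)$.

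Finally, once we know $G^{*}$ arises from $T_{n,k}$ by adding and removing at most $C$ edges, suppose for contradiction that $e(G^{*})<\mathrm{ex}(n,F)$. Because $\mathrm{ex}(n,F)=e(T_{n,k})+O(1)$, there is some edge-extremal $F$-free graph $H$ with $H=T_{n,k}+S$ for a fixed set $S$ of $O(1)$ additional edges inside parts. Comparing $G^{*}$ and $H$ vertex by vertex (using that both agree with $T_{n,k}$ off a bounded set) I would locate a pair $\{u,w\}\notin E(G^*)$ whose addition still avoids $F$ (copying the relevant added edge from $H$ and, if necessary, performing a bounded Zykov-type symmetrization to align the exceptional vertices); since $x_u, x_w>0$ we gain $\lambda(G^{*}+uw)>\lambda(G^{*})$, contradicting the choice of $G^{*}$. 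Hence $e(G^{*})\ge \mathrm{ex}(n,F)$ and $G^{*}\in \mathrm{Ex}(n,F)$.

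The main obstacle is the middle step, namely upgrading the $o(n^{2})$ edit-distance bound from spectral stability to an $O(1)$ bound. This requires showing that the Perron eigenvector is essentially flat, that exceptional vertices can always be locally rewired without creating a copy of $F$, and that each rewiring strictly increases $\lambda$; all three pieces must interact cleanly, and the $F$-freeness verification after rewiring is the most delicate part, relying on the fact that any near-Turán configuration with $\omega(1)$ defect edges would, by the blowup theorem, already contain $F$ whenever $F\subseteq K_{k+1}[t]$.
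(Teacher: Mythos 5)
This theorem is cited from Wang, Kang, and Xue \cite{Wang2022}; the paper does not reprove it, so there is no in-paper argument to compare against, and your sketch must be judged as a self-contained re-derivation.

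The high-level shape (spectral stability, then eigenvector flatness, then bounded defect, then compare with an edge-extremal graph) is reasonable, but the central upgrade from $o(n^2)$ edit distance to $O(1)$ edit distance has a concrete gap. Your iterative-rewiring step claims that a vertex $v$ with $\Omega(n)$ defects can be rewired into the Tur\'{a}n structure, strictly increasing $\lambda$ while preserving $F$-freeness, and that iterating this bounds the total defect by a constant. The $F$-freeness check, however, invokes the blowup theorem (Theorem~\ref{sES}), whose hypothesis is $\lambda(G) \ge (1 - 1/k + \varepsilon)n$ for a \emph{fixed} $\varepsilon > 0$; for a spectral extremal graph we only know $\lambda(G^{*}) = (1 - 1/k)n + O(1/n)$, so that hypothesis is never met for any fixed $\varepsilon$ and the theorem gives you nothing. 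Moreover, mid-iteration the graph is not yet ``$T_{n,k}$ plus a bounded number of extra edges'': the remaining vertices may still carry $\omega(1)$ defects, so joining $v$ to all other parts could complete a copy of $F$ using those residual defect edges, and nothing in your sketch rules this out. The hypothesis $\mathrm{ex}(n,F) = e(T_{n,k}) + O(1)$ only caps the edge count; it does not by itself preclude $F$-free graphs that are $\omega(1)$-far from $T_{n,k}$ in edit distance (one can trade a slowly growing number of cross-edges for internal edges without creating $F$). What is actually needed, and what \cite{Wang2022} develop, is explicit control over which subgraphs can be placed inside a part of $T_{n,k}$ without creating $F$ (the decomposition family of $F$), combined with a degree-regularization argument bounding the exceptional vertices and the number of stray edges each may carry. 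Until that structural bookkeeping is carried out, the $O(1)$ bound, and hence your final ``add an edge to $G^{*}$'' contradiction, remains unsupported.
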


{Byrne \cite{Byr2026} determined the optimal value $c_1(k)$ for all $k\ge 3$, such that if $n$ is sufficiently large and $\mathcal{F}$ is a graph family satisfying $\mathrm{ex}(n,\mathcal{F}) \le e(T_{n,k}) + Qn $, where $Q < c_1(k)$, then Theorem \ref{thm-WKX} remains valid. 
 We refer to \cite{FLZ2025,FTZ2024} for further connections between the spectral Tur\'{a}n problem and the edge Tur\'{a}n problem.}    
 The proofs of these results are based on the spectral stability Theorem \ref{thm-Nik-stability}. 
 In this paper, we have established the spectral stability for the signless Laplacian spectral radius; see Theorem \ref{sst}. 
{Inspired by these results, we propose the following problem for readers.

\begin{problem} \label{conj-9-4}
   Characterize all graphs $F$ such that for sufficiently large $n$, 
    if $G$ is an $n$-vertex graph that achieves the maximal $q(G)$ among all $n$-vertex ${F}$-free graphs, then 
    $G\in \mathrm{Ex}(n,{F})$. 
\end{problem}
 } 

Problem \ref{conj-9-4} was solved for cliques \cite{HJZ2013} and color-critical graphs  \cite{ZLL2025}. {A natural step toward Problem \ref{conj-9-4} is to extend the recent results \cite{Wang2022,FLZ2025,Byr2026} to the $Q$-spectral setting.} 
In addition, it is feasible to apply our stability result to determine the $Q$-spectral extremal graphs for some classical non-bipartite graphs, such as, the intersecting cliques, disjoint cliques, and the edge blow-up of graphs.

\section*{Acknowledgements}
The authors are grateful to the anonymous reviewers for their insightful feedback on this work.
We would also like to thank Dr. Xiaocong He for reading an earlier draft of this paper. 
Jian Zheng was partially supported by the National Natural Science Foundation of China (No. 12161047) and Jiangxi Provincial Natural Science foundation (No. 20224BCD41001). 
Yongtao Li (Corresponding author) was supported by the Postdoctoral Fellowship Program of CPSF (No. GZC20233196). Yi-Zheng Fan was 
supported by the National Natural Science Foundation of China (No. 12331012).

 %Last but not least, the authors thank the anonymous referees for their careful review and for several valuable suggestions that improved the presentation of our manuscript.

\section*{Declaration of competing interest}
The authors declare that they have no conflicts of interest to this work.

\section*{Data availability}
No data was used for the research described in the article.

\iffalse 
\subsection*{Acknowledgement}  
H. Li and L. Su are supported by National Natural Science Foundation of China (Nos.  12161047,  12061038) and  Jiangxi Provincial Natural Science foundation (No. 20224BCD41001).
\fi

\appendix

\section{Proof of Theorem \ref{h1-two-cases}} 
\label{sec-A}

\begin{proof}[{\bf Proof of Theorem \ref{h1-two-cases}}]
 Let $n$ be sufficiently large.
We define a sequence of graphs $G_{0},G_1,\ldots, G_{k}$ by the following procedure Algorithm \ref{alg:graph_pruning}. Here, the graph $G_i$ has $n-i$ vertices. 

\begin{algorithm}[H] 
\caption{Graph Reduction Algorithm}
\label{alg:graph_pruning}
\begin{algorithmic}[1]
    \STATE Set $G_0 = G$;
    \STATE Set $i = 0$;
    \WHILE{$\bm{\delta(G_i) \leq (\gamma - \alpha)(n - i)}$ \textbf{and} $\bm{i < \lfloor \beta n \rfloor}$} 
        \STATE Select a nonnegative unit eigenvector $(x_1, \ldots, x_{n-i})$ corresponding to $q(G_i)$;
        \STATE Select a vertex $u_i \in V(G_i)$ such that $x_{u_i} = \min\{x_1, \ldots, x_{n-i}\}$;
        \STATE Set $G_{i+1} = G_i - u_i$;
        \STATE Add $1$ to $i$;
    \ENDWHILE 
\end{algorithmic}
\end{algorithm} 
Note that $|G_k|=n-k\geq n- \lfloor\beta n\rfloor\geq(1-\beta)n$. 
We will show that
\begin{equation}\label{e1}
q(G_k)>2\gamma\bigg(1+\frac{k\alpha}{7n}\bigg)|G_k|.
\end{equation} 
To do so, we will prove by induction that 
for every $i=0,1, \ldots,k$, 
\begin{equation}\label{e2}
\frac{q(G_{i})}{n-i}\geq\bigg(1+\frac{i\alpha}{6n}\bigg)\frac{q(G)}{n}.
\end{equation}
The assertion is trivial for $i=0$. Let $0\leq i\leq k-1$ and assume that (\ref{e2}) holds for $i$. Now we consider the graph $G_{i+1}$.  
We denote $\delta=\delta(G_{i})$ and $q=q(G_{i})$. Note  that 
\begin{equation}\label{e3}
\delta\leq (\gamma-\alpha)(n-i),
\end{equation}
and
\begin{equation}\label{e4}
q\geq (n-i)\bigg(1+\frac{i\alpha}{6n}\bigg)\frac{q(G)}{n}\geq (n-i)
\left(2\gamma- \frac{s}{n^{2}}\right)\geq 
\frac{5(n-i)}{4},
\end{equation} 
where the second (resp. third) inequality follows from $q(G)>2\gamma n- s/n$ (resp. $\gamma\geq31/48)$. 
 %Let $(x_{1},\ldots,x_{n-i})$ be a nonnegative unit eigenvector corresponding to $q(G_i)$, and let
 %$u\in V(G_{i})$  satisfy $x_{u}=\min\{x_{1},\ldots,x_{n-i}\}$.

 Recall that $x_{u_i}$ is the minimum entry of $(x_1,\ldots ,x_{n-i})$ of $q(G_i)$.  
 It follows by Lemma \ref{min} that
 $$x_{u_{i}}^{2}\leq \frac{\delta}{q^{2}-2q\delta+(n-i)\delta}=\frac{\delta}{(q-\delta)^{2}-\delta^{2}+(n-i)\delta}.$$
Note that the right-hand side increases with $\delta$ and decreases with $q$ on $[\delta,+\infty)$.
In view of (\ref{e3}) and (\ref{e4}), we get that
\begin{displaymath}
\begin{split}
x_{{u}_{i}}^{2}(n-i)
&\leq \frac{(\gamma-\alpha)(n-i)^{2}}{(\gamma+\alpha-s/n^{2})^{2}(n-i)^{2}
-(\gamma-\alpha)^{2}(n-i)^{2}+(\gamma-\alpha)(n-i)^{2}}\\
&\leq \frac{(\gamma-\alpha)}{(\gamma+\alpha)^{2}-(\gamma-\alpha)^{2}+\gamma-\alpha
-2s(\gamma+\alpha)/n^{2}}\\
&\leq \frac{\gamma -\alpha}{\gamma-\alpha
+3\gamma \alpha}
= 1- \frac{3\gamma \alpha}{\gamma-\alpha
+3\gamma \alpha}\\
&\leq 1- \frac{3\gamma \alpha}{\gamma
+3\gamma \alpha}\\
&\leq  1-\alpha,
\end{split}
\end{displaymath}
where the last inequality follows from $0<\alpha\leq2/3$. 
Recall that $G_i$ has $n-i$ vertices. 
Combining this with  (\ref{e4}) and Lemma \ref{sdf}, we obtain that   
\begin{align*} \notag
\frac{q(G_{i+1})}{n-i-2}&\ge \frac{q(G_i)}{n-i-1}\bigg(1+\frac{1-(n-i)x_u^{2}}{(n-i-2)(1-x_u^{2})}\bigg)-\frac{1-(n-i)x_u^{2}}{(n-i-2)(1-x_u^{2})}\\ 
\notag 
&\geq \frac{q(G_i)}{n-i-1}\bigg(1+\frac{1-(n-i)x_u^{2}}{5(n-i-2)(1-x_u^{2})}\bigg)+\frac{5}{4}\cdot \frac{4(1-(n-i)x_u^{2})}{5(n-i-2)(1-x_u^{2})}-\frac{1-(n-i)x_u^{2}}{(n-i-2)(1-x_u^{2})}\\ 
\notag 
&\geq \frac{q(G_i)}{n-i-1}\bigg(1+\frac{1-(n-i)x_u^{2}}{5n}\bigg).
\end{align*} 
Then 
\begin{displaymath}
\begin{split}
\frac{q(G_{i+1})}{n-i-1}&\geq\frac{q(G_{i})}{n-i}\bigg(1+\frac{1}{n-i-1}\bigg)\bigg(1-\frac{1}{n-i-1}\bigg)\bigg(1+\frac{1-(n-i)x_{u}^{2}}{5n}\bigg)\\
&\geq \frac{q(G)}{n}\bigg(1+\frac{i \alpha}{6n}\bigg)\bigg(1-\frac{1}{(n-i-1)^{2}}\bigg)\bigg(1+\frac{\alpha}{5n}\bigg)\\
&\geq \frac{q(G)}{n}\bigg(1+\frac{i \alpha}{6n}\bigg)\bigg(1+\frac{\alpha}{6n}\bigg)\\
&\geq \frac{q(G)}{n}\bigg(1+\frac{(i+1)\alpha}{6n}\bigg),
 \end{split}
\end{displaymath} 
which completes the induction step and the proof of (\ref{e2}).

 By inequality (\ref{e2}) and $q(G)> 2\gamma n - s/n$, we have
 $$\frac{q(G_{k})}{n-k}\geq   \bigg(1+\frac{k\alpha}{6n}\bigg) \frac{q(G)}{n} >  
 \bigg(1+\frac{k\alpha}{6n}\bigg)\bigg(2\gamma-\frac{s}{n^{2}}\bigg)
 >2\gamma\bigg(1+\frac{k\alpha}{7n}\bigg),$$
which completes the proof of (\ref{e1}).

By the construction of the sequence of graphs in Algorithm \ref{alg:graph_pruning}, 
we know that when the procedure $\mathcal{P}$ stops at some $G_k$, 
we have either $k=\lfloor\beta n\rfloor$ 
or $\delta(G_k)> (\gamma-\alpha)|G_k|$. We denote $H:=G_k$. 
Next, we show that $H$ is the desired graph satisfying the 
statement (i) or (ii). 

If $k=\lfloor\beta n\rfloor$, then applying (\ref{e1}) gives 
$$q(H)>2\gamma\bigg(1+\frac{\lfloor\beta n\rfloor\alpha}{7n}\bigg)|H|>2\gamma\bigg(1+\frac{\alpha\beta}{8}\bigg)|H|.$$
Therefore, $H$ satisfies the statement (i).

If $k<\lfloor\beta n\rfloor$, then $\delta(H)> (\gamma-\alpha)|H|$. In view of (\ref{e1}), we find that
$$q(H)>2\gamma\bigg(1+\frac{k\alpha}{7n}\bigg)|H|>2\gamma |H|.$$
Hence, the statement (ii) holds.
\end{proof}

\end{document}